\numberwithin{equation}{section}
\def\@tocline#1#2#3#4#5#6#7{\relax
  \ifnum #1>\c@tocdepth 
  \else
    \par \addpenalty\@secpenalty\addvspace{#2}%
    \begingroup \hyphenpenalty\@M
    \@ifempty{#4}{%
      \@tempdima\csname r@tocindent\number#1\endcsname\relax
    }{%
      \@tempdima#4\relax
    }%
    \parindent\z@ \leftskip#3\relax \advance\leftskip\@tempdima\relax
    \rightskip\@pnumwidth plus4em \parfillskip-\@pnumwidth
    #5\leavevmode\hskip-\@tempdima
      \ifcase #1
       \or\or \hskip 1em \or \hskip 2em \else \hskip 3em \fi%
      #6\nobreak\relax
      \dotfill
      \hbox to\@pnumwidth{\@tocpagenum{#7}}
    \par
    \nobreak
    \endgroup
  \fi}
\newtheorem{theorem}{Theorem}[section]
\newtheorem{lemma}[theorem]{Lemma}
\newtheorem{proposition}[theorem]{Proposition}
\theoremstyle{definition}
\theoremstyle{remark}
\newcommand\R{{\ensuremath {\mathbb R} }}
\newcommand\1{{\ensuremath {\mathds 1} }}
\renewcommand\phi{\varphi}
\newcommand{\cC}{\mathcal{C}}
\newcommand{\cE}{\mathcal{E}}
\newcommand{\bx}{\mathbf{x}}
\renewcommand{\epsilon}{\varepsilon}
\newcommand{\norm}[1]{ \left| \! \left| #1 \right| \! \right| }
\renewcommand{\ge}{\geqslant}
\renewcommand{\le}{\leqslant}
\renewcommand{\geq}{\geqslant}
\renewcommand{\leq}{\leqslant}
\newcommand{\eps}{\varepsilon}
\newcommand{\bra}[1]{\langle #1|}
\newcommand{\ket}[1]{|#1\rangle}
\newcommand{\VDW}{V_{\rm DW}}
\newcommand{\cEDW}{\cE_{\rm DW}}
\newcommand{\EDW}{E_{\rm DW}}
\newcommand{\hDW}{h_{\rm DW}}
\newcommand{\muex}{\mu_{\rm ex}}
\numberwithin{equation}{section}
\begin{document}

\title{The Hartree functional in a double well}

\author[A. Olgiati]{Alessandro Olgiati}
\address{Universit\'e Grenoble-Alpes \& CNRS,  LPMMC, F-38000 Grenoble, France}
\email{alessandro.olgiati@lpmmc.cnrs.fr}

\author[N. Rougerie]{Nicolas Rougerie}
\address{Universit\'e Grenoble-Alpes \& CNRS,  LPMMC, F-38000 Grenoble, France}
\email{nicolas.rougerie@lpmmc.cnrs.fr}

\date{December, 2020}

\begin{abstract}
We consider a non-linear Hartree energy for bosonic particles in a symmetric double-well potential. In the limit where the wells are far apart and the potential barrier is high, we prove that the ground state and first excited state are given to leading order by an even, respectively odd, superposition of ground states in single wells. The corresponding energies are separated by a small tunneling term that we evaluate precisely.
\end{abstract}

\maketitle

\tableofcontents

\date{April, 2020}

\maketitle

\tableofcontents

\section{Introduction}

Both as a non-linear analysis problem in its own right, and as a basic input to a companion paper~\cite{OlgRouSpe-20}, we are interested in the low energy states of the bosonic Hartree energy functional
\begin{equation}\label{eq:Hartree func}
\cEDW [u]=\int_{\mathbb{R}^d} |\nabla u(x)|^2\,dx+\int_{\mathbb{R}^d} V_{\mathrm{DW}}(x)|u(x)|^2\,dx+\frac{\lambda}{2}\iint_{\mathbb{R}^d\times\mathbb{R}^d} |u(x)|^2 w(x-y)|u(y)|^2\,dxdy,
\end{equation}
with $\lambda,w\geq 0$ a coupling constant and a repulsive pair interaction potential. The crucial feature we tackle is that we take $\VDW$ to be a double-well potential defined as ($\ell$ and $r$ stand for left and right)
\begin{equation}\label{eq:double pot}
V_{\mathrm{DW}}(x)=\min\left\{V_\ell(x),V_r(x)\right\},
\end{equation}
where for some $s\ge2$
\begin{equation}\label{eq:leftrightpot}
V_\ell(x)=\left|x+\bx\right|^s \quad\text{and} \quad V_r(x)=\left|x-\bx\right|^s.
\end{equation}
Here $\bx \in \R^d$ is of the form 
\begin{equation}\label{eq:bx}
\bx =  \left(\frac{L}{2},0,\ldots,0\right)
\end{equation}
for a large\footnote{Chosen depending on a particle number $N$ in~\cite{OlgRouSpe-20}.} parameter $L\to +\infty$. Hence $\VDW$ models a potential landscape with two wells, both the distance and the energy barrier between them being large, and becoming infinitely so in the limit. 

In~\cite{RouSpe-16,OlgRouSpe-20} we are primarily concerned with the mean-field limit of the many-boson problem in such a double-well potential. As input to the second paper~\cite{OlgRouSpe-20} we use crucially several properties of the ground state problem 
\begin{equation}\label{eq:Hartree ener}
\EDW =\inf\left\{\mathcal{E}_\mathrm{DW}[u]\;|\;u\in H^1(\mathbb{R}^d)\cap L^2\big(\mathbb{R}^d,V_{\mathrm{DW}}(x)\,dx\big),\int_{\mathbb{R}^d} |u(x)|^2\,dx=1\right\}
\end{equation}
and of the associated low energy states. Namely, let $u_+$ be the (unique modulo a constant phase, fixed so as to have $u_+ >0$) minimizer for~\eqref{eq:Hartree ener} and 
\begin{equation}\label{eq:Hartree hamil}
\hDW := - \Delta + \VDW + \lambda w * |u_+|^2 
\end{equation}
the associated mean-field Hamiltonian (functional derivative of $\cEDW$ at $u_+$). One easily shows that $\hDW$ has compact resolvent, and we study its eigenvalues and eigenfunctions. 

The Euler-Lagrange variational equation for $u_+$ reads 
$$
\hDW u_+ = \mu_+ u_+
$$
with 
\begin{equation}\label{eq:muplus}
\mu_+ = \EDW + \frac{\lambda}{2} \iint_{\mathbb{R}^d\times\mathbb{R}^d} |u_+(x)|^2 w(x-y)|u_+(y)|^2\,dxdy.
\end{equation}
Since $\hDW$ has a positive ground state (unique up to phase, see~\cite[Section~XIII.12]{ReeSim4}), and $u_+$ is chosen positive, it follows that $\mu_+$ is the lowest eigenvalue of $\hDW$, with corresponding eigenfunction $u_+$.  

We denote $\mu_-$ the smallest eigenvalue above $\mu_+$, $u_-$ an associated eigenfunction, and $\muex$ the third eigenvalue. We aim at proving 
\begin{itemize}
	\item that $\EDW$, $\mu_+$ and $\mu_-$ are given to leading order in terms of the ground state problem in a single well (left or right).
	\item asymptotics for the first spectral gap:
	\begin{equation}\label{eq:annonce 1}
	\mu_- - \mu_+ \underset{L\to \infty}{\to} 0  
	\end{equation}
	with a precise rate (both as an upper and lower bound).
	\item asymptotics for the associated eigenfunctions: that they both converge to superpositions of eigenfunctions of the single wells and that 
	\begin{equation}\label{eq:annonce 2}
	\norm{|u_+| - |u_-|}\underset{L\to \infty}{\to} 0
	\end{equation}
	in suitable norms, and with a precise optimal rate.
	\item a $L$-independent lower bound to the second spectral gap:
	\begin{equation}\label{eq:annonce 3}
	\muex-\mu_- \geq C, \mbox{ independently of } L.
	\end{equation}
\end{itemize}
The spectral theory of Hamiltonians with multiple wells has a long and rich history, selected references most relevant to the following being~\cite{MorSim-80,Davies-82,ComSei-78,AveSeil-75,Harrell-78,Harrell-80,HelSjo-84,HelSjo-85,Simon-84}. See also~\cite{DimSjo-99,Helffer-88} for reviews. Corresponding non-linear results are also available~\cite{Daumer-91,Daumer-94,Daumer-96}, but we have not found proofs of the aforementioned bounds for the setting just described (dictated by the model of interest in~\cite{RouSpe-16,OlgRouSpe-20}). 

Typically, and in particular regarding results with the level of precision we aim at, the analysis in the aforementioned references is performed in a semi-classical regime, namely one studies the spectral properties of 
\begin{equation}\label{eq:semiclassic}
-\hbar ^2  \Delta + V  
\end{equation}
as $\hbar \to 0$, with $V$ a \emph{fixed} multi-well potential. Say the above, symmetric, $\VDW$ but with $L$ fixed.  One obtains that at leading order the eigenvalues are grouped in pairs around the eigenvalues corresponding to a single well (with appropriate modifications for more than two wells, asymmetric wells, or degenerate one-well eigenvalues). This corresponds to eigenfunctions being strongly suppressed in the \emph{classically forbidden region} far from the wells. The (small) splitting between pairs of eigenvalues can be estimated with some precision, and corresponds to the \emph{tunnel effect}, due to quantum eigenfunctions being small but non-zero in the classically forbidden region. That is, quantum mechanically, there is a flux of particles through potential barriers, that is manifested in a lifting of classical energy degeneracies.  

In fact, if $u_{j,+}$ and $u_{j,-}$ are the eigenfunctions corresponding respectively to the smallest and largest eigenvalue in the $j$-th pair, one has 
\begin{equation}\label{eq:annonce +}
u_{j,+} \simeq \frac{u_{j,\ell} + u_{j,r}}{\sqrt{2}} 
\end{equation}
and 
\begin{equation}\label{eq:annonce -}
u_{j,-} \simeq \frac{u_{j,\ell} - u_{j,r}}{\sqrt{2}} ,
\end{equation}
with $u_{j,\ell}$ and $u_{j,r}$ the $j$-th eigenfunction of (respectively) the left and right well. The results on eigenvalues are a reflection of this fact.

Our main results~\eqref{eq:annonce 1}-\eqref{eq:annonce 2}-\eqref{eq:annonce 3} (stated more precisely below) are adaptations of the above well-known findings to the case at hand, namely $\hbar$ fixed and $L\to \infty$. For the applications in~\cite{OlgRouSpe-20} we need the optimal rates in~\eqref{eq:annonce 1} and~\eqref{eq:annonce 2}, i.e. to exactly identify the order of magnitude of the tunneling term. To a large extent, the sequel is an adaptation of known techniques, but we face two main new difficulties:
\begin{itemize}
 \item the fact that we start from the \emph{non-linear} Hartree problem.
 \item the lack of semi-classical WKB expansions for single-well eigenfunctions, that are essentially fixed in our setting.  
\end{itemize}
The second point is particularly relevant to the derivation of the optimal rates in~\eqref{eq:annonce 1} and~\eqref{eq:annonce 2}. 

\bigskip

\noindent \textbf{Acknowledgments:} We thank Dominique Spehner for useful discussions and the joint work on the companion paper~\cite{OlgRouSpe-20}. A mistake in a previous version was kindly pointed out to us by Jean Cazalis and Mathieu Lewin. Funding from the European Research Council (ERC) under the European Union's Horizon 2020 Research and Innovation Programme (Grant agreement CORFRONMAT No 758620) is gratefully acknowledged.

\section{Main results}\label{sect:main}

We carry on with the previous notation, and also denote 
\begin{equation}\label{eq:simple pot}
V(x)=|x|^s,
\end{equation}
with $s\ge2$, our single-well potential, appropriately translated in~\eqref{eq:leftrightpot}, recalling that  
\begin{equation*}
\bx=\big(L/2,0,\dots,0\big)\in\mathbb{R}^d.
\end{equation*}
As regards interactions, we consider them repulsive, i.e. assume $\lambda \geq 0$ and let  $w\in L^\infty(\mathbb{R}^d)$ with compact support such that ($\widehat w$ stands for the Fourier transform)
\begin{equation}\label{eq:assum w}
w\ge0,\quad \widehat w\ge 0.
\end{equation}
Regularity assumptions could be relaxed to some extent, but we do not pursue this.

We consider the Hartree functional in the double-well~\eqref{eq:Hartree func}
The existence of a minimizer for~\eqref{eq:Hartree ener} follows from standard techniques ~\cite[Theorem 11.8]{LieLos-01}, combined with the fact that $V_\mathrm{DW}$ prevents mass losses at infinity. The uniqueness of the minimizer $u_+$ up to a constant phase factor follows from the assumption $\widehat w\ge0$. Let $u_+$ be the unique minimizer. Being unique, it is even under reflections across the $x_1=0$ hyperplane.
We also know that $u_+$ is the unique ground state of the mean-field double-well Hamiltonian~\eqref{eq:Hartree hamil},  i.e.
\begin{equation*}
h_\mathrm{DW}u_+=\mu_+ u_+.
\end{equation*}
Due to the growth of $V_\mathrm{DW}$ the Hamiltonian $h_\mathrm{DW}$ has compact resolvent. We call $u_-$ and $u_\mathrm{ex}$ the eigenvectors whose corresponding energies $\mu_-$ and $\mu_\mathrm{ex}$ are, respectively, the first and second eigenvalue of $h_\mathrm{DW}$ above $\mu_+$. In other words
\begin{equation} \label{eq:h_expansion}
h_\mathrm{DW}=\mu_+\ket{u_+}\bra{u_+}+\mu_-\ket{u_-}\bra{u_-}+\mu_\mathrm{ex}\ket{u_\mathrm{ex}}\bra{u_\mathrm{ex}}+\sum_{n\ge4}\mu_n\ket{u_n}\bra{u_n},
\end{equation}
where
\begin{equation*}
\mu_+<\mu_-\le \mu_\mathrm{ex} \le \mu_n,\;\forall n,\qquad\text{and}\; \{u_+,u_-,u_\mathrm{ex},u_4,u_5,\dots\}\;\text{form an o.n. basis}.
\end{equation*}
Since $|u_+|^2$ is symmetric under reflections across the $x_1=0$ hyperplane, the Hamiltonian $h_\mathrm{DW}$ commutes with those. We can thus choose each eigenvector $u_+,u_-,u_\mathrm{ex},u_n$, $n\ge4$ to be either symmetric or anti-symmetric with respect to such a reflection. In particular, $u_+$ being positive, it must be symmetric.

We will also consider Hartree functionals with external potential $V_\ell$ or $V_r$, that is,
\begin{equation}\label{eq:left right Hartree}
\begin{split}
\mathcal{E}_\ell[u]=\int_{\mathbb{R}^d} |\nabla u(x)|^2\,dx+\int_{\mathbb{R}^d} V_{\ell}(x)|u(x)|^2\,dx+\frac{\lambda}{2}\iint_{\mathbb{R}^d\times\mathbb{R}^d} w(x-y)|u(x)|^2|u(y)|^2\,dxdy\nonumber\\
\mathcal{E}_r[u]=\int_{\mathbb{R}^d} |\nabla u(x)|^2\,dx+\int_{\mathbb{R}^d} V_{r}(x)|u(x)|^2\,dx+\frac{\lambda}{2}\iint_{\mathbb{R}^d\times\mathbb{R}^d} w(x-y)|u(x)|^2|u(y)|^2\,dxdy.
\end{split}
\end{equation}
We will use combinations of the minimizers of $\mathcal{E}_\ell$ and $\mathcal{E}_r$ to approximate the function $u_+$. To this end, we define minimal energies at mass $1/2$
\begin{equation}\label{eq:Hartree left right}
\begin{split}
E_\ell=\inf\Big\{\mathcal{E}_\mathrm{\ell}[u]\;|\;u\in H^1(\mathbb{R}^d)\cap L^2\big(\mathbb{R}^d,V_{\mathrm{\ell}}(x)\,dx\big),\int_{\mathbb{R}^d} |u(x)|^2\,dx=\frac{1}{2}\Big\}\\
E_r=\inf\Big\{\mathcal{E}_\mathrm{r}[u]\;|\;u\in H^1(\mathbb{R}^d)\cap L^2\big(\mathbb{R}^d,V_{\mathrm{r}}(x)\,dx\big),\int_{\mathbb{R}^d} |u(x)|^2\,dx=\frac{1}{2}\Big\}.
\end{split}
\end{equation}
As for the full double-well problem, our assumptions on $V$ and $w$ are sufficient to deduce the existence and uniqueness of a minimizer using standard methods in the calculus of variations. Since the functionals $\mathcal{E}_\ell$ and $\mathcal{E}_r$ coincide up to a space translation of the external potential,
\begin{equation*}
E_\ell=E_r=\mathcal{E}_\ell[u_\ell]=\mathcal{E}_r[u_r],
\end{equation*}
where $u_\ell$ and $u_r$ are, respectively, the unique positive ground states of
\begin{equation}\label{eq:single well hamil}
h_\ell=-\Delta+V_\ell+\lambda w*|u_\ell|^2,\qquad h_r=-\Delta+V_r+\lambda w*|u_r|^2
\end{equation}
with ground state energies $\mu_\ell=\mu_r$. Again, since the functionals coincide up to a translation, the minimizers coincide up to a translation, i.e.,
\begin{equation*}
u_\ell(x)=u_r(x-2\bx) =u_0( x - \bx),
\end{equation*}
where $u_0$ is the minimizer obtained by setting $\bx=0$ in~\eqref{eq:left right Hartree}.
We are adopting here the normalization $\|u_r\|_{L^2}^2=\|u_\ell\|_{L^2}^2=1/2$, which implies
\begin{equation*}
\langle u_{\ell},h_{\ell}u_{\ell}\rangle=\frac{\mu_{\ell}}{2},\qquad \langle u_{r},h_{r}u_{r}\rangle=\frac{\mu_{r}}{2}.
\end{equation*}

Next we define the main small parameter (in the limit $L\to \infty$) entering our analysis. Associated to~\eqref{eq:simple pot} is a semi-classical Agmon distance~\cite{Agmon-82,DimSjo-99,Helffer-88}
\begin{equation}\label{eq:Agmon}
A(x)=\int_0^{|x|}\sqrt{V(r')}\,dr'=\frac{1}{1+s/2}|x|^{1+s/2}.
\end{equation}
The above governs the decay at infinity of eigenfunctions of the single-well Hamiltonians~\eqref{eq:single well hamil}. Accordingly it sets the $L$-dependence of the tunneling term (splitting between eigenvalue pairs)
\begin{equation}\label{eq:tunnel}
\boxed{T := e^{-2 A \left(\frac{L}{2}\right) }  \underset{L\to \infty}{\to} 0.}
\end{equation}
This is the energetic contribution of classically forbidden regions: $e^{-A \left(\frac{L}{2}\right) }$ is the order of magnitude of double-well wave-functions close to the potential barrier at $x_1 = 0$ (i.e. at distances $L/2$ from the potential wells). It has to be squared for the tunneling term is essentially an overlap of two such wave functions. We will express all our estimates in terms of the above parameter.

Similarly one can associate a distance to the double-well potential~\eqref{eq:double pot}
\begin{equation} \label{eq:A_DW}
A_\mathrm{DW}(x)=\begin{cases}
A(x-\bx)\quad&x_1\ge0\\
A(x+\bx)\quad&x_1\le0.
\end{cases}
\end{equation}
The value $A_{\mathrm{DW}}(x)$ represents the Agmon distance $A$ between the point $x$ and the closest of the two bottoms of the wells, namely, either $\bx$ or $-\bx$. In Section \ref{sect:refined} we will need to introduce a further refinement of $A_\mathrm{DW}$, namely the distance within the potential landscape $V_\mathrm{DW}$ between \emph{any} two points.

%

We shall prove the following result, for space dimensions $1 \leq d \leq 3$ (the upper restriction only enters through the Sobolev embedding, and we certainly believe it could be relaxed).

\begin{theorem}[\textbf{Hartree problem in a double-well}] \label{thm:main}\mbox{}\\
	Assume $1\le d \le 3$. We take $\varepsilon > 0$ to stand for an arbitrarily small number, fixed in the limit $L\to \infty$. Generic constants $c_\eps, C_\eps >0$ only depend on this number. We have
	
	\begin{itemize} 
		\item[$(i)$]\textbf{Bounds on the first spectral gap.}
		\begin{equation}
		c_\varepsilon T^{1+\varepsilon}\le\mu_--\mu_+\le C_\varepsilon T^{1-\varepsilon}. \label{eq:first_gap}
		\end{equation}
		\item[$(ii)$]\textbf{Bounds on the second spectral gap.}
		\begin{equation}
		\mu_\mathrm{ex}-\mu_-\ge C.\label{eq:second_gap}
		\end{equation}
		independently of $L$.
		\item[$(iii)$]\textbf{Convergence of lower eigenvectors.}
		\begin{align} \label{eq:L1_convergence}
		\left\||u_+|^2-|u_-|^2\right\|_{L^1}\le C_\varepsilon T^{1-\varepsilon}\\
		\label{eq:L2_convergence}
		\left\||u_+|-|u_-|\right\|_{L^2}\le C_\varepsilon T^{1/2-\varepsilon}\\ \label{eq:Linfty_convergence}
		\left\||u_+|-|u_-|\right\|_{L^\infty}\le C_\varepsilon T^{1/2-\varepsilon}.
		\end{align}
		
	\end{itemize}
\end{theorem}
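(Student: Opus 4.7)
The plan is to exploit the reflection symmetry of $\VDW$ across $\{x_1=0\}$ and to compare the double-well problem to the two independent single-well problems. The essential technical input, used at every step, consists of Agmon-type decay estimates: $\|e^{(1-\eps)A_{\mathrm{DW}}}u_\pm\|_{L^2}\le C_\eps$ for the double-well eigenfunctions, and $\|e^{(1-\eps)A}u_0\|_{L^2}\le C_\eps$ for the single-well ground state, both upgradable to pointwise bounds by elliptic regularity (where the Sobolev embedding is what forces $d\leq 3$). These follow from the standard Persson--Agmon argument once one observes that $\VDW-\mu_+\ge c>0$ outside a fixed neighborhood of the two wells, uniformly in $L$. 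With these in hand, $T=e^{-2A(L/2)}$ naturally quantifies the overlap of single-well wave functions near the barrier.

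For the upper bound in (i), I would construct the orthogonal trial pair $\psi_\pm=(u_\ell\pm u_r)/\|u_\ell\pm u_r\|_{L^2}$, which are respectively symmetric and antisymmetric under $x_1\mapsto-x_1$. Because $u_+$ is symmetric, $\psi_-\perp u_+$, and the min-max principle gives $\mu_-\le\langle\psi_-,\hDW\psi_-\rangle$, while an analogous variational bound on the nonlinear functional gives an upper bound on $\mu_+$ via $\EDW\le\cEDW[\psi_+]$. Subtracting, the gap reduces to overlap integrals localized where $u_\ell$ and $u_r$ are simultaneously non-negligible, which by the Agmon decay are controlled by $T^{1-\eps}$. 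A subtlety is that the self-consistent mean field $\lambda w\ast|u_+|^2$ inside $\hDW$ must be compared to $\lambda w\ast(|u_\ell|^2+|u_r|^2)$; I would handle this by a bootstrap, first establishing a weaker form of $u_+\simeq\psi_+$ and then inserting it into the variational estimate.

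For the lower bounds in (i) and (ii), I would introduce smooth cutoffs $\chi_\ell,\chi_r$ adapted to the two half-spaces, with gradient controlled in the barrier region. The Agmon decay then ensures that $\chi_\ell u_\pm$ and $\chi_r u_\pm$ are, modulo errors of size $O(T^{1/2+\eps})$ in $L^2$, scalar multiples of $u_\ell$ and $u_r$ respectively, so that $u_+,u_-$ both lie close to the two-dimensional subspace $\mathrm{span}\{\psi_+,\psi_-\}$. Restricting $\hDW$ to this subspace via a Feshbach--Schur projection yields a $2\times2$ self-adjoint matrix whose spectrum lies within $O(T^{1+\eps})$ of $\{\mu_+,\mu_-\}$; since $\hDW$ is block-diagonal in the symmetric/antisymmetric decomposition, its splitting reduces to $\langle\psi_+,\hDW\psi_+\rangle-\langle\psi_-,\hDW\psi_-\rangle$, an explicit tunneling overlap that I would bound from below by $c_\eps T^{1+\eps}$ using sharp pointwise \emph{lower} bounds on $u_0$ near the barrier. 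For the second gap (ii), any normalized eigenfunction orthogonal to $\mathrm{span}\{\psi_+,\psi_-\}$ either has non-trivial mass in the barrier region, paying an $L$-independent positive energy cost, or is close to an excited single-well state whose gap above $\mu_\ell$ is a fixed positive constant.

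The norm convergences (iii) are then essentially corollaries: from the sharp spectral gap estimate and the spectral projection argument above one obtains $u_\pm=\psi_\pm+O_{L^2}(T^{1/2-\eps})$, and since $u_\ell,u_r\ge0$ have exponentially small overlap, $|\psi_+|-|\psi_-|$ is pointwise supported essentially in a neighborhood of $\{x_1=0\}$, yielding the $L^2$ and (using elliptic regularity together with the decay estimates) $L^\infty$ bounds. The $L^1$ bound on $|u_+|^2-|u_-|^2$ follows after one more Agmon computation on the crossed term $u_\ell u_r$. The main obstacle throughout is the sharp lower bound on the tunneling overlap in (i): without a semi-classical WKB expansion for the nonlinear object $u_0$, one must combine the upper Agmon decay with matching pointwise \emph{lower} bounds on $u_0$ at intermediate Agmon distances, and this is where the bulk of the new technical work necessarily lies.
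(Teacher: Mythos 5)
Your plan matches the paper at the level of the preliminary Agmon machinery, the upper bound on the first gap, and the second gap, but it diverges in the two places where the real work is, and in both cases the deviation creates a genuine gap.

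\textbf{The lower bound on the first gap.} You propose a Feshbach reduction to $\mathrm{span}\{\psi_+,\psi_-\}$ and then a direct lower bound on the two-mode tunneling overlap $\langle\psi_+,\hDW\psi_+\rangle-\langle\psi_-,\hDW\psi_-\rangle$, hoping to control it by pointwise lower bounds on $u_0$. The trouble is that this overlap, after expanding the $\psi_\pm$ in terms of $u_\ell,u_r$ and subtracting, reduces to a signed combination of cross terms like $\langle u_\ell,(V_{\mathrm{DW}}-V_r)u_r\rangle$ and the nonlinear correction $\lambda\langle u_\ell,(w*(|u_+|^2-|u_r|^2))u_r\rangle$, in which the $\mu_rS$ pieces cancel exactly. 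A lower bound then requires controlling not just the magnitude but also the \emph{sign} and the precise location of where the integrand is supported; the nonlinear correction in particular has no definite sign, and the potential difference $V_\mathrm{DW}-V_r$ vanishes on a half-space and is small near the barrier. This is precisely the step that classically requires a WKB expansion, and pointwise lower bounds on $u_0$ alone do not resolve the cancellation. The paper sidesteps this entirely with the Kac--Thompson--Simon double-commutator identity
\begin{equation*}
\mu_--\mu_+ = \tfrac12\int_{\R^d}\left|\nabla\!\left(u_-/u_+\right)\right|^2|u_+|^2\,dx,
\end{equation*}
which writes the gap as a manifestly non-negative integral. One then restricts to a tube across the barrier, uses the pointwise lower bound on $u_+$ from Proposition~\ref{prop:lower_u_+}, applies Cauchy--Schwarz in $x_1$, and exploits the oddness of $u_-/u_+$ to reduce to the non-degeneracy of $u_-/u_+$ near a well, which is a local $O(1)$ fact. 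No sign control of an overlap is ever needed. Your write-up acknowledges the WKB obstacle but does not supply the alternative identity that makes the lower bound tractable; as stated, the Feshbach route stalls at the cancellation.

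\textbf{The $L^1$ estimate.} You propose to deduce \eqref{eq:L1_convergence} from $u_\pm=\psi_\pm+O_{L^2}(T^{1/2-\eps})$ plus an Agmon bound on $u_\ell u_r$. But expanding $|u_+|^2-|u_-|^2$ with that $L^2$ error gives cross terms like $(|u_+|-|\psi_+|)(|u_+|+|\psi_+|)$, whose $L^1$ norm is only $O(T^{1/2-\eps})$ by Cauchy--Schwarz — not the claimed $T^{1-\eps}$. Indeed, the paper's Proposition~\ref{prop:first_convergence} only yields $T^{1/2-\eps}$ (for $u_+$) and even $T^{1/4-\eps}$ (for $u_-$) at this stage, which is why Section~\ref{sect:refined} introduces a genuinely new ingredient: Dirichlet quasi-modes $u_r^{(D)},u_\ell^{(D)}$ obtained by imposing a Dirichlet condition near the far well, and a refined Agmon distance $d_{\mathrm{DW}}(\cdot,\pm\bx)$ that sees across the barrier. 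These yield $\|(\hDW-\mu^{(D)})\psi_{r/\ell}\|_{L^2}\le C_\eps T^{1-\eps}$, and a resolvent-contour argument then gives $u_\pm=\psi_\pm+O_{L^2}(T^{1-\eps})$. That improved rate — not $T^{1/2-\eps}$ — is what makes \eqref{eq:L1_convergence} hold with $T^{1-\eps}$. Your sketch, using the single-well minimizers $u_\ell,u_r$ rather than the Dirichlet modes, would not deliver the stated rate.

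The remaining items (upper bound on the first gap, second gap, and the $L^2$ and $L^\infty$ bounds once the $H^2$ improvement is in place) are broadly in line with what the paper does.
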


A few comments:
\begin{enumerate}
	\item As mentioned above, corresponding results for the semi-classical setting have a long history~\cite{DimSjo-99,Helffer-88}. Obtaining the (almost) sharp lower bound in~\eqref{eq:first_gap} in this case usually relies on WKB expansions, unavailable in the present context. We however need this sharp bound in~\cite{OlgRouSpe-20} and have to come up with an alternative method.
	\item The relevance of the definition~\eqref{eq:tunnel} is vindicated by~\eqref{eq:first_gap}. With extra effort one should be able to show that $T$ gives the order of magnitude of the first spectral gap up to at most logarithmic corrections.
	\item Item (iii) is also crucial in~\cite{OlgRouSpe-20}, in particular~\eqref{eq:L1_convergence}. It reflects the expectation~\eqref{eq:annonce +}-\eqref{eq:annonce -}, i.e. that $u_+$ and $u_-$ mostly differ by a sign change in a half-space. This will be put on a rigorous basis later, following~\cite{HelSjo-84,HelSjo-85}. With a suitable choice of $u_{j\ell}, u_{j,r}$ we indeed vindicate~\eqref{eq:annonce +}-\eqref{eq:annonce -}, with remainders $O(T^{1+\varepsilon})$. Then~\eqref{eq:L1_convergence} follows, using also decay estimates for the product $u_{j,\ell} u_{j,r}$. The less sharp estimates~\eqref{eq:L2_convergence}-\eqref{eq:Linfty_convergence} are mostly stated for illustration (and will serve as steps in the proof).
	\item The results in Theorem \ref{thm:main} do not depend, in their essence, on the particular form $w*|u|^2$ of the non-linearity. A possible generalization, modulo a number of adaptations in the proof, would include a cubic local (Gross-Pitaevskii) non-linearity corresponding to $w(x)=\delta(x)$.
\end{enumerate}

\medskip

The following statement on higher eigenvalues/eigenfunctions follows from  variants of the arguments establishing Theorem \ref{thm:main}, as we quickly explain in Appendix \ref{app:higher}. We denote by $\mu_j ^{\ell}, j = 1 \ldots \infty$ the eigenvalues of the left Hamiltonian $h_\ell$ (identical to those of $h_r$), $m(j)$ their multiplicities  and 
$$ M(k) = \sum_{j=1}^k m(j),$$
with the convention that $M(0) = 0$. 

\begin{theorem}[\textbf{Higher spectrum}]\label{thm:higher}\mbox{}\\
Assume $1\le d\le 3$. Let $k\geq 1$ and $\mu_{2M(k-1)+1}, \ldots, \mu_{2M(k)}$ the ordered eigenvalues of $\hDW$ in a corresponding spectral window (counted with multiplicities).  We have:
	\begin{itemize} 
		\item[$(i)$]\textbf{Bounds on small spectral gaps.} For all $2M(k-1)+1 \leq j \leq 2M(k)$
		\begin{equation}
		\left|\mu_{j} - \mu_{k} ^{\ell} \right| \underset{L\to \infty}{\to} 0. 
		\end{equation}
		\item[$(ii)$]\textbf{Bounds on large spectral gaps.} For all $2M(k-1)+1 \leq j \leq 2M(k)$
		\begin{equation}
		\mu_{2 M(k) +1 } - \mu_\mathrm{j}\ge C_k 
		\end{equation}
		for some constant $C_k>0$ independent of $L$.
\item[$(iii)$]\textbf{Convergence of higher eigenvectors.} One can pick an  eigenbasis $u^+_1, u^-_1,\ldots, u^+_{m(k)}, u^- _{m(k)}$ of $\1_{\mu_{2M(k-1)+1} \leq \hDW \leq \mu_{2M(k)}} \hDW$ such that for all $1\leq m \leq m (k)$
\begin{equation}\label{eq:higher CV}
\left\Vert  u^{\pm}_m  - \frac{u_m^{\ell} \pm u_m^{r}}{\sqrt{2}} \right\Vert_{L^2} \underset{L\to \infty}{\to} 0,
\end{equation}
where $u_1^{\ell}, \ldots, u_{m(k)}^{\ell}$ form an orthonormal basis of $\1_{h^{\ell} = \mu^{\ell}_k}h^{\ell}$ and $u_1^{r}, \ldots, u_{m(k)}^{r}$ are their reflections.	Moreover
		\begin{align}
		\left\||u_{m}^+|^2-|u_{m}^-|^2\right\|_{L^1} \underset{L\to \infty}{\to}&0\\
		\left\||u_{m}^+|-|u_{m}^-|\right\|_{L^2} \underset{L\to \infty}{\to}&0 \\ \label{eq:Linfty_convergence2}
		\left\||u_{m}^+|-|u_{m}^-\right\|_{L^\infty}\underset{L\to \infty}{\to}&0,
		\end{align}
and 
\begin{equation}
		\int_{x_1 \le 0}\left| u_{m}^+ + u_{m} ^- \right|^2\,dx \to 0,  \int_{x_1 \ge 0}\left| u_{m}^+-u_{m}^-\right|^2\,dx\underset{L\to \infty}{\to} 0.
		\end{equation}		
	\end{itemize}
\end{theorem}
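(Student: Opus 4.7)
My plan is to transpose the arguments of Theorem~\ref{thm:main} to the $2m(k)$-dimensional spectral window near $\mu_k^\ell$, using the fact that only qualitative $o_L(1)$ bounds are required here rather than sharp rates in $T$. Fix any $L^2$-orthonormal basis $u_1^\ell,\ldots,u_{m(k)}^\ell$ of $\ker(h_\ell-\mu_k^\ell)$, let $u_m^r$ denote its reflection across $\{x_1=0\}$, and set $\psi_m^\pm := (u_m^\ell \pm u_m^r)/\sqrt{2}$. Agmon-type decay for the single-well eigenfunctions (whose effective support is $O(1)$ while the wells are at distance $L$) implies $\langle u_m^\ell,u_n^r\rangle = o_L(1)$, so the family $\{\psi_m^\pm\}_{m=1}^{m(k)}$ is $L^2$-orthonormal up to $o_L(1)$.

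For the upper bound in (i), I compute $\langle\psi_m^\pm,\hDW\psi_m^\pm\rangle$. The operator $\hDW-h_\ell$ acts on $u_m^\ell$ only through the mean-field difference $\lambda w\ast(|u_+|^2-|u_\ell|^2)$. By Theorem~\ref{thm:main}(iii), $|u_+|^2$ agrees with $|u_\ell|^2+|u_r|^2$ up to $o_L(1)$ in $L^1$, and $w$ is bounded with compact support, so this convolution is $o_L(1)$ in $L^\infty$ on the effective support of $u_m^\ell$; the contribution of $w\ast|u_r|^2$ there is likewise negligible by the same decay. Hence $\langle\psi_m^\pm,\hDW\psi_m^\pm\rangle = \mu_k^\ell + o_L(1)$, and the min-max principle delivers $\mu_{2M(k-1)+j}\leq\mu_k^\ell+o_L(1)$ for $1\leq j\leq 2m(k)$.

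For the lower bounds in (i) and the gap (ii), I would run IMS/Agmon localization as in Theorem~\ref{thm:main}. Introduce a smooth partition $\chi_\ell^2+\chi_r^2=1$ with $\chi_\ell$ supported in $\{x_1\leq\delta\}$ and $\chi_r$ in $\{x_1\geq-\delta\}$. For any normalized $\psi\in\mathrm{Ran}\,\1_{\hDW\leq\mu_k^\ell+\eta}$ with $\eta\in(0,\mu_{k+1}^\ell-\mu_k^\ell)$, Agmon-Persson estimates force exponentially small mass near the barrier, hence $\|\chi_\ell\chi_r\psi\|=o_L(1)$ and the IMS cross-term is negligible. Then $\chi_\ell\psi$ is approximately in the form domain of $h_\ell$ with $\langle\chi_\ell\psi,h_\ell\chi_\ell\psi\rangle\leq(\mu_k^\ell+\eta)\|\chi_\ell\psi\|^2+o_L(1)$, and symmetrically for $\chi_r\psi$. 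Viewing the linear map $\psi\mapsto(\chi_\ell\psi,\chi_r\psi)$ as an approximate isometry into $\mathrm{Ran}\,\1_{h_\ell\leq\mu_k^\ell+\eta}\oplus\mathrm{Ran}\,\1_{h_r\leq\mu_k^\ell+\eta}$ (a space of dimension $2M(k)$) gives $\dim\mathrm{Ran}\,\1_{\hDW\leq\mu_k^\ell+\eta}\leq 2M(k)$ for $L$ large. This produces both the matching lower bound in (i) and statement (ii). The dimension-counting step is the main obstacle: one must carefully control the approximate orthogonality of the single-well contributions so that no spurious collapse below $2m(k)$ occurs; this is much easier here than in Theorem~\ref{thm:main} because only $o_L(1)$ precision is demanded.

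For (iii), $\hDW$ commutes with the reflection $R$, so the spectral projector $P_k$ onto eigenvalues in a small fixed window around $\mu_k^\ell$ splits as $P_k=P_k^++P_k^-$ according to parity. Items (i)-(ii) yield $\mathrm{rank}\,P_k=2m(k)$ for $L$ large, and the trial-state computation above gives $\|(1-P_k)\psi_m^\pm\|\to 0$; since $R\psi_m^\pm=\pm\psi_m^\pm$ this upgrades to $\|(1-P_k^\pm)\psi_m^\pm\|\to 0$, so $\{P_k^\pm\psi_m^\pm\}_{m=1}^{m(k)}$ is an asymptotically orthonormal family in $\mathrm{Ran}\,P_k^\pm$, forcing $\mathrm{rank}\,P_k^\pm=m(k)$. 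Setting $u_m^\pm:=P_k^\pm\psi_m^\pm/\|P_k^\pm\psi_m^\pm\|$ yields the desired orthonormal basis satisfying \eqref{eq:higher CV}, and the corresponding half-space overlap statements follow immediately from $R u_m^\pm=\pm u_m^\pm$ and $u_m^\ell\approx\chi_\ell u_m^+\approx\chi_\ell u_m^-$. Finally the $L^1$, $L^2$ and $L^\infty$ convergences for $|u_m^+|$ and $|u_m^-|$ follow from \eqref{eq:higher CV} combined with Agmon decay of the cross products $u_m^\ell u_n^r$ and standard elliptic/Sobolev regularity, in exact parallel with the proof of Theorem~\ref{thm:main}(iii).
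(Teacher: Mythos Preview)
Your batch strategy (min-max with all level-$k$ trial states at once, then spectral projection) differs from the paper's organization, which proceeds by induction on $k$ and treats one eigenvector at a time: it takes the genuine $\hDW$-eigenfunction $u_{2M(k-1)+1}$, localizes it to the left well, lets that localization \emph{define} the first single-well basis vector $U_k^\ell$, then iterates with trial states orthogonal to everything already found. Both routes are sound in spirit for (i) and (ii); two points in your sketch need repair.

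First, your min-max step for $\mu_{2M(k-1)+j}$ is under-supported: a $2m(k)$-dimensional trial space only controls the first $2m(k)$ eigenvalues of $\hDW$, not those starting at index $2M(k-1)+1$. You must either adjoin trial states from all levels $1,\dots,k$ (giving a $2M(k)$-dimensional almost-orthonormal family) or project your $\psi_m^\pm$ orthogonally to the first $2M(k-1)$ eigenfunctions of $\hDW$; this is exactly where the paper's ``$+$ correction'' enters and where the induction hypothesis is genuinely used.

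Second, and more substantively for (iii), your $u_m^\pm:=P_k^\pm\psi_m^\pm/\|P_k^\pm\psi_m^\pm\|$ lie in the spectral subspace but are \emph{not} eigenvectors of $\hDW$ (the eigenvalues in the window are generically distinct), so they do not form the eigenbasis the theorem demands. Fixing the single-well basis $u_m^\ell$ in advance is the obstruction: the unitary on $\ker(h_\ell-\mu_k^\ell)$ relating your chosen basis to the one singled out by the genuine even/odd $\hDW$-eigenvectors need not be the identity. The paper circumvents this by running the logic in reverse --- start from eigenvectors of $\hDW$ with definite parity and \emph{define} $u_m^\ell$ as the limit of their left-well localizations --- which simultaneously provides the pairing between $u_m^+$ and $u_m^-$ sharing the same $u_m^\ell$, something your construction does not address.
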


\medskip

We do not state convergence rates here, but believe the same rates as in Theorem~\ref{thm:main} can be achieved, for $k$ fixed in the limit $L\to \infty$ (or, better said, for convergence rates whose $k$-dependence is left unspecified). We do not pursue the details, nor the dependence on $k$, for we do not need this in our applications~\cite{OlgRouSpe-20}. Certainly, if the eigenvalues are taken high enough in the spectrum ($k\to \infty$ fast enough as $L\to \infty$) the two-mode approximation~\eqref{eq:annonce +}-\eqref{eq:annonce -}, on which the result relies, breaks down.  

\medskip

The rest of the paper contains the proof of Theorem~\ref{thm:main}, organized as follows:
\begin{itemize}
 \item Section~\ref{sect:bounds}: optimal bounds on the decay of eigenfunctions far from the potential wells, and first consequences thereof.
 \item Section~\ref{sec:gaps}: proof of Items (i) and (ii) in Theorem~\ref{thm:main}. The hardest part is the lower bound on the first gap in Item (i).
 \item Section~\ref{sect:refined}: adaptation of techniques of Helffer-Sj\"ostrand~\cite{HelSjo-85} to deduce Item (iii) from the previous bounds. 
 \item Appendix~\ref{sect:app}: a collection of straightforward consequences of the decay estimates of Section~\ref{sect:bounds}.
\end{itemize}

Finally, in Appendix~\ref{app:higher}, we briefly sketch the additional ingredients needed for the proof of Theorem \ref{thm:higher}.

\section{Preliminary estimates} \label{sect:bounds}

\subsection{Regularity and uniformity results} \label{subsect:regularity}

We start by stating and proving in this subsection a number of important properties of the eigenvectors and eigenvalues of $h_\mathrm{MF}$, $h_r$, and $h_\ell$.

\begin{lemma}[\textbf{Regularity}]\label{lemma:regularity}\mbox{}\\
	The functions $u_+,u_-,u_\mathrm{ex},u_\ell,u_r$, and $u_n$ with $n\ge4$, belong to $C^\infty(\mathbb{R}^d)$.
\end{lemma}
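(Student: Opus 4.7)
The plan is to establish $C^\infty$-regularity by a standard elliptic bootstrap, once the relevant equations are put in a common form. All the eigenfunctions $u$ in the statement solve an equation of the type
\begin{equation*}
-\Delta u = (\mu - V - \lambda\, w \ast |u_*|^2)\, u \qquad \text{on } \mathbb{R}^d,
\end{equation*}
with $V\in\{V_\ell,V_r,V_\mathrm{DW}\}$ and $u_*\in\{u_\ell,u_r,u_+\}$ a fixed (positive) ground state used to define the Hartree mean field. The variational formulation furnishes the starting regularity $u\in H^1(\mathbb{R}^d)\cap L^2(\mathbb{R}^d,V\,dx)$, and since $w\in L^\infty$ has compact support while $|u_*|^2\in L^1$, one has $w\ast|u_*|^2\in L^\infty(\mathbb{R}^d)$ from the outset.

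First I would run a single-step elliptic argument: the right-hand side of the equation above is in $L^2_\mathrm{loc}$ (since $V$ is locally bounded and the nonlinear self-field is bounded), so local elliptic regularity gives $u\in H^2_\mathrm{loc}$, and the Sobolev embedding (available because $d\le 3$) promotes this to continuity. Applied in particular to $u_*$, it yields $w\ast|u_*|^2\in C^0(\mathbb{R}^d)$. From here I would bootstrap: if $u,u_*\in C^k_\mathrm{loc}$, then $|u_*|^2\in C^k_\mathrm{loc}$, hence $w\ast|u_*|^2\in C^k$ (derivatives of the convolution land on the smooth factor), and together with the fact that $V_\ell,V_r$ are $C^\infty$ (for $V(x)=|x|^s$ one only has to be careful at $x=0$, but there $V$ vanishes to order $s\ge 2$, and the eigenfunctions are smooth by the standard Kato/Schauder argument) one uses Schauder estimates to upgrade $u$ to $C^{k+2}_\mathrm{loc}$. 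Iterating gives $u\in C^\infty$ for the single-well minimizers $u_\ell,u_r$ globally.

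The main point to handle carefully is $V_\mathrm{DW}=\min(V_\ell,V_r)$, which is only Lipschitz across the hyperplane $\{x_1=0\}$: here the bootstrap described above yields $C^\infty$-regularity for $u_+,u_-,u_\mathrm{ex},u_n$ on each open half-space $\{x_1>0\}$ and $\{x_1<0\}$, and this is really what is needed in the sequel. Global smoothness across the crossing hyperplane follows by combining the parity of eigenfunctions under the reflection $x_1\mapsto -x_1$ (which forces all odd, respectively even, normal derivatives to vanish at $x_1=0$ depending on whether the eigenfunction is symmetric or antisymmetric) with the fact that $V_\ell-V_r$ vanishes on $\{x_1=0\}$, so that the one-sided limits of all derivatives of $V_\mathrm{DW}$ which actually enter the equation for the symmetry class of $u$ match across the interface.

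I expect the main technical obstacle to be precisely the matching at $\{x_1=0\}$; this is routine modulo careful bookkeeping of the parity, and is the one place where the statement is genuinely non-obvious. Away from this set, the lemma is a textbook bootstrap.
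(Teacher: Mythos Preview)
Your approach differs from the paper's: the paper runs a global Sobolev-product bootstrap, claiming that $u_+\in H^3$ and $W\in H^1$ give $Wu_+\in H^2$, whence $u_+\in H^4$, and so on. You instead localize to each open half-space, bootstrap there, and then invoke parity to glue across $\{x_1=0\}$. Your localization is the more honest route, because you correctly flag that $V_\mathrm{DW}=\min(V_\ell,V_r)$ is only Lipschitz at the interface --- and in fact the paper's iteration stalls there too, since the product estimate $H^{s_1}\cdot H^{s_2}\subset H^s$ needs $s\le\min(s_1,s_2)$, so with $W\in H^1_\mathrm{loc}$ one never gets $Wu_+$ beyond $H^1_\mathrm{loc}$.

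However, your parity patch does not close, and the lemma as stated is actually false at $\{x_1=0\}$. Take $u_+$, even in $x_1$ and strictly positive, and suppose for contradiction that $u_+\in C^3(\mathbb{R}^d)$; then parity forces $\partial_1 u_+(0,\cdot)=0$ and $\partial_1^3u_+(0,\cdot)=0$. But on $\{x_1>0\}$ the equation reads $\partial_1^2u_+=-\Delta'u_++(V_r+\lambda w\ast|u_+|^2-\mu_+)u_+$; differentiating once in $x_1$ and sending $x_1\to0^+$ (using $\partial_1 u_+(0,\cdot)=0$ and evenness of $w\ast|u_+|^2$) gives
\[
\partial_1^3u_+(0^+,x')=(\partial_1V_r)(0,x')\,u_+(0,x')=-\tfrac{sL}{2}\Big(\tfrac{L^2}{4}+|x'|^2\Big)^{(s-2)/2}u_+(0,x')\neq0,
\]
a contradiction. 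The flaw in your reasoning is that the odd derivative $\partial_1 V_\mathrm{DW}$ \emph{does} enter the expression for $\partial_1^3u$ at the interface, multiplied by $u(0,\cdot)\ne0$, and it jumps; your claim that parity kills the odd normal derivatives is circular, since it presupposes the very $C^k$ regularity being proved. What your half-space bootstrap legitimately delivers is $C^\infty$ on $\{x_1>0\}$ and $\{x_1<0\}$ separately (and $C^{2,\alpha}$ globally from the Lipschitz potential), which is in fact all the rest of the paper uses. A second, smaller gap: for $s\ge2$ not an even integer, $|x|^s$ is not $C^\infty$ at the origin, so the bootstrap at the well centers $\pm\bx$ also stalls at finite regularity; your appeal to ``the standard Kato/Schauder argument'' there is too quick.
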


\begin{proof}
	We discuss the case of $u_+$ only. Define
	\begin{equation*}
	W:=V_\mathrm{DW}+\lambda w*|u_+|^2-\mu_+.
	\end{equation*}
	Then $u_+$ then solves the elliptic equation with locally Lipschitz coefficients
	\begin{equation} \label{eq:equation_u_+}
	-\Delta u_++W u_+=0.
	\end{equation}
	This means that we can apply \cite[Theorem 8.8]{GilTru-01} and deduce that $u_+\in H^2(K)$ for every compact set $K\subset \mathbb{R}^d$. In order to prove higher regularity we will use a bootstrap argument. 
	
	Recall that, for a sufficiently regular $K$,
	\begin{equation*}
	\|fg\|_{H^s(K)}\le C \|f\|_{H^{s_1}(K)} \,\|g\|_{H^{s_2}(K)}
	\end{equation*}	
	for $s<s_1+s_2-d/2$. The validity of the above formula if $K$ is replaced by $\mathbb{R}^d$ is well known, and to deduce it for compact domains one uses Stein's extension Theorem \cite[Theorem~5.24]{AdaFou-03}. Now, since $u_+\in H^2(K)$ and $W\in H^1(K)$, the above inequality proves in particular that $Wu_+\in H^1(K)$. Due to \eqref{eq:equation_u_+}, this means $\Delta u_+\in H^1(K)$, and therefore $u_+\in H^3(K)$. We can now iterate the procedure, because $u_+\in H^3(K)$ and $W\in H^1(K)$ imply $Wu\in H^2(K)$. In this way we deduce that $u_+\in H^s(K)$ for any $s>0$. This implies that $u_+$ is $C^\infty$ in any sufficiently regular compact set, which means it is $C^\infty$ on the whole of $\mathbb{R}^d$. The same argument can be repeated  for all the other functions.	
\end{proof}

\begin{lemma}[\textbf{Uniform bound for the eigenvalues}] \label{lemma:uniform_mu} \mbox{}\\
	For each $j$, the $j$-th eigenvalue $\mu_j$ of $h_\mathrm{MF}$\footnote{We are adopting the convention $\mu_1=\mu_+$, $\mu_2=\mu_-$, and $\mu_3=\mu_\mathrm{ex}$.} satisfies
	\begin{equation}
	0< \mu_j \le C_j,
	\end{equation}
	for a constant $C_j$ that does not depend on $L$.
\end{lemma}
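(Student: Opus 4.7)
The strict positivity $\mu_j>0$ is essentially immediate from the structure of $h_\mathrm{DW}$ as a sum of non-negative operators. Since $\lambda,w\ge 0$ and $V_\mathrm{DW}\ge 0$, any normalized eigenfunction $u_j$ satisfies
\[
\mu_j=\int_{\R^d}|\nabla u_j|^2\,dx+\int_{\R^d}V_\mathrm{DW}|u_j|^2\,dx+\lambda\int_{\R^d}(w\ast|u_+|^2)|u_j|^2\,dx\ge 0.
\]
Equality to zero would force $|\nabla u_j|\equiv 0$, hence $u_j$ constant, contradicting $u_j\in L^2(\R^d)$; so $\mu_j>0$ for each (fixed) $L$.

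The substantive point is the upper bound, uniform in $L$, for which I would apply the Courant--Fischer min--max principle,
\[
\mu_j=\inf_{\substack{V\subset H^1\cap L^2(V_\mathrm{DW}\,dx)\\ \dim V=j}}\sup_{\substack{\phi\in V\\ \|\phi\|_{L^2}=1}}\langle\phi,h_\mathrm{DW}\phi\rangle,
\]
and exhibit a cheap $j$-dimensional trial space whose bound does not see $L$. The natural choice is to transplant the low-lying eigenfunctions of the translation-invariant single-well operator $-\Delta+|x|^s$ (acting on $L^2(\R^d)$, which has compact resolvent since $s\ge 2$, hence an orthonormal eigenbasis $\{\psi_k\}$ with eigenvalues $\lambda_k$ independent of $L$) to the left well: set $\phi_k(x):=\psi_k(x+\bx)$. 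Then the $\phi_k$ are orthonormal in $L^2(\R^d)$, and by construction $(-\Delta+V_\ell)\phi_k=\lambda_k\phi_k$.

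Let $V_j:=\mathrm{span}\{\phi_1,\dots,\phi_j\}$. For any normalized $\phi=\sum_{k=1}^j c_k\phi_k\in V_j$, the spectral theorem for $-\Delta+V_\ell$ on $V_j$ gives
\[
\langle\phi,(-\Delta+V_\ell)\phi\rangle=\sum_{k=1}^{j}|c_k|^2\lambda_k\le\lambda_j.
\]
Since $V_\mathrm{DW}(x)=\min\{V_\ell,V_r\}\le V_\ell$ pointwise and $V_\mathrm{DW}\ge 0$, this implies
$\langle\phi,(-\Delta+V_\mathrm{DW})\phi\rangle\le\lambda_j$. For the non-linear contribution, Young's inequality combined with $\|u_+\|_{L^2}=1$ yields
\[
0\le (w\ast|u_+|^2)(x)\le \|w\|_{L^\infty}\int_{\R^d}|u_+|^2=\|w\|_{L^\infty},
\]
so $\langle\phi,\lambda(w\ast|u_+|^2)\phi\rangle\le\lambda\|w\|_{L^\infty}$. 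Adding the two bounds,
\[
\sup_{\phi\in V_j,\,\|\phi\|_{L^2}=1}\langle\phi,h_\mathrm{DW}\phi\rangle\le\lambda_j+\lambda\|w\|_{L^\infty}=:C_j,
\]
and $C_j$ is manifestly $L$-independent. The min--max principle then gives $\mu_j\le C_j$.

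There is no real obstacle here: the only mild point is that the $\phi_k$ are \emph{not} eigenfunctions of $h_\mathrm{DW}$, so one must pass through the quadratic form and exploit the two sign inequalities $V_\mathrm{DW}\le V_\ell$ and $w\ast|u_+|^2\le\|w\|_{L^\infty}$ (both of which are $L$-independent). One could alternatively, and equivalently, take $V_j$ generated by translates of eigenfunctions of $-\Delta+|x|^s$ to the right well; either choice works.
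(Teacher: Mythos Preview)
Your proof is correct and follows essentially the same approach as the paper. The paper phrases it as the operator inequality $h_\mathrm{DW}\le h_r+C$ (using $V_\mathrm{DW}\le V_r$ and the uniform bound on $w*|u_+|^2$) and then invokes min--max to compare eigenvalues of the two operators, whereas you carry out the min--max comparison by hand via an explicit trial space of translated single-well eigenfunctions; the two arguments are interchangeable.
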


\begin{proof}	
	First we observe that, as operators,
	\begin{equation*}
	h_\mathrm{MF} \le h_r +C,
	\end{equation*}
	where $h_r$ is the one-well Hamiltonian from \eqref{eq:single well hamil}. Indeed, $w*|u_+|^2$ and $w*|u_r|^2$ are uniformly bounded by $\sup \vert w \vert$ since $|u_+|^2$ and $|u_r|^2$ are $L^2$-normalized. Moreover $V_{\mathrm{DM}}\le V_r$ by definition. By the min-max principle, this means that the $j$-th (ordered) eigenvalue of $h_\mathrm{MF}$ is bounded by the $j$-th (ordered) eigenvalue of $h_r$ plus a constant. However, the spectrum of $h_r$ does not depend on $L$, because $h_r$ coincides with the translation (by $-\mathbf{x}$) of a fixed Hamiltonian. 
\end{proof}

We will also need an analogous result on the uniformity of Sobolev norms of the eigenvectors of $h_\mathrm{MF}$. To this end, we start with the following

\begin{lemma}[\textbf{Estimate on $h^2_\mathrm{DW}$}]\mbox{} \label{lemma:delta2}\\
	We have the quadratic form bound
	\begin{equation*}
	\frac{1}{2}(-\Delta)^2\le h_\mathrm{DW}^2+C,
	\end{equation*}
	for a constant $C$ that does not depend on $L$.
\end{lemma}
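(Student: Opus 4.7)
My plan is to evaluate $\|\hDW \psi\|^2$ on an arbitrary $\psi$ in the form domain, expand the square, and integrate by parts to control the cross term. Setting $W := \VDW + \lambda w * |u_+|^2 \geq 0$ so that $\hDW = -\Delta + W$, one has
$$
\|\hDW \psi\|^2 \;=\; \|(-\Delta)\psi\|^2 \;+\; 2\,\mathrm{Re}\<-\Delta\psi, W\psi\> \;+\; \|W\psi\|^2.
$$
A single integration by parts gives
$$
2\,\mathrm{Re}\<-\Delta\psi, W\psi\> \;=\; 2\int W |\nabla \psi|^2 \;+\; \int \nabla W \cdot \nabla |\psi|^2,
$$
the first summand being $\geq 0$ since $W \geq 0$ and therefore droppable. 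The whole problem reduces to showing
$$
\int \nabla W \cdot \nabla |\psi|^2 \;\geq\; -\,\delta\,\|(-\Delta)\psi\|^2 \;-\; \tfrac{1}{2}\|W\psi\|^2 \;-\; C\|\psi\|^2
$$
for some fixed $\delta<1/2$ and $C$ independent of $L$, which combined with the above yields the claimed bound (in fact with constant $1-\delta$ rather than $1/2$).

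For the smooth convolution piece $W_1 := \lambda w * |u_+|^2$, I would push both derivatives onto $|\psi|^2$, $\int \nabla W_1 \cdot \nabla |\psi|^2 = -\int W_1 \,\Delta |\psi|^2$. The sup-norm bound $\|W_1\|_{L^\infty} \leq \lambda \|w\|_{L^\infty}$ is $L$-independent thanks to $\|u_+\|_{L^2}^2 = 1$, and using the identity $\Delta |\psi|^2 = 2 |\nabla \psi|^2 + 2\,\mathrm{Re}(\bar \psi \, \Delta \psi)$ together with $\int |\nabla \psi|^2 = \<\psi, -\Delta \psi\> \leq \|\psi\|\,\|(-\Delta)\psi\|$ and Cauchy--Schwarz on $\int |\psi||\Delta\psi|$ yields a bound of the form $\delta\|(-\Delta)\psi\|^2 + C_\delta \|\psi\|^2$. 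This step deliberately avoids any $L$-uniform control on $\|u_+\|_{H^2}$, which is in fact what Lemma~\ref{lemma:delta2} is meant to supply further down the line.

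For the potential piece $\VDW$, I would split $\int \nabla \VDW \cdot \nabla|\psi|^2$ into the half-spaces $\{x_1 > 0\}$ and $\{x_1 < 0\}$ and integrate by parts separately on each. Each bulk contribution is $-\int |\psi|^2 \Delta V_r$ respectively $-\int |\psi|^2 \Delta V_\ell$, and since $\Delta V_\ell = \Delta V_r = s(s+d-2)|\cdot|^{s-2}$ is nonnegative and dominated by $C(1 + V_\ell)$ respectively $C(1 + V_r)$ with $C = C(s,d)$ independent of $L$, these are $\geq -C\|\psi\|^2 - C \int \VDW |\psi|^2$. The last term is absorbed into $\tfrac{1}{2}\|W\psi\|^2 + C \|\psi\|^2$ by Cauchy--Schwarz, using $\|\VDW \psi\| \leq \|W\psi\| + \|W_1\|_\infty \|\psi\|$. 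The boundary contribution on $\{x_1 = 0\}$ takes the form $\int_{x_1 = 0} |\psi|^2 \bigl(\partial_1 V_\ell - \partial_1 V_r\bigr) \,d\sigma$, and a direct computation of the jump shows the integrand equals $sL\,(L^2/4 + |x_\perp|^2)^{(s-2)/2} \geq 0$, so this term has a favorable sign and is discarded. The main pitfall, and the reason I work half-space by half-space rather than distributionally, is that the naive identity $\int \nabla \VDW \cdot \nabla|\psi|^2 = -\int |\psi|^2 \Delta \VDW$ would place the large, $L$-dependent jump of $\partial_1 \VDW$ across $\{x_1 = 0\}$ as a \emph{negative} singular measure contribution to $\Delta \VDW$; read that way the critical $L$-dependent term looks disastrous, whereas the half-space reading reveals it as a bonus. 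Combining the three contributions and choosing $\delta$ small yields $\|\hDW\psi\|^2 \geq \tfrac{1}{2}\|(-\Delta)\psi\|^2 - C\|\psi\|^2$ with $C$ independent of $L$, which is the claim.
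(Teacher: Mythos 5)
Your proof is correct and takes a genuinely different route from the paper's. Both arguments start by expanding $\langle\psi,h_{\mathrm{DW}}^2\psi\rangle$ and integrating by parts once to produce the cross term $\int\nabla W\cdot\nabla|\psi|^2$ plus the droppable nonnegative term $2\int W|\nabla\psi|^2$. From there you diverge: the paper keeps $\nabla W$, bounds the cross term by Cauchy--Schwarz against $|\nabla\psi|$ and $|\nabla W||\psi|$, uses $-\Delta\leq\tfrac12\Delta^2+2$, and reduces to the pointwise inequality $W^2-|\nabla W|^2\geq -C$, checked by an algebraic computation in each half-space; controlling the resulting $|\nabla W_1|=|2\lambda\,w*(u_+\nabla u_+)|$ there needs an $L$-uniform bound on $\|u_+\|_{H^1}$, imported from Lemma~\ref{lemma:uniform_mu} (not $H^2$ as you suggest, so no circularity threatens the paper's argument either). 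You instead integrate by parts a second time and push both derivatives onto $|\psi|^2$: the $W_1$ piece then needs only $\|W_1\|_\infty\leq\lambda\|w\|_\infty$ from $\|u_+\|_{L^2}=1$, while the $\VDW$ piece trades $\nabla\VDW$ for $\Delta V_{\ell/r}\lesssim 1+V_{\ell/r}$ plus a boundary term across $\{x_1=0\}$, which you correctly compute to have favorable sign. The net effect is that your version is slightly more self-contained (no appeal to Lemma~\ref{lemma:uniform_mu}) at the price of tracking the kink of $\VDW$. One quibble with your editorial remark: the distributional reading is not actually disastrous. The singular part of $\Delta\VDW$ on $\{x_1=0\}$ is a \emph{negative} measure (concave kink), so $-\int|\psi|^2\Delta\VDW$ picks up a \emph{positive} contribution --- the two readings agree, though the half-space bookkeeping you chose is certainly less error-prone.
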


\begin{proof}
	Let $W=V_\mathrm{DW}+\lambda w*|u_+|^2$. For $\psi\in\mathcal{D}[h_\mathrm{DW}^2]$ with $\|\psi\|_{L^2}=1$ we have, after expanding the square and integrating by parts,
	\begin{equation*}
	\begin{split}
	\langle \psi,h^2_\mathrm{DW}\psi\rangle = \;&\langle\psi, \Delta^2\psi\rangle+\langle \psi,W^2\psi\rangle+\langle \nabla\psi,(\nabla W)\psi\rangle+2\langle\nabla\psi,W\nabla\psi\rangle+\langle\psi,(\nabla W)\nabla \psi\rangle\\
	\ge\;&\langle\psi, \Delta^2\psi\rangle+\langle \psi,W^2\psi\rangle+\langle \nabla\psi,(\nabla W)\psi\rangle+\langle\psi,(\nabla W)\nabla \psi\rangle,
	\end{split}
	\end{equation*}
	where for a lower bound we used $W\ge0$. By Cauchy-Schwarz we have
	\begin{equation*}
	\langle \nabla\psi,(\nabla W)\psi\rangle+\langle\psi,(\nabla W)\nabla \psi\rangle\ge -\langle\psi,(-\Delta)\psi\rangle-\langle\psi,|\nabla W|^2\psi\rangle,
	\end{equation*}
	and the further inequality $-\Delta \le \frac{1}{2}\Delta^2+2$ yields
	\begin{equation*}
	\langle \psi,h^2_\mathrm{DW}\psi\rangle\ge \frac{1}{2}\langle\psi,\Delta^2\psi\rangle+\big\langle\psi,\big(W^2-|\nabla W|^2-2\big)\psi\big\rangle.
	\end{equation*}
	The lemma follows from the claim 
	$$W^2-|\nabla W|^2\ge -C$$
	that we now prove. Let us consider the half-space $\{x_1\ge0\}$. Here,
	\begin{equation*}
	W^2(x)=|x-\bx|^{2s}+\big(\lambda w*|u_+|^2\big)^2+2\lambda |x-\bx|^sw*|u_+|^2
	\end{equation*}
	and
	\begin{equation*}
	|\nabla W(x)|^2=s^2|x-\bx|^{2s-2}+\big| 2\lambda  w*(u_+\nabla u_+)\big|^2+4\lambda s|x-x_N|^{s-1}\frac{x-\bx}{|x-\bx|}\cdot w*(u_+\nabla u_+).
	\end{equation*}
	Let us consider the difference $W^2-|\nabla W|^2$. For $W^2$ we will use the estimate
	\begin{equation*}
	W^2(x)\ge |x-\bx|^{2s}.
	\end{equation*}
	For the $\lambda^2$-term in $|\nabla W|^2$ we have, by Young's and H\"older's inequalities,
	\begin{equation*}
	-\big( 2\lambda  w*(u_+\nabla u_+)\big)^2\ge -4\lambda^2\|w\|_{L^\infty}^2\, \|u_+\|_{L^2}^2\,\|u_+\|_{H^1}^2 \ge -C,
	\end{equation*}
	because $\|u_+\|_{H^1}^2\le \mu_+ \le C$ by Lemma \ref{lemma:uniform_mu}. 	For the $\lambda$-term in $|\nabla W|^2$ we use Cauchy-Schwarz followed by Young and H\"{o}lder inequalities to get
	\begin{equation*}
	-4\lambda s|x-x_N|^{s-1}\frac{x-\bx}{|x-\bx|}\cdot w*(u_+\nabla u_+) \ge - \delta |x-\bx|^{2s-2}-C_\delta.
	\end{equation*}
	The three last inequalities imply
	\begin{equation*}
	W^2(x)-|\nabla W(x)|^2 \ge |x-\bx|^{2s} - (s^2+\delta)|x-\bx|^{2s-2}-C_\delta-C \ge -C'.
	\end{equation*}
	This concludes the proof.
\end{proof}

\begin{lemma}[\textbf{Uniform Sobolev regularity}] \label{lemma:uniform_Sobolev}\mbox{}\\
	 For each $j$, the $j$-th (normalized) eigenvector of $h_\mathrm{MF}$ 
	 satisfies
	\begin{equation}
	\|u_j\|_{H^2(\mathbb{R}^d)}\le C_j,\qquad \|u_j\|_{L^\infty(\mathbb{R}^d)}\le C_j,
	\end{equation}
	for a constant $C_j$ that does not depend on $L$.
\end{lemma}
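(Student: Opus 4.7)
The plan is to derive both bounds as essentially immediate consequences of the two preceding lemmas, with the Sobolev embedding providing the passage from $H^2$ to $L^\infty$. Since $u_j$ is a normalized eigenfunction of $h_\mathrm{MF}$, testing the quadratic-form bound of Lemma \ref{lemma:delta2} against $u_j$ turns $\langle u_j, h_\mathrm{MF}^2 u_j\rangle$ into $\mu_j^2$, and Lemma \ref{lemma:uniform_mu} keeps the latter $L$-independent. This gives uniform control on $\|\Delta u_j\|_{L^2}$, which combined with the normalization yields the $H^2$ bound; the $L^\infty$ bound then follows from the Sobolev embedding $H^2(\R^d) \hookrightarrow L^\infty(\R^d)$, valid in the regime $1\le d\le 3$ assumed throughout.

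More concretely, Lemma \ref{lemma:regularity} ensures $u_j\in C^\infty(\R^d)$, and the eigenvalue equation $h_\mathrm{MF} u_j = \mu_j u_j$ places $u_j$ in $\cD[h_\mathrm{MF}^2]$ with $\langle u_j, h_\mathrm{MF}^2 u_j\rangle = \mu_j^2\|u_j\|_{L^2}^2 = \mu_j^2$. Lemma \ref{lemma:delta2} (applied to $u_j$, identifying $h_\mathrm{MF}$ with $h_\mathrm{DW}$) then delivers
$$\tfrac{1}{2}\|\Delta u_j\|_{L^2(\R^d)}^2 = \tfrac{1}{2}\langle u_j, \Delta^2 u_j\rangle \le \langle u_j, h_\mathrm{MF}^2 u_j\rangle + C = \mu_j^2 + C,$$
with $C$ independent of $L$. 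Lemma \ref{lemma:uniform_mu} provides $\mu_j\le C_j$ uniformly in $L$, hence $\|\Delta u_j\|_{L^2}\le C_j'$, and combining with $\|u_j\|_{L^2}=1$ yields $\|u_j\|_{H^2(\R^d)}\le C_j$. The Sobolev embedding $H^2(\R^d)\hookrightarrow L^\infty(\R^d)$ for $d\le 3$ then gives $\|u_j\|_{L^\infty(\R^d)}\le C\|u_j\|_{H^2(\R^d)}\le C_j$, as claimed.

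There is no serious obstacle: the lemma is essentially a corollary of the two preceding ones. The dimensional restriction $d\le 3$ enters only at the final Sobolev step; any extension to higher $d$ would require iterating the bootstrap of Lemma \ref{lemma:regularity} once more (controlling $H^s$-norms for $s>d/2$) together with a corresponding bound on higher powers of $h_\mathrm{MF}$, which one could obtain by extending the argument of Lemma \ref{lemma:delta2}. Should the same statement be needed for eigenvectors of the single-well Hamiltonians $h_\ell$ and $h_r$, the identical argument applies, using a one-well analogue of Lemma \ref{lemma:delta2}; in fact the single-well case is simpler, as the spectra of $h_\ell$ and $h_r$ are $L$-independent by translation invariance, so one can bypass Lemma \ref{lemma:uniform_mu} altogether.
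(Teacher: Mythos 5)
Your proof is correct and follows essentially the same route as the paper: test the quadratic-form bound of Lemma \ref{lemma:delta2} on the eigenvector to reduce $\|\Delta u_j\|_{L^2}$ to $\mu_j^2+C$, invoke Lemma \ref{lemma:uniform_mu} for the $L$-independent eigenvalue bound, and conclude via the Sobolev embedding $H^2(\R^d)\hookrightarrow L^\infty(\R^d)$ for $d\le 3$. The extra remarks on higher dimensions and the single-well Hamiltonians are sensible but not needed.
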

\begin{proof}
By the Sobolev embedding
\begin{equation*}
\|f\|_{L^\infty(\mathbb{R}^d)}\le C \|f\|_{H^2(\mathbb{R}^d)},
\end{equation*}
that holds for $d=1,2,3$, the second inequality follows from the first one. To prove the first one, we recognize that
\begin{equation*}
\|u_j\|_{H^2}^2 = \| -\Delta u_j\|_{L^2}^2+\|u_j\|_{L^2}^2 \le \langle u_j,h_{\mathrm{MF}} ^2u_j\rangle +C,
\end{equation*}
where the second inequality follows from Lemma \ref{lemma:delta2}. Since $u_j$ is an eigenvector of $h_\mathrm{MF}$, we have
\begin{equation*}
\|u_j\|_{H^2}^2 \le \mu_j^2 +C \le C_j,
\end{equation*}
thanks to Lemma \ref{lemma:uniform_mu}.
\end{proof}

\subsection{Decay estimates}\label{sec:decay}

Key ingredients for all our estimates are the following estimates on the decay of the eigenvectors of $h_\mathrm{DW}$ far from $\mathbf{x}$ and $-\mathbf{x}$.

\begin{proposition}[\textbf{Decay estimates for eigenmodes}]\label{prop:decay_estimates}\mbox{}\\ 
Let $\varepsilon$ be an arbitrarily small parameter and $A_{\mathrm{DW}}$ be as in~\eqref{eq:A_DW}.
	\begin{itemize}
		\item\textbf{Integral bound.}
		Let $u_j$ be any eigenvector of $h_\mathrm{DW}$, corresponding to eigenvalue $\mu_j$ and with normalization $\|u_j\|_{L^2}=1$. There exists $C_{\varepsilon,j}>0$ (depending on $\varepsilon$ and $j$ but not on $L$) such that
		\begin{equation} \label{eq:integral_decay_bound}
		\int_{ \mathbb{R}^d} \left| \nabla \left(e^{(1-\varepsilon)A_\mathrm{DW}}u_j\right) \right|^2\,dx+	\int_{ \mathbb{R}^d}   e^{2(1-\varepsilon)A_\mathrm{DW}}| u_j |^2\,dx\le C_{\varepsilon,j}.
		\end{equation}
		\item\textbf{Pointwise bound.} Let $u_j$ be as above. For $R>0$ large enough, there exists $C_{\varepsilon,j}>0$ such that
		\begin{equation} \label{eq:pointwise_upper_bound}
		|u_j(x)|\le C_{\varepsilon,j} e^{-(1-\varepsilon)A_\mathrm{DW}(x)}
		\end{equation}
		for every $x$ such that $|x-\mathbf{x}|,|x+\mathbf{x}|\ge R$.
		\item\textbf{Pointwise bound for one-well modes.} For $R>0$ large enough, there exists $C_\varepsilon$ such that
		\begin{align} \label{eq:decay_u_r}
		u_{r}(x)\le\;& C_\varepsilon e^{-(1-\varepsilon)A_{}(x-\bx)}
		\end{align}
		for $|x-\mathbf{x}|\ge R$ and
		\begin{align}
		\label{eq:decay_u_ell}
		u_{\ell}(x)\le\;& C_\varepsilon e^{-(1-\varepsilon)A_{}(x+\bx)}
		\end{align}
		for $|x+\mathbf{x}|\ge R$.
	\end{itemize}
\end{proposition}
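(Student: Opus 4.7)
All three bounds will come from the classical Agmon method. The key observation is that $\phi_\varepsilon := (1-\varepsilon) A_{\mathrm{DW}}$ is Lipschitz on $\R^d$ (continuous across $\{x_1 = 0\}$ by construction) with
\[
|\nabla \phi_\varepsilon|^2 = (1-\varepsilon)^2 V_{\mathrm{DW}} \quad \text{almost everywhere},
\]
a direct consequence of $\nabla A(y) = \sqrt{V(y)}\, y/|y|$ and the definitions~\eqref{eq:Agmon}-\eqref{eq:A_DW}.

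\textbf{Integral bound.} Testing the eigenvalue equation $-\Delta u_j + (V_{\mathrm{DW}} + \lambda w *|u_+|^2) u_j = \mu_j u_j$ against $e^{2\phi_\varepsilon} \bar u_j$ and integrating by parts---a step requiring only the Lipschitz regularity of $\phi_\varepsilon$---yields
\[
\int |\nabla(e^{\phi_\varepsilon} u_j)|^2 + (2\varepsilon - \varepsilon^2) \int V_{\mathrm{DW}}\, e^{2\phi_\varepsilon}|u_j|^2 + \lambda \int (w*|u_+|^2)\, e^{2\phi_\varepsilon}|u_j|^2 = \mu_j \int e^{2\phi_\varepsilon}|u_j|^2.
\]
Since $V_{\mathrm{DW}}(x) \geq R^s$ on $\{|x \pm \bx|\geq R\}$, and $\mu_j \leq C_j$ uniformly in $L$ by Lemma~\ref{lemma:uniform_mu}, taking $R$ large enough (depending on $\varepsilon, j$ but not on $L$) lets me absorb the portion of the right-hand side supported in the exterior region into the $V_{\mathrm{DW}}$-term on the left. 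The remaining interior contribution is bounded by $C_{\varepsilon,j}$ using the $L^\infty$ estimate from Lemma~\ref{lemma:uniform_Sobolev} and the boundedness of $e^{2\phi_\varepsilon}$ on $\{|x \pm \bx|\leq R\}$. This delivers~\eqref{eq:integral_decay_bound} (using in addition $V_{\mathrm{DW}} \geq 1$ outside a compact set to recover the pure $L^2$ weight).

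\textbf{Pointwise bound.} To upgrade to~\eqref{eq:pointwise_upper_bound}, I would apply the integral estimate with $\varepsilon/2$ in place of $\varepsilon$, yielding $\int e^{2(1-\varepsilon/2) A_{\mathrm{DW}}}|u_j|^2 \leq C_{\varepsilon,j}$, and then invoke local elliptic regularity on balls adapted to the growth of $V_{\mathrm{DW}}$. Concretely, $w_j := e^{(1-\varepsilon) A_{\mathrm{DW}}} u_j$ satisfies a linear elliptic equation obtained by conjugating $h_{\mathrm{DW}} - \mu_j$ by the Agmon weight; on a ball $B(x_0, r_0)$ of radius $r_0 \sim V_{\mathrm{DW}}(x_0)^{-1/2}$ sitting in the exterior region, the drift and zero-order coefficients are approximately constant. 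Standard local Moser/Sobolev-type estimates (using $d\leq 3$, as in the bootstrap of Lemma~\ref{lemma:regularity}) then give $|w_j(x_0)| \leq C_{\varepsilon,j}\, \|w_j\|_{L^2(B(x_0, r_0))}$, and the gap between the exponents $(1-\varepsilon)$ and $(1-\varepsilon/2)$ bounds the right-hand side by $C_{\varepsilon,j}$. The one-well estimates~\eqref{eq:decay_u_r}-\eqref{eq:decay_u_ell} are obtained by running the same scheme for $h_r$ (resp.\ $h_\ell$), with $A_{\mathrm{DW}}$ replaced by $A(\cdot - \bx)$ (resp.\ $A(\cdot + \bx)$); the argument is slightly simpler there, since the Agmon weight is smooth and $\mu_r = \mu_\ell$ is $L$-independent by translation invariance.

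\textbf{Main obstacle.} The delicate step is the passage from integral to pointwise bound: because $V_{\mathrm{DW}}$ diverges at infinity, uniform local elliptic estimates on fixed-radius balls fail, and the argument must be carried on balls whose radius shrinks like $V_{\mathrm{DW}}^{-1/2}$. Carefully tracking the dependence of the Sobolev constants on this scaling and absorbing the resulting losses into the $\varepsilon/2$ slack built into the Agmon exponent is what closes the estimate uniformly in $L$. The Lipschitz (but not $C^1$) character of $A_{\mathrm{DW}}$ at the midplane $\{x_1 = 0\}$ is a minor additional nuisance for the conjugation, handled by working separately in each half-space and using the smoothness of $u_j$ across $\{x_1 = 0\}$ provided by Lemma~\ref{lemma:regularity}.
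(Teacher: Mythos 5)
Your treatment of the integral bound is essentially the paper's: the same Agmon conjugation, the same absorption of the interaction and $\mu_j$ terms using $V_{\mathrm{DW}}\geq R^s$ on the exterior region and $\mu_j\leq C_j$ from Lemma~\ref{lemma:uniform_mu}. One technical point you elide: the phrase ``integrating by parts---a step requiring only the Lipschitz regularity of $\phi_\varepsilon$'' is too quick, because a priori $e^{2\phi_\varepsilon}u_j$ could grow at infinity and the boundary term is not obviously zero. The paper handles this in Lemma~\ref{lemma:first_agmon_lemma} by testing instead against $\theta_j e^{2\Phi_k}u$ with compactly supported cutoffs $\theta_j$ and truncated weights $\Phi_k=\min\{\Phi,k\}$, then letting $j\to\infty$ and $k\to\infty$ with monotone convergence and Fatou. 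Your argument needs the same truncation to be complete.

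For the pointwise bound your route is genuinely different and noticeably heavier than what the paper does, and the ``main obstacle'' you identify is a bit of a red herring. You conjugate by the Agmon weight and then run local elliptic estimates on shrinking balls of radius $\sim V_{\mathrm{DW}}(x_0)^{-1/2}$, tracking the $V_{\mathrm{DW}}(x_0)^{d/4}$ loss from the Sobolev constant and absorbing it into the $\varepsilon/2$ slack; this can be made to work since the Agmon weight decays super-polynomially, but it involves a fair amount of bookkeeping (drift terms, $\Delta A_{\mathrm{DW}}$, scaling of Moser constants). The paper instead observes that, for $|x\pm\bx|\geq R$ with $R$ large enough, the total potential $W=V_{\mathrm{DW}}+\lambda w*|u_+|^2-\mu_j$ is nonnegative, so
\[
\Delta|u_j|^2 \;=\; 2|\nabla u_j|^2 + 2W|u_j|^2 \;\geq\; 0 ,
\]
i.e.\ $|u_j|^2$ is subharmonic in the exterior region. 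The mean value inequality for subharmonic functions then gives $|u_j(x)|^2\leq |B_x(1)|^{-1}\int_{B_x(1)}|u_j|^2$ with a fixed radius and a universal constant---no scaling of Sobolev constants, no elliptic regularity theory. Multiplying and dividing by the Agmon weight inside the integral and controlling its variation on the unit ball (at the cost of replacing $\varepsilon$ by a slightly larger $\varepsilon'$) then directly converts the integral bound into the pointwise one. The moral: the unboundedness of $V_{\mathrm{DW}}$ is what makes Moser-type constants on fixed balls blow up, but it is exactly what makes the positivity $W\geq 0$ available, and positivity turns the problem into a one-line subharmonicity observation. Your approach is valid in principle but substantially longer; if you write it out, make sure to quantify the ball radius, the rescaled elliptic constants, and the absorption of the polynomial loss explicitly.
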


We start the proof with the following Lemma.

\begin{lemma}[\textbf{Integral decay bounds}]\label{lemma:first_agmon_lemma}\mbox{}\\
	Let $\Phi$ be locally Lipschitz and let its gradient be defined as the $L^\infty$ limit of a mollified sequence $\nabla \Phi_\varepsilon$. Let moreover $u$ be an eigenvector of $h_\mathrm{DW}$, corresponding to the eigenvalue $\mu$ and with normalization $\|u\|_{L^2}=1$. Define the total potential $W:= V_\mathrm{DW}+\lambda w*|u_+|^2-\mu$ and assume that $W \geq |\nabla \Phi|^2$ outside of a compact set. Then
	\begin{equation}
	\int_{\mathbb{R}^d} \left| \nabla \left( e^{\Phi} u\right) \right|^2\,dx+ \int_{ \mathbb{R}^d} \left( W-\left| \nabla \Phi\right|^2\right)e^{2\Phi}|u|^2\,dx \leq 0.
	\end{equation}
\end{lemma}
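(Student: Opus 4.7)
The statement is the classical Agmon integration-by-parts identity applied to the distributional eigenvalue equation $-\Delta u+Wu=0$. The inequality (rather than equality) will come from the limiting procedure required to handle a possibly unbounded $\Phi$.

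My plan is to test the eigenvalue equation against $\chi_R^2\,e^{2\Phi_M}\,u$, where $\chi_R\in C_c^\infty(\mathbb{R}^d)$ is a standard cutoff with $\chi_R\equiv1$ on $B_R$, $\mathrm{supp}\,\chi_R\subset B_{2R}$, $|\nabla\chi_R|\leq C/R$, and $\Phi_M:=\min(\Phi,M)$ is a truncation keeping $e^{\Phi_M}$ bounded and satisfying $|\nabla\Phi_M|\leq|\nabla\Phi|$ a.e. Because $\Phi$ is only locally Lipschitz I would first regularise $\Phi_M$ by a standard mollification and pass to the mollification limit at fixed $M,R$; the uniform $L^\infty$ convergence of the mollified gradients (which is the assumption of the lemma) makes this step routine. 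The resulting test function is then bounded with compact support, so by the $H^2$-regularity of $u$ from Lemma~\ref{lemma:uniform_Sobolev} the integration by parts against the Laplacian is fully justified. Completing the square with the pointwise identity
\[
|\nabla(\chi_R e^{\Phi_M} u)|^2 = \chi_R^2 e^{2\Phi_M}\bigl(|\nabla u|^2 + 2u\nabla\Phi_M\!\cdot\!\nabla u + |\nabla\Phi_M|^2 u^2\bigr) + \text{cross terms in } \nabla\chi_R,
\]
I then arrive at
\[
\int |\nabla(\chi_R e^{\Phi_M} u)|^2\,dx + \int \chi_R^2\bigl(W-|\nabla\Phi_M|^2\bigr)e^{2\Phi_M}|u|^2\,dx = \mathrm{Err}_R,
\]
where the remainder $\mathrm{Err}_R$ consists of terms supported in the annulus $R\leq|x|\leq 2R$ carrying a factor $|\nabla\chi_R|\leq C/R$. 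Cauchy--Schwarz combined with the uniform bounds $\|u\|_{L^2}+\|\nabla u\|_{L^2}\leq C$ from Lemma~\ref{lemma:uniform_Sobolev}, and with $e^{\Phi_M}\leq e^M$, gives $\mathrm{Err}_R\to 0$ as $R\to\infty$ and hence the identity
\[
\int |\nabla(e^{\Phi_M}u)|^2\,dx + \int(W-|\nabla\Phi_M|^2)e^{2\Phi_M}|u|^2\,dx = 0.
\]

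Finally I would send $M\to\infty$. On $\{\Phi<M\}$ one has $\Phi_M=\Phi$ and $\nabla\Phi_M=\nabla\Phi$, while on $\{\Phi\geq M\}$ the truncation kills the gradient; consequently both $|\nabla\Phi_M|^2 e^{2\Phi_M}|u|^2$ and, on the region $\{W\geq 0\}$, the quantity $W\,e^{2\Phi_M}|u|^2$ are pointwise monotone increasing in $M$ and converge to $|\nabla\Phi|^2 e^{2\Phi}|u|^2$ and $W\,e^{2\Phi}|u|^2$ respectively. Writing $W=W_+-W_-$, the negative contribution $\int W_- e^{2\Phi_M}|u|^2\,dx$ is compactly supported --- since $V_{\mathrm{DW}}\to\infty$ and $\mu$ is bounded by Lemma~\ref{lemma:uniform_mu} --- hence uniformly bounded in $M$. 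Monotone convergence on these positive-integrand pieces together with Fatou's lemma on $\int|\nabla(e^{\Phi_M}u)|^2\,dx$ then yields the desired inequality; it is precisely the use of Fatou that breaks the equality into an inequality.

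The main obstacle lies in this last limiting step: one must verify that the negative part $W_-$ is compactly supported uniformly, so that the hypothesis $W\geq|\nabla\Phi|^2$ off a compact set is actually useful, and that the monotone/dominated convergence arguments apply despite the fact that $|\nabla\Phi|$ may itself be unbounded. The uniform estimates gathered in Subsection~\ref{subsect:regularity} (in particular Lemmas~\ref{lemma:uniform_mu} and~\ref{lemma:uniform_Sobolev}) make this tractable; the earlier $R\to\infty$ step is more routine once the bounded truncation $\Phi_M$ is in place.
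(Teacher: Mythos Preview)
Your proposal is correct and follows essentially the same route as the paper: truncate $\Phi$ to $\Phi_M=\min(\Phi,M)$, introduce a spatial cutoff to justify the integration by parts, send the cutoff to $1$ to obtain the exact identity at level $M$, and then pass to the limit $M\to\infty$ via Fatou to turn the identity into the stated inequality. The only differences are cosmetic: the paper uses the cutoff $\theta_j$ linearly (testing against $\theta_j e^{2\Phi_k}u$) rather than your squared version $\chi_R^2 e^{2\Phi_M}u$, and it does not bother with the preliminary mollification of $\Phi_M$. One small remark: with your squared cutoff the error term $\mathrm{Err}_R$ actually carries a factor $\nabla\Phi_M$ rather than $\nabla u$ (the $\nabla u$ contributions cancel), so the relevant a priori bound is $\int V_{\mathrm{DW}}|u|^2<\infty$ rather than $\|\nabla u\|_{L^2}$; the paper's linear cutoff produces instead the term $\int e^{2\Phi_k}u\,\nabla\theta_j\cdot\nabla u$, controlled directly by $\|u\|_{H^1}$. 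Either way the error vanishes as $R\to\infty$.
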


This is very much in the spirit of~\cite[Theorem~1.5]{Agmon-82}, \cite[Lemma 2.3]{HelSjo-85} or \cite[Theorem 3.1.1]{Helffer-88}.

\begin{proof}
	To avoid a number of boundary terms that would arise after integration by parts, let us introduce the sequence of smooth localization functions
	\begin{equation*}
	\theta_j(x)=\begin{cases}
	1 \quad& |x|\leq j\\
	0 \quad& |x| \geq 2j\\
	\frac{e^{\frac{1}{1-|x|/j}}}{e^{\frac{1}{1-|x|/j}}+e^{\frac{j}{|x|}}}\quad& j\le |x| \le 2j.
	\end{cases}
	\end{equation*}
	Since $\theta_j$ is obtained by dilating a fixed function by a factor $j$, we have
	\begin{equation*}
	\left\|\nabla \theta_j\right\|_{L^\infty(\mathbb{R}^d)}\le \frac{C}{j},\quad\left\|\Delta \theta_j\right\|_{L^\infty(\mathbb{R}^d)} \le \frac{C}{j^2}.
	\end{equation*}
	The localization function $\theta_j$ will tend to one pointwise as $j\to\infty$, thus yielding integrals on the whole $\mathbb{R}^d$, while the terms depending on its derivatives will vanish thanks to the above bounds. We further define, for $k\in\mathbb{N}$,
	\begin{equation*}
	\Phi_k(x):=\min\{\Phi(x),k\}.
	\end{equation*}
	This is a uniformly Lipschitz version of $\Phi_k$, which tends to $\Phi$ as $k\to\infty$.
	
	By definition the function $u$ satisfies
	\begin{equation*}
	\left( -\Delta+W\right)u=0.
	\end{equation*}
	Recall that we fixed all phases so as to have only real-valued eigenvectors of $h_\mathrm{DW}$, so $u$ is real-valued.
	We multiply the above equation by $\theta_j e^{2\Phi_k}u$, and integrate by parts. We get
	\begin{equation*}
	\begin{split}
	0=\;&\int_{\mathbb{R}^d} \theta_j e^{\Phi_k} \nabla \left( e^{\Phi_k}u\right)\cdot \nabla u\,dx+ \int_{\mathbb{R}^d} \theta_j e^{2\Phi_k}u\nabla \Phi_k\cdot \nabla u\,dx \\
	&+\int_{ \mathbb{R}^d} e^{2\Phi_k} u \nabla \theta_j\cdot \nabla u\,dx+\int_{ \mathbb{R}^d}\theta_je^{2\Phi_k}|u|^2W\,dx.
	\end{split}
	\end{equation*}
	By using Leibniz rule in the first term in the right hand side we get
	\begin{equation*}
	\begin{split}
	0=\;&\int_{\mathbb{R}^d} \theta_j \left|\nabla \left( e^{\Phi_k}u\right)\right|^2\,dx -\int_{\mathbb{R}^d} \theta_j e^{\Phi_k} u \nabla \left( e^{\Phi_k}u\right)\cdot \nabla \Phi_k\,dx+\int_{\mathbb{R}^d} \theta_j e^{2\Phi_k}u\nabla \Phi_k\cdot \nabla u\,dx \\
	&+\int_{ \mathbb{R}^d} e^{2\Phi_k} u \nabla \theta_j\cdot \nabla u\,dx+\int_{ \mathbb{R}^d}\theta_je^{2\Phi_k}|u|^2W\,dx\\
	=\;&\int_{\mathbb{R}^d} \theta_j \left|\nabla \left( e^{\Phi_k}u\right)\right|^2\,dx -\int_{\mathbb{R}^d} \theta_j e^{2\Phi_k} u^2 \left|\nabla \Phi_k\right|^2\,dx\\
	&+\int_{ \mathbb{R}^d} e^{2\Phi_k} u \nabla \theta_j\cdot \nabla u\,dx+\int_{ \mathbb{R}^d}\theta_je^{2\Phi_k}|u|^2W\,dx,
	\end{split}
	\end{equation*}
	which is rewritten as
	\begin{equation*}
	\int_{\mathbb{R}^d} \theta_j \left|\nabla \left( e^{\Phi_k}u\right)\right|^2\,dx + \int_{ \mathbb{R}^d}\theta_j\left(W-\left|\nabla \Phi_k\right|^2\right)e^{2\Phi_k}|u|^2\,dx = \int_{ \mathbb{R}^d} e^{2\Phi_k} u \nabla \theta_j\cdot \nabla u\,dx.
	\end{equation*}
	We need to show that the term in the right hand side converges to zero as $j\to\infty$, and that the quantity in the left hand side is controlled when $j\to\infty$ followed by $k\to\infty$.
	
	First,
	\begin{equation*}
	\left|\int_{ \mathbb{R}^d} e^{2\Phi_k} u \nabla \theta_j\cdot \nabla u\,dx \right| \le C_k \|\nabla \theta_j\|_{L^\infty(\mathbb{R}^d)}\int_{ \mathbb{R}^d} |\nabla u|\,|u|\,dx \le \frac{C_k}{j} \|u\|_{L^2}\|u\|_{H^1}.
	\end{equation*}
	This implies
	\begin{equation*}
	\lim_{j\to\infty}\int_{\mathbb{R}^d} \theta_j \left|\nabla \left( e^{\Phi_k}u\right)\right|^2\,dx + \int_{ \mathbb{R}^d}\theta_j\left(W-\left|\nabla \Phi_k\right|^2\right)e^{2\Phi_k}|u|^2\,dx=0,
	\end{equation*}
	which, by monotone convergence, means
	\begin{equation*}
	\int_{\mathbb{R}^d} \left|\nabla \left( e^{\Phi_k}u\right)\right|^2 \,dx+ \int_{ \mathbb{R}^d}\left(W-\left|\nabla \Phi_k\right|^2\right)e^{2\Phi_k}|u|^2\,dx=0.
	\end{equation*}
	Since $\Phi_k\to\Phi$ pointwise as $k\to\infty$, Fatou's Lemma yields the desired result.
\end{proof}

We are now ready to provide the

\begin{proof}[Proof of Proposition \ref{prop:decay_estimates}]
	We will apply Lemma \ref{lemma:first_agmon_lemma} and show how to recover \eqref{eq:integral_decay_bound}. We fix a constant $\kappa>0$, and consider the set
	\begin{equation*}
	\begin{split}
	\Omega_\kappa:=\;&\{ x\in\mathbb{R}^d\;|\; \left(2\varepsilon-\varepsilon^2\right)V_\mathrm{DW}-\mu_j \ge \kappa \}\\
	\equiv \;&\left\{ x\in\mathbb{R}^d\;|\;\begin{cases}
	|x-\mathbf{x}|\ge \left( (\kappa+\mu_j)/(2\varepsilon-\varepsilon^2) \right)^{1/s},\quad x_1\ge0\\
	|x+\mathbf{x}|\ge \left( (\kappa+\mu_j)/(2\varepsilon-\varepsilon^2) \right)^{1/s},\quad x_1\le0
	\end{cases} \right\}\\
	 \Omega_\kappa^c=\;&\mathbb{R}^d\setminus \Omega_\kappa.
	\end{split}
	\end{equation*}
	We also define, for any $\varepsilon<1/2$,
	\begin{equation*}
	\Phi=(1-\varepsilon)A_\mathrm{DW},
	\end{equation*}
	so that
	\begin{equation*}
	|\nabla \Phi|^2=(1-\varepsilon)^2V_\mathrm{DW}.
	\end{equation*}
	Notice that the function $(2\varepsilon-\varepsilon^2)V_\mathrm{DW}-\mu_j$ appearing in the definition of $\Omega_\kappa$ is smaller than $W-|\nabla\Phi|^2$, where $W=V_\mathrm{DW}+\lambda w*|u_+|^2-\mu_j$. As a consequence, $W\ge\kappa$ on the whole $\Omega_\kappa$.
	
	We thus have
	\begin{equation*}
	\begin{split}
	\int_{\mathbb{R}^d}& \left| \nabla \left( e^{\Phi} u_j\right) \right|^2\,dx+ \kappa \int_{ \Omega_k} e^{2\Phi}|u_j|^2\,dx \\
	\leq\;& \int_{\mathbb{R}^d} \left| \nabla \left( e^{\Phi} u_j\right) \right|^2\,dx+  \int_{ \Omega_k} \left( W-|\nabla \Phi|^2 \right)e^{2\Phi}|u_j|^2\,dx\\
	\le\;&\int_{\mathbb{R}^d} \left| \nabla \left( e^{\Phi} u_j\right) \right|^2\,dx+  \int_{ \mathbb{R}^d} \left( W-|\nabla \Phi|^2 \right)e^{2\Phi}|u_j|^2\,dx-\int_{ \Omega_k^c} \left( W-|\nabla \Phi|^2 \right)e^{2\Phi}|u_j|^2\,dx,
	\end{split}
	\end{equation*}
	and, by Lemma \ref{lemma:first_agmon_lemma} and the inequality $W-|\nabla\Phi|^2\ge -\mu_j$, we get
	\begin{equation*}
	\begin{split}
	\int_{\mathbb{R}^d} \left| \nabla \left( e^{\Phi} u_j\right) \right|^2\,dx+ \kappa \int_{ \Omega_k} e^{2\Phi}|u_j|^2 \,dx\le\;&\mu_j \int_{ \Omega_k^c} e^{2\Phi}|u_j|^2\,dx.
	\end{split}
	\end{equation*}
	This in turn implies
	\begin{equation*}
	\begin{split}
	\int_{\mathbb{R}^d} \left| \nabla \left( e^{\Phi} u_j\right) \right|^2\,dx+ \kappa \int_{ \mathbb{R}^d} e^{2\Phi}|u_j|^2\,dx \le\;&\left(\mu_j+\kappa \right) \int_{ \Omega_k^c} e^{2\Phi}|u_j|^2\,dx.
	\end{split}	
	\end{equation*}
	Since $e^{2\Phi}$ is easily seen to be bounded by a $\varepsilon$-dependent constant on $\Omega_\kappa^c$, the above inequality implies \eqref{eq:integral_decay_bound} after choosing, for example, $\kappa=1$. 
	
	Let us now prove \eqref{eq:pointwise_upper_bound}. The function $|u_j|^2$ satisfies
	\begin{equation*}
	\Delta |u_j|^2=2|\nabla u_j|^2+ 2\left(V_\mathrm{DW}+\lambda w*|u_+|^2 -\mu_j\right)|u_j|^2,
	\end{equation*}
	and therefore
	\begin{equation*}
	\left(\Delta|u_j|^2\right)(x) \ge 0
	\end{equation*}
	for $|x-\mathbf{x}|,|x+\mathbf{x}|\ge R$ with $R$ large enough. Hence, by the mean value property for subharmonic functions (see, e.g., \cite[Theorem 2.1]{GilTru-01} or~\cite[Section~9.3]{LieLos-01}),
	\begin{equation*}
	|u_j(x)|^2\le \frac{1}{\mathrm{Vol}B_x(1)}\int_{B_x(1)} |u_j(y)|^2\,dy,
	\end{equation*}
	where $B_x(1)$ is the ball of radius $1$ centered at $x$ (assume $R$ is large enough so that $u_j$ is subharmonic on the whole $B_x(1)$).
	We multiply and divide by $e^{2(1-\varepsilon)A_\mathrm{DW}}$ inside the integral and use the Taylor-like expansion
	\begin{equation*}
	\left| e^{-2(1-\varepsilon)A_\mathrm{DW}(y)}-e^{-2(1-\varepsilon)A_\mathrm{DW}(x)}\right|\le Ce^{-2(1-\varepsilon')A_\mathrm{DW}(x)}  \qquad y\in B_x(1),
	\end{equation*}
	which holds for some $\varepsilon<\varepsilon'<1/2$ used to absorb the growth of the gradient.
	This gives
	\begin{equation*}
	|u_j(x)|^2\le C e^{-2(1-\varepsilon')A_\mathrm{DW}(x)}\int_{B_x(1)} |u_j(y)|^2 e^{2(1-\varepsilon)A_\mathrm{DW}(y)}\,dy.
	\end{equation*}
	The remaining integral is bounded by a constant thanks to \eqref{eq:integral_decay_bound}, and we thus get
	\begin{equation*}
	|u_j(x)|\le C_{\varepsilon,u}e^{-(1-\varepsilon')A_\mathrm{DW}(x)}.
	\end{equation*}
	This is precisely of the form \eqref{eq:pointwise_upper_bound} after a redefinition of the constants. The bounds for $u_r$ and $u_\ell$ are obtained through similar arguments, which we do not reproduce.
\end{proof}

The above allows to efficiently bound most terms that have to do with the tunneling effect in the sequel. A list of such is provided in Appendix~\ref{sect:app}.

\subsection{First approximations}


An important ingredient for the sequel is a first approximation of $u_+$ in terms of functions localized in the left or right wells:

\begin{proposition}[\textbf{First properties of $u_+$ and $u_-$.}]\label{prop:first_convergence}\mbox{}\\
Let $\chi_{x_1\ge0},\chi_{x_1\le0}$ be a smooth partition of unity such that
	\begin{equation*}
	\begin{split}
	\chi_{x_1\ge0}^2+\chi_{x_1\le0}^2=1,& \qquad\qquad\qquad \chi_{x_1\ge0}(x)=\chi_{x_1\le0}(-x_1,x_2,\dots,x_d),\\
	\chi_{x_1\ge0}(x)=0\quad\text{on }\{x_1\le -C\},&\qquad\qquad\qquad \|\nabla \chi_{x_1\ge0}\|_\infty\le C.
	\end{split}
	\end{equation*}
	Then, with $u_\ell$ and $u_r$ the left and right Hartree minimizers solving~\eqref{eq:Hartree left right}, and $T$ the tunneling parameter \eqref{eq:tunnel},
	\begin{equation} \label{eq:l_r_L2_norm_u+}
	\begin{split}
	\big\|\chi_{x_1\ge0} u_+-u_r\big\|_{L^2}\le\;&  C_\varepsilon T^{1/2 -\varepsilon} \\ 
	\big\|\chi_{x_1\le0} u_+-u_\ell\big\|_{L^2} \le\;& C_\varepsilon T^{1/2 -\varepsilon},
	\end{split}
	\end{equation}
	and, for an appropriate choice of the phase of $u_-$,
	\begin{equation} \label{eq:l_r_L2_norm_u-}
	\begin{split}
	\big\|\chi_{x_1\ge0} u_--u_r\big\|_{L^2}\le\;&  C_\varepsilon T^{1/4 -\varepsilon} \\ 
	\big\|\chi_{x_1\le0} u_- +u_\ell\big\|_{L^2} \le\;& C_\varepsilon T^{1/4 -\varepsilon}.
	\end{split}
	\end{equation}
\end{proposition}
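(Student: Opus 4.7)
The strategy is to match upper and lower bounds on $E_\mathrm{DW}$ (resp.\ $\mu_-$) against the single-well quantities $2E_\ell$ (resp.\ $\mu_\ell$), and then convert energy proximity into $L^2$-proximity via coercivity at the single-well minimizer.

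For the upper bounds I would use the trial states $v_\pm := (u_\ell \pm u_r)/\|u_\ell \pm u_r\|_{L^2}$, which are respectively symmetric and antisymmetric under $x_1\mapsto -x_1$. Since $u_+$ is symmetric, $v_-$ is automatically orthogonal to it, so that $\mu_- \le \langle v_-, h_\mathrm{DW} v_-\rangle$ by the min--max principle. Expanding $\mathcal{E}_\mathrm{DW}[v_+]$ and $\langle v_-, h_\mathrm{DW} v_-\rangle$ term by term, every crossed contribution --- the overlap $\langle u_\ell, u_r\rangle$, the cross-terms $\int \nabla u_\ell \cdot \nabla u_r$ and $\int V_\mathrm{DW}\, u_\ell u_r$, the misfit $\int (V_\ell - V_\mathrm{DW})|u_\ell|^2$, and the crossed interaction integrals --- is $O(T^{1-\varepsilon})$ by Proposition~\ref{prop:decay_estimates} together with the tunneling estimates collected in Appendix~\ref{sect:app}. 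This yields $E_\mathrm{DW} \le 2E_\ell + O(T^{1-\varepsilon})$ and $\mu_- \le \mu_\ell + O(T^{1-\varepsilon})$.

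For the $u_+$ lower bound I apply IMS localization with the partition $\chi_{x_1\ge 0}^2 + \chi_{x_1\le 0}^2 = 1$: the localization error $\int(|\nabla\chi_\ell|^2 + |\nabla\chi_r|^2)|u_+|^2$ sits in the region where the pointwise decay of Proposition~\ref{prop:decay_estimates} forces $|u_+|^2 \le C_\varepsilon T^{1-\varepsilon}$; $V_\mathrm{DW}$ coincides with $V_\ell$ on $\{x_1\le 0\}$ and with $V_r$ on $\{x_1\ge 0\}$, the mismatch living only in a strip of width $O(1)$ around $x_1 = 0$ where $u_+$ is exponentially small; and the nonlinear cross-terms in the interaction such as $\iint w(x-y)|\chi_\ell u_+|^2(x)|\chi_r u_+|^2(y)\,dxdy$ are negligible by the compact support of $w$ combined with decay. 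The outcome is
\[
E_\mathrm{DW} = \mathcal{E}_\mathrm{DW}[u_+] \ge \mathcal{E}_\ell[\chi_\ell u_+] + \mathcal{E}_r[\chi_r u_+] - O(T^{1-\varepsilon}).
\]
By the symmetry of $u_+$ and of $\chi_\ell, \chi_r$, the pieces $\chi_\ell u_+$ and $\chi_r u_+$ each have $L^2$-mass exactly $1/2$, so each term is bounded below by $E_\ell = E_r$; matching with the upper bound forces $\mathcal{E}_r[\chi_r u_+] - E_r = O(T^{1-\varepsilon})$. A Taylor expansion of $\mathcal{E}_r$ around its unique positive minimizer $u_r$ on the mass-$1/2$ shell, combined with the $L$-independent spectral gap of $h_r$ above its ground state and the positivity $\widehat w \ge 0$, provides quadratic coercivity $\mathcal{E}_r[u] - E_r \ge c \|u - u_r\|_{L^2}^2$ for $u$ close to $u_r$. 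Since $\chi_r u_+ \ge 0$ and $u_r > 0$ there is no phase ambiguity, and~\eqref{eq:l_r_L2_norm_u+} follows.

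The $u_-$ case carries the main technical difficulty. The IMS decomposition of $\mu_- = \langle u_-, h_\mathrm{DW} u_-\rangle$ produces an additional discrepancy compared to the $u_+$ case: the nonlinear potential $\lambda w * |u_+|^2$ inside $h_\mathrm{DW}$ must be replaced by $\lambda w * |u_r|^2$ (resp.\ $\lambda w * |u_\ell|^2$) on the right (resp.\ left) half-space in order to reconstruct $h_r$ (resp.\ $h_\ell$). The cost of this replacement is controlled by $\bigl\||u_+|^2 - |u_\ell|^2 - |u_r|^2\bigr\|_{L^1}$, which at this stage is only $O(T^{1/2 - \varepsilon})$ --- a full square root worse than the other tunneling errors --- by the $u_+$ bound just proven and Cauchy--Schwarz. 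Using the $L$-independent spectral gap of $h_\ell$ above its ground state as \emph{linear} coercivity (in place of the Hartree Hessian), one arrives at
\[
c\,\bigl(\|\chi_r u_- - 2\langle \chi_r u_-, u_r\rangle\, u_r\|_{L^2}^2 + \|\chi_\ell u_- - 2\langle \chi_\ell u_-, u_\ell\rangle\, u_\ell\|_{L^2}^2\bigr) \le \mu_- - \mu_\ell + O(T^{1/2-\varepsilon}) = O(T^{1/2-\varepsilon}),
\]
whose square root gives the weaker rate $T^{1/4 - \varepsilon}$ of \eqref{eq:l_r_L2_norm_u-}. Fixing the phase of $u_-$ by requiring $\langle \chi_r u_-, u_r\rangle > 0$, and noting that by antisymmetry $\langle \chi_\ell u_-, u_\ell\rangle < 0$, with both $2\langle \chi_r u_-, u_r\rangle = 1 + o(1)$ and $2\langle \chi_\ell u_-, u_\ell\rangle = -1 + o(1)$, then yields the signed identities as stated.
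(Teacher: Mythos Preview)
Your approach is essentially the paper's: trial states $(u_\ell\pm u_r)$ for the upper bounds, IMS localization for the lower bounds, and coercivity (nonlinear for $u_+$, linear via the gap of $h_r$ for $u_-$) to convert energy estimates into $L^2$ ones.

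Two points need correction. First, your claim $\mu_-\le\mu_\ell+O(T^{1-\varepsilon})$ is too optimistic. Expanding $\langle v_-,h_\mathrm{DW}v_-\rangle$ produces the diagonal term $\lambda\int(w*|u_+|^2)|u_r|^2$, and comparing it to $\mu_r=2\langle u_r,h_r u_r\rangle$ requires the \emph{same} replacement $w*|u_+|^2\to w*|u_r|^2$ that you correctly flag in the lower bound; this is not a ``crossed'' term and costs $O(T^{1/2-\varepsilon})$ already at the upper-bound stage. The conclusion is unaffected since both bounds then carry the same rate.

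Second, you invoke ``antisymmetry'' of $u_-$ to fix the sign in the left well, but the odd parity of $u_-$ is part of what the proposition establishes, not an input. The paper's argument runs as follows: $u_-$ has definite parity (the Hamiltonian commutes with the reflection), and the minimization in the linear stability estimate is attained at $\theta\in\{0,\pi\}$ since everything is real; fix the overall phase so that $\chi_{x_1\ge0}u_-\approx u_r$, and then test the orthogonality $0=\langle u_+,u_-\rangle=\langle\chi_{x_1\ge0}u_+,\chi_{x_1\ge0}u_-\rangle+\langle\chi_{x_1\le0}u_+,\chi_{x_1\le0}u_-\rangle$ against the already-proven approximation~\eqref{eq:l_r_L2_norm_u+} to force $\chi_{x_1\le0}u_-\approx -u_\ell$ rather than $+u_\ell$.
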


The approximation \eqref{eq:l_r_L2_norm_u-} has a worse rate than \eqref{eq:l_r_L2_norm_u+}, and therefore it does not allow to directly deduce \eqref{eq:L2_convergence} with the desired rate yet. It follows from the above and standard elliptic regularity estimates that 
\begin{align}\label{eq:strong CV}
 \chi_{x_1\ge0} u_+ (x-\bx) &\underset{L\to\infty}{\to} u_0 \nonumber\\
\chi_{x_1\le 0} u_+ (x + \bx) &\underset{L\to\infty}{\to} u_0 \nonumber\\
\chi_{x_1\ge0} u_- (x-\bx) &\underset{L\to\infty}{\to} u_0 \nonumber\\
\chi_{x_1\le0} u_- (x+\bx) &\underset{L\to\infty}{\to} - u_0 ,
\end{align}
where $u_0$ is the minimizer of the one-well Hartree functional (obtained by setting $\bx= 0$ in the definition~\eqref{eq:left right Hartree} of the left and right well functionals). For any fixed smooth bounded set $\Omega$ the convergence is strong in any Sobolev space $H^s(\Omega)$ and (by Sobolev embeddings) in any H\"older space $\cC^{\alpha} (\Omega)$. Indeed, repeatedly differentiating the elliptic PDEs satisfied by $u_+$ and $u_-$ one obtains uniform bounds in any Sobolev space. Using local compact embeddings one can then extract convergent subsequences in these spaces, and the above Proposition uniquely identifies the limit.
 
 We will prove Proposition \ref{prop:first_convergence} using energy inequalities, which requires the following two lemmas.
 
\begin{lemma}\textbf{Stability inequality for gapped Hamiltonians.}\label{lemma:stability_generic}\mbox{}\\ 
	Let $h$ be a self-adjoint Hamiltonian with compact resolvent on a Hilbert space $\mathcal{H}$. Let $\lambda_0$ be the ground state energy with ground state $u_0$ (with $\|u_0\|_\mathcal{H}=1$), and let $G\ge0$ be the gap between ground state and first excited state. Then, for any $u\in\mathcal{D}(h)$ with $\|u\|_\mathcal{H}=1$,
	\begin{equation} \label{eq:stability_h}
	\langle u,hu\rangle_{\mathcal{H}} \ge \lambda_0+\frac{G}{2}\min_{\theta\in[0,2\pi]}\Big\|e^{i\theta}u-u_0\Big\|_{\mathcal{H}}^2.	\end{equation}
\end{lemma}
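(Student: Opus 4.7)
The plan is to diagonalize $h$ and reduce the inequality to a one-variable comparison.

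First, since $h$ has compact resolvent, the spectral theorem provides an orthonormal eigenbasis $\{u_n\}_{n\geq 0}$ of $\mathcal{H}$ with eigenvalues $\lambda_0 \leq \lambda_1 \leq \lambda_2 \leq \cdots$, where by definition of the gap $\lambda_n \geq \lambda_0 + G$ for all $n \geq 1$. Expanding an arbitrary normalized $u \in \mathcal{D}(h)$ as $u = \sum_{n \geq 0} c_n u_n$ with $\sum_n |c_n|^2 = 1$, I compute
\begin{equation*}
\langle u, h u\rangle_{\mathcal{H}} = \sum_{n \geq 0} \lambda_n |c_n|^2 \geq \lambda_0 |c_0|^2 + (\lambda_0 + G) \sum_{n \geq 1} |c_n|^2 = \lambda_0 + G\bigl(1 - |c_0|^2\bigr).
\end{equation*}

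Next, I evaluate the right-hand side of \eqref{eq:stability_h}. Since $\langle u, u_0\rangle_{\mathcal{H}} = c_0$,
\begin{equation*}
\bigl\|e^{i\theta} u - u_0\bigr\|_{\mathcal{H}}^2 = 2 - 2\,\mathrm{Re}\bigl(e^{i\theta}\, \overline{c_0}\bigr),
\end{equation*}
and choosing $\theta$ so that $e^{i\theta}\overline{c_0} = |c_0|$ yields
\begin{equation*}
\min_{\theta \in [0, 2\pi]} \bigl\|e^{i\theta} u - u_0\bigr\|_{\mathcal{H}}^2 = 2 - 2 |c_0|.
\end{equation*}

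Combining the two computations, the claim \eqref{eq:stability_h} reduces to verifying
\begin{equation*}
G\bigl(1 - |c_0|^2\bigr) \geq \frac{G}{2} \bigl(2 - 2|c_0|\bigr) = G\bigl(1 - |c_0|\bigr),
\end{equation*}
which follows from the factorization $1 - |c_0|^2 = (1 - |c_0|)(1 + |c_0|)$ together with $|c_0| \leq 1$ (by normalization) and $G \geq 0$. There is no real obstacle here; the only subtlety is choosing the optimal phase in the last term, which corresponds precisely to aligning $c_0$ with the real axis. Note that the bound is tight when $u$ lies in the span of $u_0$ and a single eigenvector at energy $\lambda_0 + G$ with real $c_0$, which is what one would expect from a quadratic stability estimate.
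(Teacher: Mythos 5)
Your proof is correct and follows essentially the same route as the paper's: spectral decomposition of $h$, lower bound $\langle u, hu\rangle \geq \lambda_0 + G(1-|c_0|^2)$, explicit minimization over the phase giving $2-2|c_0|$, and the elementary comparison $1-|c_0| \leq 1-|c_0|^2$. The only cosmetic difference is that you make the last inequality transparent via the factorization $1-|c_0|^2 = (1-|c_0|)(1+|c_0|)$, and you add a brief remark about when the bound is tight.
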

The assumption of compactness of the resolvent is clearly not crucial for this Lemma. We anyway keep it since that is the only case we will encounter. We also remark that in the case $G=0$, i.e., a degenerate ground state, the statement is trivial.
\begin{proof}
	By the assumptions we have the decomposition
	\begin{equation*}
	h=\lambda_0 \ket{u_0}\bra{u_0}+\sum_{n}\lambda_n\ket{u_n}\bra{u_n}
	\end{equation*}
	with $\lambda_n\ge \lambda_0+G$ for every $n$. Then
	\begin{equation*}
	\langle u,hu\rangle_{\mathcal{H}} \ge\lambda_0 \big|\langle u_0,u\rangle_{\mathcal{H}}\big|^2+(\lambda_0+G)\big(1-\big|\langle u_0,u\rangle_{\mathcal{H}}\big|^2\big)=\lambda_0+G\big(1-\big|\langle u_0,u\rangle_{\mathcal{H}}\big|^2\big).
	\end{equation*}
	On the other hand we have
	\begin{equation*}
	\min_{\theta\in[0,2\pi]}\big|e^{i\theta}u-u_0\big|_{\mathcal{H}}^2=2-2\max_{\theta\in[0,2\pi]}\mathrm{Re}\Big(e^{i\theta}\langle u_0,u\rangle_{\mathcal{H}}\Big)=2-2\big|\langle u_0,u\rangle_{\mathcal{H}}\big|.
	\end{equation*}
	Since $1-|\langle u_0,u\rangle_{\mathcal{H}}|\le 1-|\langle u_0,u\rangle_{\mathcal{H}}|^2$, the last two equations yield the desired estimate.
\end{proof}

%
%
\begin{lemma}[\textbf{Stability inequality for the one-well Hartree functionals.}]\mbox{}\label{lemma:stability_hartree}\\ 
	For a generic $u\in H^2(\mathbb{R}^d)\cap L^2\big(\mathbb{R}^d,V_{\mathrm{r}}(x)dx\big)$ with $\|u\|_{L^2}^2=\frac{1}{2}$, the following stability inequality holds:
	\begin{align}
	\label{eq:stability_L2_functional_r}
	\mathcal{E}_r[u]\ge\;& \mathcal{E}_r[u_r]+C\min_{\theta\in[0,2\pi]}\big\|e^{i\theta}u-u_r\big\|_{L^2}^2.
	\end{align}
\end{lemma}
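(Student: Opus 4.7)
The plan is to combine the algebraic structure of the Hartree functional with Lemma~\ref{lemma:stability_generic} (applied to the mean-field one-well Hamiltonian $h_r$), exploiting that the interaction term $\frac{\lambda}{2} D(|u|^2-|u_r|^2,|u|^2-|u_r|^2)$, with $D(f,g)=\iint f(x)w(x-y)g(y)\,dxdy$, is non-negative thanks to $\widehat w\ge 0$.

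First I would write the key algebraic identity. Using the definition of $h_r=-\Delta+V_r+\lambda w*|u_r|^2$ and the symmetry/bilinearity of $D$, a direct expansion gives
\begin{equation*}
\mathcal{E}_r[u] = \langle u, h_r u\rangle - \lambda D(|u|^2,|u_r|^2)+\tfrac{\lambda}{2}D(|u|^2,|u|^2) = \langle u, h_r u\rangle + \tfrac{\lambda}{2} D(|u|^2-|u_r|^2,|u|^2-|u_r|^2) - \tfrac{\lambda}{2} D(|u_r|^2,|u_r|^2).
\end{equation*}
Evaluating the same identity at $u=u_r$ and using $h_r u_r=\mu_r u_r$ with $\|u_r\|_{L^2}^2=1/2$ yields $\mathcal{E}_r[u_r]=\tfrac{\mu_r}{2}-\tfrac{\lambda}{2}D(|u_r|^2,|u_r|^2)$. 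Subtracting gives the clean expression
\begin{equation*}
\mathcal{E}_r[u] - \mathcal{E}_r[u_r] = \Big(\langle u, h_r u\rangle - \tfrac{\mu_r}{2}\Big) + \tfrac{\lambda}{2} D(|u|^2-|u_r|^2,|u|^2-|u_r|^2).
\end{equation*}

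Next I would discard the interaction remainder, which is non-negative by $D(f,f)=\int \widehat w\,|\widehat f|^2\ge 0$ (assumption~\eqref{eq:assum w}). It remains to lower bound the linear part $\langle u, h_r u\rangle - \tfrac{\mu_r}{2}$. For this I rescale to unit norm by setting $\tilde u:=\sqrt{2}\,u$, $\tilde u_r:=\sqrt{2}\,u_r$, so that $\|\tilde u\|_{L^2}=\|\tilde u_r\|_{L^2}=1$. The operator $h_r$ has compact resolvent (growth of $V_r$ at infinity), its ground state $u_r$ is unique up to phase, and so there is a strictly positive gap $G_r>0$ to the first excited level. Applying Lemma~\ref{lemma:stability_generic} to $h_r$ and $\tilde u$ gives
\begin{equation*}
\langle u, h_r u\rangle = \tfrac{1}{2}\langle \tilde u, h_r \tilde u\rangle \;\ge\; \tfrac{\mu_r}{2} + \tfrac{G_r}{4}\min_{\theta}\big\|e^{i\theta}\tilde u - \tilde u_r\big\|_{L^2}^2 = \tfrac{\mu_r}{2} + \tfrac{G_r}{2}\min_{\theta}\big\|e^{i\theta} u - u_r\big\|_{L^2}^2,
\end{equation*}
which combined with the identity above yields \eqref{eq:stability_L2_functional_r} with $C=G_r/2$.

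There is no serious obstacle; the only point that requires comment is the positivity of the spectral gap $G_r$. This is standard: $h_r$ is a Schr\"odinger operator with a confining (coercive at infinity) potential and a bounded perturbation $\lambda w*|u_r|^2$, hence has purely discrete spectrum, and the positive ground state $u_r$ is non-degenerate by the Perron--Frobenius-type argument of \cite[Section~XIII.12]{ReeSim4}. One may note that $G_r$ does not depend on $L$ since $h_r$ is a translate of a fixed Hamiltonian, so $C$ in \eqref{eq:stability_L2_functional_r} is $L$-independent, which is what matters for the subsequent use in proving Proposition~\ref{prop:first_convergence}.
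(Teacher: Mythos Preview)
Your proof is correct and follows essentially the same approach as the paper: both derive the identity $\mathcal{E}_r[u]-\mathcal{E}_r[u_r]=\langle u,h_r u\rangle-\langle u_r,h_r u_r\rangle+\tfrac{\lambda}{2}D(|u|^2-|u_r|^2,|u|^2-|u_r|^2)$, discard the nonnegative interaction remainder via $\widehat w\ge0$, and then apply Lemma~\ref{lemma:stability_generic} to $h_r$ (noting the gap is positive and $L$-independent). The only cosmetic difference is that you rescale explicitly to unit norm whereas the paper invokes homogeneity; the content is identical.
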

\begin{proof}
	First, let us notice that an application of Lemma \ref{lemma:stability_generic} for $h=h_r$ yields (the different normalization has no effect since the whole inequality is homogeneous with respect to an overall factor)
	\begin{equation}\label{eq:stability_L2_form}
	\langle u,h_r u\rangle\ge\; \langle u_r,h_r u_r\rangle+C\min_{\theta\in[0,2\pi]}\big\|e^{i\theta} u-u_r\big\|_{L^2}^2.
	\end{equation}
	Indeed, $h_r$ is obtained from a $L$-independent Hamiltonian by a translation by $\bx$, and therefore its spectrum does not depend on $L$. The properties of $V$ and $w$ imply that $h_r$ must have a gap (see, e.g., \cite[Theorem XIII.47]{ReeSim4}). The $L$-independence of the spectrum implies that such a gap does not depend on $L$. We can therefore apply Lemma \ref{lemma:stability_generic} and consider $G/2$ as a fixed constant. 
	%
	%
	
	To deduce~\eqref{eq:stability_L2_functional_r}, a simple computation gives
	\begin{equation*}
	\begin{split}
	\mathcal{E}_r[u]-\mathcal{E}_r[u_r]=\;&\langle u, h_r u\rangle-\langle u_r,h_r u_r\rangle+\frac{\lambda}{2}\int_{\mathbb{R}^d}(w*|u_r|^2)|u_r|^2\,dx\\
	&-\frac{\lambda}{2}\int_{\mathbb{R}^d}(w*|u|^2)|u|^2\,dx+\lambda\int_{\mathbb{R}^d}\big(w*(|u|^2-|u_r|^2)\big)|u|^2\,dx\\
	=\;&\langle u, h_\ell u\rangle-\langle u_r,h_\ell u_r\rangle+\frac{\lambda}{2}\int_{\mathbb{R}^d}\big(w*(|u_r|^2-|u|^2)\big)\big(|u_r|^2-|u|^2\big)\,dx.
	\end{split}
	\end{equation*}
	Since $\widehat w\ge0$, the last integral on the right hand side is non-negative. We discard it for a lower bound and get
	\begin{equation} \label{eq:stability_functional_form}
	\mathcal{E}_r[u]-\mathcal{E}_r[u_r]\ge \langle u, h_r u\rangle-\langle u_r,h_r u_r\rangle,
	\end{equation}
	which proves \eqref{eq:stability_L2_functional_r} thanks to \eqref{eq:stability_L2_form}.
\end{proof}

We are now ready to give the 

\begin{proof}[Proof of Proposition \ref{prop:first_convergence}]
	Let us first show the upper bound
	\begin{equation} \label{eq:upper_bound_u+}
	\mathcal{E}_\mathrm{DW}[u_+]\le 2\mathcal{E}_r[u_r]+C_\varepsilon T^{1-\varepsilon}.
	\end{equation}
	The normalized state $(u_r+u_\ell)/\|u_r+u_\ell\|_{L^2}$ is an admissible trial function for the minimization of $\mathcal{E}_\mathrm{DW}$ at unit mass. Notice that, by positivity of $u_r$ and $u_\ell$,
	\begin{equation*}
	\|u_r+u_\ell\|_{L^2}^2=1+2\mathrm{Re}\langle u_r,u_\ell\rangle\ge 1,
	\end{equation*}
	and hence we can ignore the norms in the denominator for an upper bound. We have
	\begin{equation} \label{eq:expansion_upper_bound}
	\begin{split}
	\mathcal{E}_\mathrm{DW}[u_+]\le\;& \mathcal{E}_\mathrm{DW}\Big[\frac{u_r+u_\ell}{\|u_r+u_\ell\|_{L^2}}\Big]\\
	\le\;&\int_{\mathbb{R}^d}|\nabla u_r|^2\,dx+\int_{\mathbb{R}^d} V_\mathrm{DW}|u_r|^2\,dx+\frac{\lambda}{2}\int_{\mathbb{R}^d} |u_r|^2 (w*|u_r|^2)\,dx\\
	&+\int_{\mathbb{R}^d}|\nabla u_\ell|^2\,dx+\int_{\mathbb{R}^d} V_\mathrm{DW}|u_\ell|^2\,dx+\frac{\lambda}{2}\int_{\mathbb{R}^d}|u_\ell|^2 (w*|u_\ell|^2)\,dx\\
	&+2\int_{\mathbb{R}^d}\nabla u_\ell\cdot \nabla u_r\,dx+2\int_{\mathbb{R}^d} V_\mathrm{DW}u_\ell u_r\,dx\\
	&+\frac{\lambda}{2}\iint_{\mathbb{R}^d\times\mathbb{R}^d} w(x-y)\Big[2u_\ell(x)u_r(x)\big|u_r(y)+u_\ell(y)\big|^2\\
	&\qquad\qquad\qquad\qquad\qquad+2|u_\ell(x)|^2\big(|u_r(y)|^2+2u_\ell(y)u_r(y)\big)\Big]\,dxdy.
	\end{split}
	\end{equation}
	In the first two lines in the right hand side we can use, respectively, $V_\mathrm{DW}\le V_\ell$ and $V_\mathrm{DW}\le V_r$. In this way the first line equals $\mathcal{E}_r[u_r]$ and the second one equals $\mathcal{E}_\ell[u_\ell]$, which actually coincide by translation invariance.
	The terms in the third line are remainders as follows from \eqref{eq:nabla_nabla_term} and \eqref{eq:V_l_r_term}. We then deduce
	\begin{equation*}
	\begin{split}
	\mathcal{E}_\mathrm{DW}[u_+]\le\;&2\mathcal{E}_r[u_r]+C_\varepsilon T^{1-\varepsilon}\\
	&+\frac{\lambda}{2}\iint_{\mathbb{R}^d\times\mathbb{R}^d} w(x-y)\Big[2u_\ell(x)u_r(x)\big|u_\ell(y)+u_r(y)\big|^2\\
	&\qquad\qquad\qquad\quad\qquad\qquad+2|u_\ell(x)|^2\big(2u_\ell(y)u_r(y)+|u_r(y)|^2\big)\Big]\,dxdy.
	\end{split}
	\end{equation*}
	To get rid of the last terms, involving $w$, we notice that by the Cauchy-Schwarz and Young inequalities we have
	\begin{equation*}
	\bigg|\iint_{\mathbb{R}^d\times\mathbb{R}^d} w(x-y) u_\ell(x)u_r(x) g(y)\,dxdy\,\bigg|\le \|w\|_{L^\infty}\,\|g\|_{L^1}\,\int_{\mathbb{R}^d} u_\ell u_r\,dx,
	\end{equation*}
	and the scalar product on the right is estimated using \eqref{eq:scalar_product}. The only remaining term to estimate is
	\begin{equation*}
	\lambda\iint_{\mathbb{R}^d\times\mathbb{R}^d}w(x-y)|u_\ell(x)|^2|u_r(y)|^2dxdy,
	\end{equation*}
	which we bound using~\eqref{eq:convol_two_sides}. We thus precisely obtain~\eqref{eq:upper_bound_u+}.
	
	Let us now prove the lower bound
	\begin{equation} \label{eq:lower_bound_u+}
	\mathcal{E}_\mathrm{DW}[u_+]\ge 2\mathcal{E}_r\big[\chi_{x_1\ge0} u_+\big]-C_\varepsilon T^{1-\varepsilon}.
	\end{equation}
	Using the IMS localization formula we have
	\begin{equation*}
	-\Delta+V_\mathrm{DW}=-\chi_{x_1\ge0}\Delta\chi_{x_1\ge0}-\chi_{x_1\le0}\Delta\chi_{x_1\le0}+ V_\mathrm{DW}\chi_{x_1\ge0}^2+V_\mathrm{DW}\chi_{x_1\le0}^2-(\nabla\chi_{x_1\ge0})^2- (\nabla\chi_{x_1\le0})^2.
	\end{equation*}
	Moreover,
	\begin{equation*}
	\begin{split}
	\int_{\mathbb{R}^d} (w*|u_+|^2)|u_+|^2\,dx=\;&\int_{\mathbb{R}^d} (w*|\chi_{x_1\ge0} u_+|^2)|\chi_{x_1\ge0} u_+|^2\,dx\\
	&+\int_{\mathbb{R}^d} (w*|\chi_{x_1\le0} u_+|^2)|\chi_{x_1\le0} u_+|^2\,dx\\
	&+2\int_{\mathbb{R}^d} (w*|\chi_{x_1\le0} u_+|^2)|\chi_{x_1\ge0} u_+|^2\,dx.
	\end{split}
	\end{equation*}
	The last summand in the right hand side of the last equation is positive and we will simply discard it for a lower bound. We thus have
	\begin{equation*}
	\begin{split}
	\mathcal{E}_\mathrm{DW}[u_+]\ge\;& 2\mathcal{E}_r\big[\chi_{x_1\ge0} u_+\big]+\int_{\mathbb{R}^d}\big(V_\mathrm{DW}-V_r)\chi_{x_1\ge0}^2|u_+|^2\,dx+\int_{\mathbb{R}^d}\big(V_\mathrm{DW}-V_\ell)\chi_{x_1\le0}^2|u_+|^2\,dx\\
	&-\int_{\mathbb{R}^d}(\nabla\chi_{x_1\ge0})^2|u_+|^2\,dx-\int_{\mathbb{R}^d}(\nabla\chi_{x_1\le0})^2|u_+|^2\,dx.
	\end{split}
	\end{equation*}
	The first two integrals in the right hand side are estimated in \eqref{eq:V_DW_to_V}. The integrals in the second line are smaller or equal than the quantities estimated in \eqref{eq:localized_without_anything}, because $|\nabla \chi_{x_1\ge0}|$ and $|\nabla \chi_{x_1\le0}|$ are by construction bounded by a constant and localized in $\{-C\le x_1\le C\}$. This proves \eqref{eq:lower_bound_u+}.

	Comparing \eqref{eq:upper_bound_u+} and \eqref{eq:lower_bound_u+} we deduce
	\begin{equation*}
	\mathcal{E}_r\big[\chi_{x_1\ge0} u_+\big]\le \mathcal{E}_r [u_r]+C_\varepsilon T^{1-\varepsilon}.
	\end{equation*}
	On the other hand, a direct application of Lemma \ref{lemma:stability_hartree} with $u=\chi_{x_1\ge0} u_+$ (notice that the property $\chi_{x_1\ge0}^2+\chi_{x_1\le0}^2=1$ together with the symmetry of $u_+$ imply $\|u\|_{L^2}^2=1/2$) yields
	\begin{equation*}
	\mathcal{E}_r\big[\chi_{x_1\ge0} u_+\big]\ge \mathcal{E}_r[u_r]+C\min_{\theta\in[0,2\pi]}\big\|e^{i\theta}\chi_{x_1\ge0} u_+-u_r\big\|_{L^2}^2=\mathcal{E}_r[u_r]+C\big\|\chi_{x_1\ge0} u_+-u_r\big\|_{L^2}^2,
	\end{equation*}
	having noticed that the minimum is attained at $\theta=0$ since $\chi_{x_1\ge0} u_+$ and $u_r$ are positive. The last two formulae imply the first estimate in \eqref{eq:l_r_L2_norm_u+}. The second one immediately follows since $u_+$ is symmetric under reflection across the $x_1=0$ axis, while $u_r$ is mapped into $u_\ell$ by such a reflection.
	
	Let us now prove \eqref{eq:l_r_L2_norm_u-}. As a first ingredient, let us show the following inequality:
	\begin{equation} \label{eq:difference_mf_potentials}
	\begin{split}
	\Big|\iint_{\mathbb{R}^d\times\mathbb{R}^d}w(x-y)|u_r(x)|^2\Big(|u_+(y)|^2-|u_{r}(y)|^2\Big)\,dxdy\Big|\le C_\varepsilon T^{1/2-\varepsilon}.
	\end{split}
	\end{equation}
	Clearly, an analogous inequality holds if $u_r$ is replaced by $u_\ell$. To prove \eqref{eq:difference_mf_potentials}, let us decompose
	\begin{equation*}
	\begin{split}
	\iint_{\mathbb{R}^d\times\mathbb{R}^d}&w(x-y)|u_r(x)|^2\Big(|u_+(y)|^2-|u_r(y)|^2\Big)\,dxdy\\
	=\;&\iint_{\mathbb{R}^d\times\mathbb{R}^d}w(x-y)|u_r(x)|^2\Big(|\chi_{x_1\ge0}(y)u_+(y)|^2-|u_r(y)|^2\Big)\,dxdy\\
	&+\iint_{\mathbb{R}^d\times\mathbb{R}^d}w(x-y)|u_r(x)|^2|\chi_{x_1\le0}(y)u_+(y)|^2\,dxdy\\
	=\;&\iint_{\mathbb{R}^d\times\mathbb{R}^d}w(x-y)|u_r(x)|^2\Big(\chi_{x_1\ge0}(y)u_+(y)-u_r(y)\Big)\Big(\chi_{x_1\ge0}(y)u_+(y)+u_r(y)\Big)\,dxdy\\
	&+\iint_{\mathbb{R}^d\times\mathbb{R}^d}w(x-y)|u_r(x)|^2|\chi_{x_1\le0}(y)u_+(y)|^2\,dxdy.
	\end{split}
	\end{equation*}
	The first term is estimated using Young's inequality for $w*|u_r|^2$, then Cauchy-Schwartz in the $y$-integration, and then \eqref{eq:l_r_L2_norm_u+}. The second term is estimated by recognizing that
	\begin{equation*}
	\iint_{\mathbb{R}^d\times\mathbb{R}^d}w(x-y)|u_r(x)|^2|\chi_{x_1\le0}(y)u_+(y)|^2\,dxdy \le C \int_{x_1 \le C}|u_r(x)|^2\,dx\le C_\varepsilon T^{1/2-\varepsilon}
	\end{equation*}
	thanks to \eqref{eq:lr_on_rl}. This shows \eqref{eq:difference_mf_potentials}. Notice that the error estimate in the right hand side of \eqref{eq:difference_mf_potentials} is worse than the one in \eqref{eq:l_r_L2_norm_u+}. This is the reason why we will obtain a similarly worse error estimate in \eqref{eq:l_r_L2_norm_u-}. 
	
	Let us now proceed to the actual proof of \eqref{eq:l_r_L2_norm_u-}. We aim at first proving an upper bound of the form
	\begin{equation} \label{eq:upper_bound_u-}
	\mu_-\le \mu_r+C_\varepsilon T^{1/2-\varepsilon}.
	\end{equation}
	Recall that we have
	\begin{equation*} \label{eq:inf_u-}
	\mu_-=\langle u_-, h_\mathrm{DW} u_-\rangle = \inf\Big\{\langle u,h_\mathrm{DW} u\rangle\;|\;\|u\|_{L^2}=1,\; u\perp u_+\Big\}.
	\end{equation*}
	The function $(u_r-u_\ell)/\|u_r-u_\ell\|_{L^2}$ is then a trial state for this minimization since, by the even parity of $u_+$,
	\begin{equation*}
	\langle u_\ell,u_+\rangle =\langle u_r,u_+\rangle.
	\end{equation*}
	Moreover, using \eqref{eq:scalar_product} we deduce
	\begin{equation*}
	\|u_r-u_\ell\|_{L^2}^2=1-2\mathrm{Re}\langle u_\ell,u_r\rangle \ge 1-C_\varepsilon T^{1-\varepsilon}.
	\end{equation*}
	Hence by the variational principle we have
	\begin{equation} \label{eq:partial_upper_bound_u-}
	\begin{split}
	\mu_-\le\;& \frac{1}{\|u_r-u_\ell\|_{L^2}^2}\langle u_r-u_\ell, h_\mathrm{DW}(u_r-u_\ell)\rangle\\
	\le\;& \langle u_\ell,h_\ell u_\ell\rangle+\langle u_r,h_r u_r\rangle\\
	&-2\int_{\mathbb{R}^d}\nabla u_\ell \nabla u_r\,dx+\int_\ell V_\ell|u_r|^2\,dx+\int_r V_r|u_\ell|^2\,dx\\
	&-2\int_{\mathbb{R}^d} V_\mathrm{DW}u_\ell u_r\,dx-\int_r V_\ell|u_\ell|^2\,dx-\int_\ell V_r|u_r|^2\,dx\\
	&-\lambda\iint_{\mathbb{R}^d\times\mathbb{R}^d}w(x-y)u_\ell(x)u_r(x)|u_+(y)|^2\,dxdy\\
	&+\frac{\lambda}{2}\iint_{\mathbb{R}^d\times\mathbb{R}^d}w(x-y)|u_\ell(x)|^2\big(|u_+(y)|^2-|u_\ell(y)|^2\big)\,dxdy\\
	&+\frac{\lambda}{2}\iint_{\mathbb{R}^d\times\mathbb{R}^d}w(x-y)|u_r(x)|^2\big(|u_+(y)|^2-|u_r(y)|^2\big)\,dxdy\\
	&+C_\varepsilon T^{1-\varepsilon}.
	\end{split}
	\end{equation}
	In the right hand side of \eqref{eq:partial_upper_bound_u-}, the first line equals $2\langle u_r, h_r u_r\rangle$. The second line contains remainders that can be estimated using \eqref{eq:nabla_nabla_term} and \eqref{eq:lr_on_rl_V}. The third and fourth lines are negative and we can safely discard them for an upper bound. The fifth and sixth lines account for the presence of $w*|u_\ell|^2$ and $w*|u_r|^2$ in $h_\ell$ and $h_r$, and their estimate was provided in \eqref{eq:difference_mf_potentials}.
	The only further term in the right hand side of \eqref{eq:partial_upper_bound_u-} is the last line, which comes from the estimate of $\|u_r-u_\ell\|_{L^2}^{-2}$. We therefore proved \eqref{eq:upper_bound_u-}.
	
	The lower bound
	\begin{equation} \label{eq:lower_bound_u-}
	\mu_-=\langle u_-, h_\mathrm{DW} u_-\rangle\ge 2\langle \chi_{x_1\ge0} u_-, h_r \chi_{x_1\ge0} u_-\rangle+C_\varepsilon T^{1/2-\varepsilon}
	\end{equation}
	is easily obtained using IMS formula and proceeding as in the proof of \eqref{eq:lower_bound_u+}, using also \eqref{eq:difference_mf_potentials}, \eqref{eq:localized_without_anything}, and \eqref{eq:localized_gradients}. Comparing \eqref{eq:stability_L2_form}, in which we choose $u=\chi_{x_1\ge0} u_-$ (the symmetry of $|u_-|$ implies that $\|u\|_{L^2}^2=1/2$), with \eqref{eq:upper_bound_u-} and \eqref{eq:lower_bound_u-} we deduce
	\begin{equation*}
	\min_{\theta\in[0,2\pi]}\big\|\chi_{x_1\ge0} u_--e^{i\theta}u_r\big\|_{L^2}^2 \le C_\varepsilon T^{1/2-\varepsilon}.
	\end{equation*}
	Repeating the arguments for the function $\chi_{x_1\le0} u_-$ we also deduce
	\begin{equation*}
	\min_{\theta\in[0,2\pi]}\big\|\chi_{x_1\le0} u_--e^{i\theta}u_\ell\big\|_{L^2}^2 \le C_\varepsilon T^{1/2-\varepsilon}.
	\end{equation*}
	By continuity, the minimizations have to be realized at some $\theta_1$ and $\theta_2$ (that a-priori depend on the distance $L$), i.e.,
	\begin{equation} \label{eq:l_r_L^2_norm_u-}
	\begin{split}
	\big\|\chi_{x_1\ge0} u_--e^{i\theta_1}u_r\big\|_{L^2}^2 \le C_\varepsilon T^{1/2-\varepsilon}\\
	\big\|\chi_{x_1\le0} u_--e^{i\theta_2}u_\ell\big\|_{L^2}^2 \le C_\varepsilon T^{1/2-\varepsilon}.
	\end{split}
	\end{equation}
	Moreover, since $u_-$ and $u_r$ are real-valued functions, we have
	\begin{equation*}
	\big\|\chi_{x_1\ge0} u_--e^{i\theta_1}u_r\big\|_{L^2}^2=1-\cos\theta_1 \int_{\mathbb{R}^d}\chi_{x_1\ge0} u_- u_r\,dx,
	\end{equation*}
	and similarly for the other norm. This shows that the minimization can occur for $\theta_1\in\{0,\pi\}$ and $\theta_2\in\{0,\pi\}$ only, depending on the sign of the integral in the right hand side. Modulo a change of sign of $u_-$, we can certainly assume that $\theta_1=0$ for every $L$. There remains to show that $\theta_2=\pi$ for every $L$. But this follows from
	\begin{equation*}
	\begin{split}
	0=\;&\langle u_-,u_+\rangle= \left\langle \chi_{x_1\le0} u_-,\chi_{x_1\le0}u_+\right\rangle+\left\langle \chi_{x_1\ge0} u_-,\chi_{x_1\ge0} u_+\right\rangle \\
	=\;&\left\langle \left(\chi_{x_1\le0} u_--e^{i\theta_2}u_\ell\right), \chi_{x_1\le0}u_+\right\rangle+\left\langle \left(\chi_{x_1\ge0} u_--u_r\right), \chi_{x_1\ge0} u_+\right\rangle\\
	&+ e^{-i\theta_2} \left\langle u_\ell, \chi_{x_1\le0} u_+\right\rangle+ \left\langle u_r,\chi_{x_1\ge0} u_+\right\rangle.
	\end{split}
	\end{equation*}
	Indeed, the first two scalar products in the right hand side converge to zero due to \eqref{eq:l_r_L^2_norm_u-} (with the choice $\theta_1=0$), while the second two scalar products converge to one due to \eqref{eq:l_r_L2_norm_u+}. This shows that $\theta_1$ must converge to $\pi$ as $L$ tends to infinity. Since it can only attain the values $0$ and $\pi$ by what discussed above, it must coincide with $\pi$ for $L$ large enough.
\end{proof}

\section{Estimates on spectral gaps}\label{sec:gaps}

In the present section we prove the claims \eqref{eq:first_gap} and \eqref{eq:second_gap} from our main result. The proof of the lower bound in \eqref{eq:first_gap}, being the most involved, requires an extra amount of information on $u_+$ and $u_-$, beyond the preliminary estimates of Proposition~\ref{prop:decay_estimates}. We discuss this in Subsection~\ref{subsect:further_u_+}.

\subsection{Upper bound on the first gap}

To deduce the upper bound in \eqref{eq:first_gap} we  consider the function
\begin{equation*}
v:=\frac{\big(\chi_{x_1\ge0}-\chi_{x_1\le0}\big) u_+}{\big\|\big(\chi_{x_1\ge0}-\chi_{x_1\le0}\big) u_+\big\|_{L^2}} \perp u_+
\end{equation*}
as a trial function for the minimization
\begin{equation*}
\begin{split}
\mu_-=\;&\min\Big\{\langle u,h_{\mathrm{DW}}u\rangle,\;\|u\|_{L^2}=1,\;u \perp u_+\Big\}.
\end{split}
\end{equation*}
Here $\chi_{x_1\ge0}$ and $\chi_{x_1\le0}$ are localization functions as in Proposition \ref{prop:first_convergence}, and this ensures
\begin{equation} \label{eq:odd_function_norm}
\big\|\big(\chi_{x_1\ge0}-\chi_{x_1\le0}\big) u_+\big\|_{L^2}^2=1-2\int_{\mathbb{R}^d}\chi_{x_1\ge0}\chi_{x_1\le0} |u_+|^2\,dx \ge 1-C_\varepsilon T^{1-\varepsilon}
\end{equation}
thanks to \eqref{eq:localized_without_anything}. By the variational principle we have
\begin{equation*}
\mu_-\le \langle v, h_\mathrm{DW} v\rangle = \frac{2\langle \chi_{x_1\ge0} u_+,h_\mathrm{DW}\chi_{x_1\ge0} u_+\rangle - 2 \langle \chi_{x_1\ge0} u_+,h_\mathrm{DW} \chi_{x_1\le0}u_+\rangle}{\big\|\big(\chi_{x_1\ge0}-\chi_{x_1\le0}\big) u_+\big\|_{L^2}^2}.
\end{equation*}
The second term in the numerator is bounded in absolute value by $C_\varepsilon T^{1-\varepsilon}$ as can be seen using \eqref{eq:localized_gradients}, \eqref{eq:chichi_V_+}, and \eqref{eq:chichi_w_+}. For the first term we use IMS formula which implies
\begin{equation} \label{eq:lower_bound_u_+}
\langle u_+, h_\mathrm{DW} u_+\rangle\ge  2\langle \chi_{x_1\ge0} u_+, h_\mathrm{DW} \chi_{x_1\ge0} u_+\rangle-C_\varepsilon T^{1-\varepsilon},
\end{equation}
as can be seen using once again \eqref{eq:localized_gradients}, \eqref{eq:chichi_V_+}, and \eqref{eq:chichi_w_+}. This, together with \eqref{eq:odd_function_norm} gives
\begin{equation*}
\mu_-\le \langle u_+,h_\mathrm{DW} u_+\rangle + C_\varepsilon T^{1-\varepsilon}= \mu_++CT^{1-\varepsilon}.
\end{equation*}

\subsection{Further properties of $u_+$ and $u_-$} \label{subsect:further_u_+}

We start with the following proposition, which provides a lower bound analog to~\eqref{eq:pointwise_upper_bound} for $u_+$, vindicating the sharpness of the latter estimate.

\begin{proposition}[\textbf{Lower bound for $u_+$}]\mbox{}\label{prop:lower_u_+}\\ 
	Let
	\begin{equation*}
	\alpha:=\begin{cases}
	\frac{2d-2+s}{4s}\quad&s>2\\
	\frac{2d-2+s}{4s}-\frac{\mu_{+}}{2s}\quad&s=2.
	\end{cases}
	\end{equation*}
	Then there exists $R>0$ independent of $L$ such that, for every $0<\varepsilon<1$, there exists a constant $c_\varepsilon>0$, also independent of $L$, such that
	\begin{equation} \label{eq:lower_u_+}
	u_+(x)\ge c_\varepsilon \frac{e^{-A_\mathrm{DW}(x)}}{V_\mathrm{DW}(x)^{\alpha_++\varepsilon}}
	\end{equation}
	for any $x$ such that $|x-\mathbf{x}|,|x+\mathbf{x}|\ge R$.
\end{proposition}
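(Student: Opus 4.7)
The approach is to construct an explicit positive WKB-type subsolution $\psi$ of the linear Schr\"odinger equation $h_\mathrm{DW}\psi=\mu_+\psi$ in the exterior region $\Omega_R:=\{|x-\bx|>R\}\cap\{|x+\bx|>R\}$, and then to deduce \eqref{eq:lower_u_+} by a weak maximum principle comparison with $u_+$. The natural Agmon ansatz, suggested by the upper bound \eqref{eq:pointwise_upper_bound} together with the polynomial prefactors of standard WKB, is
\[
\psi(x):=V_\mathrm{DW}(x)^{-(\alpha+\varepsilon)}\,e^{-A_\mathrm{DW}(x)};
\]
the exponent $\alpha$ appearing in the statement is tuned precisely so that $\psi$ is a subsolution to leading order.

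The subsolution check is a direct computation. Writing $\psi=e^f$ with $f=-A_\mathrm{DW}-(\alpha+\varepsilon)\log V_\mathrm{DW}$, one has $-\Delta\psi/\psi=-\Delta f-|\nabla f|^2$. Using the eikonal identity $|\nabla A_\mathrm{DW}|^2=V_\mathrm{DW}$ and the power form $V_\mathrm{DW}(x)=|x\mp\bx|^s$ in each half-space, the Riccati expression becomes, with $r=|x\mp\bx|$,
\[
\frac{(-\Delta+V_\mathrm{DW}-\mu_+)\psi}{\psi}
= r^{s/2-1}\bigl[(s/2+d-1)-2(\alpha+\varepsilon)s\bigr]+O(r^{-2})-\mu_+.
\]
For $s>2$ the dominant term is $r^{s/2-1}$, and the choice $\alpha=(2d-2+s)/(4s)$ makes its coefficient equal to $-2\varepsilon s<0$, dominating the $O(r^{-2})$ and the constant $-\mu_+$ for $R$ large. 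For $s=2$ the factor $r^{s/2-1}\equiv 1$ combines with $-\mu_+$, and the correction $-\mu_+/(2s)$ built into $\alpha$ makes the overall constant equal to $-4\varepsilon<0$. In either case $(-\Delta+V_\mathrm{DW}-\mu_+)\psi\le -\delta\,\psi$ in $\Omega_R$ for some $\delta=\delta(\varepsilon)>0$. The self-interaction contribution is absorbed via Proposition~\ref{prop:decay_estimates}: since $w$ is bounded and compactly supported and $|u_+|^2\le C_\varepsilon e^{-2(1-\varepsilon)A_\mathrm{DW}}$, one has $0\le\lambda\,w*|u_+|^2(x)\le C_\varepsilon e^{-(2-3\varepsilon)A_\mathrm{DW}(x)}$, which is negligible compared to $\delta\psi$ in $\Omega_R$. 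Thus $(h_\mathrm{DW}-\mu_+)\psi\le 0$ in $\Omega_R$.

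The comparison is then standard. Set $\tilde u:=u_+-c_\varepsilon\,\psi$; in $\Omega_R$ the operator $h_\mathrm{DW}-\mu_+=-\Delta+U$ with $U:=V_\mathrm{DW}+\lambda w*|u_+|^2-\mu_+\ge 1$ (for $R$ large, by Lemma~\ref{lemma:uniform_mu}), and $(-\Delta+U)\tilde u\ge 0$ distributionally. On $\partial\Omega_R=\{|x-\bx|=R\}\cup\{|x+\bx|=R\}$, Proposition~\ref{prop:first_convergence} combined with elliptic regularity upgrades to uniform $C^0$-convergence on compact sets, giving $u_+(\cdot\pm\bx)\to u_0$ in $C(\overline{B_R})$ with $u_0>0$ strictly, hence $u_+\ge c_0>0$ uniformly in $L$ large; since $\psi$ is bounded on $\partial\Omega_R$, taking $c_\varepsilon$ sufficiently small ensures $\tilde u\ge 0$ there. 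At infinity both $u_+$ and $\psi$ decay to zero, so $\tilde u\to 0$. An interior negative minimum would yield $(-\Delta+U)\tilde u<0$ at that point and contradict the supersolution inequality; the weak maximum principle therefore gives $\tilde u\ge 0$ throughout $\Omega_R$, which is exactly \eqref{eq:lower_u_+}.

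The two technical points I foresee are: (i) $A_\mathrm{DW}$ has a Lipschitz kink across the hyperplane $\{x_1=0\}$ (it attains a local maximum there in the normal direction), so $\psi$ has a local minimum across that plane and $-\Delta\psi$ acquires a negative distributional surface measure; this only \emph{reinforces} the subsolution inequality, so either the argument can be run distributionally on all of $\Omega_R$, or else in each half-space separately using the symmetry of $u_+$ and $\psi$. (ii) The real obstacle is the sharp identification of $\alpha$: any smaller exponent of $V_\mathrm{DW}$ would leave a positive leading term in the Riccati expansion and destroy the subsolution property, so the leading-order cancellation must be carried out exactly; in particular the $\mu_+$-dependent shift at $s=2$ is forced because at that value of $s$ the eigenvalue and the formally ``leading'' $r^{s/2-1}$ coefficient are of the same constant order and must cancel together.
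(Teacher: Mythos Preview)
Your proposal is correct and follows essentially the same approach as the paper: both construct the identical WKB-type comparison function $\psi=V_\mathrm{DW}^{-(\alpha+\varepsilon)}e^{-A_\mathrm{DW}}$, verify via the same Riccati/Laplacian computation that it is a subsolution of $(h_\mathrm{DW}-\mu_+)\psi\le 0$ outside balls of fixed radius around $\pm\bx$ (with the exponent $\alpha$ pinned down exactly as you describe), and then conclude by a maximum-principle comparison with $u_+$, the boundary positivity of $u_+$ coming from Proposition~\ref{prop:first_convergence}. The only cosmetic differences are that the paper invokes Harnack's inequality (rather than $C^0$-convergence) for the uniform lower bound on $u_+$ near the wells, and handles the kink at $\{x_1=0\}$ by arguing directly that $g=u_+-c_\varepsilon\psi$ has a directional \emph{maximum} there (so the global minimum lies in the smooth region), which is the pointwise counterpart of your distributional observation.
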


A pointwise lower bound for $u_-$ and $u_\mathrm{ex}$ is not to be expected, since excited eigenfunctions have to change sign.

\begin{proof}[Proof]For a number $\beta\in\mathbb{R}$, define the function
	\begin{equation*}
	f(x)=e^{-A_\mathrm{DW}(x)}V_\mathrm{DW}(x)^{-\beta/s}=\begin{cases} 
	e^{-(1+s/2)^{-1}|x-\bx|^{1+s/2}}|x-\bx|^{-\beta}\quad x_1\ge0\\
	e^{-(1+s/2)^{-1}|x+\bx|^{1+s/2}}|x+\bx|^{-\beta}\quad x_1\le0.
	\end{cases}
	\end{equation*}
	Using the fact that $f$ only depends on $|x-\bx|$ for $x_1 \geq 0$ or $|x+\bx|$ for $x_1\leq 0$, we compute
	\begin{equation}\label{eq:trial sol}
	\begin{split}
	\Delta& f(x)\\
	&=\begin{cases} 
	\Big[|x-\bx|^s+\Big(2\beta-\frac{s}{2}-d+1\Big)|x-\bx|^{s/2-1}+\Big(\beta^2+2\beta-d\beta\Big)|x-\bx|^{-2}\Big] f(x)\\\text{for} \quad x_1>0\\\\
	\Big[|x+\bx|^s+\Big(2\beta-\frac{s}{2}-d+1\Big)|x+\bx|^{s/2-1}+\Big(\beta^2+2\beta-d\beta\Big)|x+\bx|^{-2}\Big] f(x)\\\text{for} \quad x_1>0.
	\end{cases}
	\end{split}
	\end{equation}
	Notice that for $x_1=0$ the Laplacian is defined in a distributional sense only.
	Since $w*|u_+|^2-\mu_+$ is uniformly $L^\infty$-bounded (by Young's inequality and Lemma \ref{lemma:uniform_mu}), picking $\beta=s\alpha_++\varepsilon$ we deduce
	\begin{equation} \label{eq:eventual_negativity}
	\Big(-\Delta+V_\mathrm{DW}(x)+\lambda w*|u_+|^2(x)-\mu_+\Big)f(x)\le 0
	\end{equation}
	for $|x-\bx|\ge R$ and $|x+\bx|\ge R$ with $R$ large enough and independent on $L$, and away from $x_1=0$. Notice that, in the case $s=2$, the existence of such an $R$ is ensured by the fact that $w*|u_+|^2$ decays at infinity with a rate that does not depend on $L$, as follows from the upper bound in~\eqref{eq:pointwise_upper_bound}.	
	Moreover, again for $|x-\bx|\ge R$ and $|x+\bx|\ge R$ with $R$ large enough and independent on $L$,
	\begin{equation} \label{eq:eventual_control}
	V_\mathrm{DW}(x)>\mu_+-\lambda w*|u_+|^2(x).
	\end{equation}
	Consider now a function $f_\mathrm{low}$ which is equal to $ f$ (with $\beta=s\alpha_++\varepsilon$) outside of $B_{\bx}(R)\cup B_{-\bx}(R)$ and smoothly extended to a function bounded away from zero inside $B_{\bx}(R)\cup B_{-\bx}(R)$. Since, by construction, $f$ is a $L$-independent function of $|x-\mathbf{x}|$ or $|x+\mathbf{x}|$, $f_\mathrm{low}$ can be chosen to be $L$-independent as well. Define the (a priori $L$-dependent) constant
	\begin{equation*}
	c_\varepsilon=\min_{\substack{|x-\bx|<R\\|x+\bx|<R}}\frac{u_+(x)}{f_\mathrm{low}(x)}.
	\end{equation*}
	Since $u_+>0$, we have\footnote{This is the place where the argument ceases to apply to $u_-$ or $u_{\rm ex}$} that $c_\varepsilon>0$. Let us consider the continuous function
	\begin{equation*}
	g=u_+-c_\varepsilon f_\mathrm{low}.
	\end{equation*}
	We will prove that $g$ is positive. Then \eqref{eq:lower_u_+} follows thanks to
	\begin{equation*}
	c_\varepsilon \ge \frac{\min_{\substack{|x-\bx|<R\\|x+\bx|<R}} u_+(x)}{\max_{\substack{|x-\bx|<R\\|x+\bx|<R}}f_\mathrm{low}} \ge C \|u_+\|_{L^\infty\left(B_{\bx}(R)\cup B_{-\bx}(R)\right)}.
	\end{equation*}
	Here we used Harnack's inequality~\cite[Section~6.4.3]{Evans-98} for the set $B_{\bx}(R)\cup B_{-\bx}(R)$ (the resulting constant depends only on $R$, not on $L$). The right-hand side of the above is bounded away from $0$ as $L\to\infty$, for otherwise $u_+$ would converge to $0$ in $L^2 \left(B_{\bx}(R)\cup B_{-\bx}(R)\right)$, contradicting Proposition~\ref{prop:first_convergence}. Thus $c_\varepsilon$ is bounded away from zero as $L\to\infty$ and there only remains to prove that $g$ is positive.
	
	The function $g$, being continuous and decaying at infinity, could have either of the three following behaviors. (i) it attains a global minimum inside $B_\mathbf{x}(R)\cup B_{-\mathbf{x}}(R)$, (ii) it attains a global minimum outside of $B_\mathbf{x}(R)\cup B_{-\mathbf{x}}(R)$, or  (iii) it has no global minimum and is everywhere positive. In the latter case (iii) there is nothing to prove. In case (i) the proof is complete because $g$ is by construction positive inside $B_\mathbf{x}(R)\cup B_{-\mathbf{x}}(R)$. Finally, consider case (ii): $g$ attains a global minimum at some $(y_1,\ldots,y_d) = y\notin B_\mathbf{x}(R)\cup B_{-\mathbf{x}}(R)$. We can exclude the possibility that $y_1=0$. Indeed, on the hyperplane $\{x_1=0\}$, $u_+$ has (by parity) a $x_1$-directional critical point where (by smoothness) $\nabla u_+$ vanishes. On the other hand, $f_\mathrm{low}$ has by construction a $x_1$-directional minimum there, with $\partial_{x_1} f_\mathrm{low}$ making a jump between two different \emph{non-zero} values. Hence, in the $x_1$ direction, $u_+$ converges at least quadratically to its value at $x_1=0$, while $f_\mathrm{low}$ decays as $\sim|x_1|$ to its value on the hyperplane. It follows that $g$ has a $x_1$-directional maximum on the hyperplane $x_1 = 0$. Thus the global minimum $y$ must belong to a region where $g$ is smooth, implying $\Delta g \geq 0$ at $y$. We then use the eigenvalue equation for $u_+$ and \eqref{eq:eventual_negativity} to deduce
	\begin{equation*}
	\begin{split}
	\big(-\Delta +V_\mathrm{DW}(y)&+\lambda w*|u_+|^2(y)-\mu_+\big) g(y) \\
	=\;&-c_\varepsilon\left(-\Delta +V_\mathrm{DW}(y)+\lambda w*|u_+|^2(y)-\mu_+\right)f_\mathrm{low} (y)\ge0
	\end{split}
	\end{equation*}
	and hence
	\begin{equation*}
	\big( V_\mathrm{DW}(y)+\lambda w*|u_+|^2(y)-\mu_+\big) g(y) \ge0.
	\end{equation*}
	Thanks to \eqref{eq:eventual_control} we finally deduce $g(y)\ge0$, which concludes the proof.	
\end{proof}

We also state the following lemma, containing properties of $u_-$.

\begin{lemma}[\textbf{Symmetry and sign of $u_-$}] \label{lemma:positivity_u-}\mbox{}\\
	The function $u_-$ is odd with respect to reflections across the $x_1=0$ plane, i.e.,
	\begin{equation} \label{eq:odd_symmetry}
	u_-(-x_1,x_2,\dots,x_d)=-u_-(x_1,x_2,\dots,x_d).
	\end{equation}
	Moreover, assume that we pick for $u_-$ the same phase as in Proposition \ref{prop:first_convergence}. Then $u_-(x)>0$ almost everywhere for $x_1>0$.
\end{lemma}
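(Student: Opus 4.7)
The plan splits into two essentially independent steps: first establish the odd parity~\eqref{eq:odd_symmetry} of $u_-$ by combining the symmetry of $h_{\mathrm{DW}}$ with the one-sided approximations of Proposition~\ref{prop:first_convergence}, then obtain the pointwise sign by identifying $u_-$ on the right half-space with a positive Dirichlet ground state.

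For the first step, let $R(x_1,x_2,\ldots,x_d)=(-x_1,x_2,\ldots,x_d)$ and $U_R\phi:=\phi\circ R$. Since $V_{\mathrm{DW}}$ is $R$-invariant and $u_+$ is even (being the unique positive minimizer), so is $w\ast|u_+|^2$; hence $h_{\mathrm{DW}}$ commutes with $U_R$. Assuming $\mu_-$ simple (which will be an a posteriori consequence of Item (ii) of Theorem~\ref{thm:main}; otherwise one runs the same computation on the parity decomposition $u_-=u_s+u_a$ and redefines $u_-:=u_a/\|u_a\|_{L^2}$), $U_R u_-=c\,u_-$ with $c=\pm 1$. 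If $c=+1$ then, using $\chi_{x_1\ge 0}(x)=\chi_{x_1\le 0}(Rx)$ and $u_r\circ R=u_\ell$, a change of variables turns
\[
\bigl\|\chi_{x_1\ge 0}u_- -u_r\bigr\|_{L^2}\le C_\varepsilon T^{1/2-\varepsilon}
\qquad\text{into}\qquad
\bigl\|\chi_{x_1\le 0}u_- -u_\ell\bigr\|_{L^2}\le C_\varepsilon T^{1/2-\varepsilon}.
\]
Combining with $\|\chi_{x_1\le 0}u_-+u_\ell\|_{L^2}\le C_\varepsilon T^{1/2-\varepsilon}$ from Proposition~\ref{prop:first_convergence} and using the triangle inequality, one finds $\|u_\ell\|_{L^2}\le C_\varepsilon T^{1/2-\varepsilon}\to 0$, contradicting $\|u_\ell\|_{L^2}^2=1/2$. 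Hence $c=-1$, which is~\eqref{eq:odd_symmetry}.

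For the second step, oddness forces the trace of $u_-$ on the hyperplane $\{x_1=0\}$ to vanish, so the restriction $u_-|_\Omega$ to $\Omega:=\{x_1>0\}$ belongs to $H^1_0(\Omega)$ and is an eigenfunction, at eigenvalue $\mu_-$, of the Dirichlet realization $h_{\mathrm{DW}}^D$ of $h_{\mathrm{DW}}$ on $\Omega$. I claim $\mu_-$ is in fact the bottom eigenvalue of $h_{\mathrm{DW}}^D$: any Dirichlet eigenfunction extended by odd reflection gives an odd $H^1(\mathbb{R}^d)$ eigenfunction of $h_{\mathrm{DW}}$ with the same eigenvalue, and the only eigenvalue of $h_{\mathrm{DW}}$ strictly below $\mu_-$ is $\mu_+$, whose (unique) eigenvector $u_+$ is even. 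So the smallest Dirichlet eigenvalue is $\ge\mu_-$, and the reverse inequality follows by testing with $u_-|_\Omega$. By the classical Perron--Frobenius-type statement for Schr\"odinger operators on connected domains (cf.~\cite[Section~XIII.12]{ReeSim4}), this ground state is simple and strictly positive, so $u_-|_\Omega$ has constant sign a.e.

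Finally, to fix the sign, I would evaluate
\[
\int_\Omega u_-\,u_r\,dx=\langle\chi_{x_1\ge 0}u_-,u_r\rangle+\int_\Omega (1-\chi_{x_1\ge 0}^2)^{1/2}\ldots
\]
more simply: write $\int_\Omega u_-u_r\,dx=\langle\chi_{x_1\ge 0}u_-,u_r\rangle-\int_{\{x_1>0\}}(\chi_{x_1\ge 0}-1)u_- u_r\,dx$; the first term is $\|u_r\|_{L^2}^2+o(1)=\tfrac12+o(1)$ by Proposition~\ref{prop:first_convergence}, while the second is negligible since $\chi_{x_1\ge 0}=1$ outside a strip $\{|x_1|\le C\}$ on which $u_r$ decays exponentially by Proposition~\ref{prop:decay_estimates}. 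Positivity of this integral together with $u_r>0$ selects the positive sign for $u_-$ on $\Omega$. The main subtlety of the whole argument is really the parity step, where one has to cope with the fact that simplicity of $\mu_-$ is not yet available; working with the parity decomposition of $u_-$ bypasses this cleanly, since the computation above shows in any case that the even component of $u_-$ has $L^2$ norm $o(1)$, hence may be discarded at the cost of a $T^{1/2-\varepsilon}$ error absorbed in the constant.
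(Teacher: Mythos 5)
Your proof is correct and follows essentially the same strategy as the paper: (1) the odd parity is obtained by noting that $u_-$ must be of definite parity (since $h_{\mathrm{DW}}$ commutes with the reflection) and then ruling out evenness via the estimates of Proposition~\ref{prop:first_convergence}; (2) the sign is obtained by identifying $u_-$ with the ground state of the Dirichlet Hamiltonian on the half-space $\{x_1>0\}$ and invoking a Perron--Frobenius-type positivity result (the paper uses the Trotter product formula to show $e^{-tH}$ is positivity improving, then cites~\cite[Theorem~XIII.44]{ReeSim4}, which is the same tool you refer to). Your proof is somewhat more explicit on two points the paper leaves implicit---the potential non-simplicity of $\mu_-$ (handled by the parity decomposition, though as the paper notes in Section~\ref{sect:main} the eigenvectors are chosen of definite parity from the start) and the final sign determination via $\int_{x_1>0} u_- u_r>0$---but the substance of the argument is identical.
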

%
%

\begin{proof}[Proof of Lemma \ref{lemma:positivity_u-}]
	Since $u_-$ must be either odd or even, the fact that it is odd is a consequence of \eqref{eq:l_r_L2_norm_u-}. To prove that $u_-(x)>0$ for $x_1>0$, let us first notice that by the odd symmetry
	\begin{equation*}
	\begin{split}
	\mu_-=\;&\min\Big\{\langle u,h_{\mathrm{DW}}u\rangle,\;\|u\|_{L^2}=1,\;u \perp u_+\Big\}\\
	=\;&\min\Big\{\langle u,h_{\mathrm{DW}}u\rangle,\;\|u\|_{L^2}=1,\;u\;\text{odd}\Big\}.
	\end{split}
	\end{equation*}
	Thus, $\mu_-$ must coincide with the ground state energy of the Dirichlet Hamiltonian
	\begin{equation*}
	H=-\Delta^{(D)}_{x_1\ge0}+V_\mathrm{DW}+\lambda w*|u_+|^2,
	\end{equation*}
	where $\Delta^{(D)}_{x_1\ge0}$ is the Dirichlet Laplacian in the half-space $\{x_1\ge0\}$. Using the Trotter product formula~\cite[Theorem VII.31]{ReeSim1} and the fact that $V_\mathrm{DW}+\lambda w*|u_+|^2\ge 0$, it is easy to see that $e^{-tH}$ is positivity improving $\forall t>0$. On the other hand $H$ has compact resolvent, and hence the bottom of its spectrum is an eigenvalue. Using~\cite[Theorem XIII.44]{ReeSim4} completes the proof. 
\end{proof}

\subsection{Lower bound for the first gap}

We now provide the proof of the lower bound in \eqref{eq:first_gap}, following~\cite{Simon-84}. We start with

\begin{lemma}[\textbf{Expression for the first gap}]\mbox{}\\
	For any smooth $f$ such that $fu_+\in H^1(\mathbb{R}^d)$, we have
	\begin{equation}
	\langle fu_+,(h_\mathrm{MF}-\mu_+) f u_+\rangle =\frac{1}{2}\left\| u_+\nabla f\right\|^2_{L^2}.
	\end{equation}
\end{lemma}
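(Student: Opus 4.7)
The identity is an instance of the standard \emph{ground state transform} (or Persson trick), exploiting that $u_+$ satisfies the eigenvalue equation $h_\mathrm{DW} u_+ = \mu_+ u_+$. Write $W:=V_\mathrm{DW}+\lambda w*|u_+|^2$, so that $h_\mathrm{DW}=-\Delta+W$. Since $W$ commutes with multiplication by $f$, the commutator reduces to a purely kinetic piece:
\begin{equation*}
[h_\mathrm{DW},f]\,u_+=[-\Delta,f]\,u_+=-(\Delta f)u_+-2\nabla f\cdot\nabla u_+.
\end{equation*}
Combining this with $h_\mathrm{DW} u_+=\mu_+ u_+$ gives the pointwise identity
\begin{equation*}
(h_\mathrm{DW}-\mu_+)(fu_+)= f(h_\mathrm{DW}-\mu_+)u_++[h_\mathrm{DW},f]u_+=-(\Delta f)u_+-2\nabla f\cdot\nabla u_+.
\end{equation*}

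Next I would take the $L^2$ scalar product of the right-hand side with $fu_+$ (assuming $f$ real, as is customary in the applications; the complex case follows by taking real and imaginary parts). This gives
\begin{equation*}
\langle fu_+,(h_\mathrm{DW}-\mu_+)fu_+\rangle=-\int_{\R^d} f u_+^2\,\Delta f\,dx-2\int_{\R^d} f u_+\,\nabla f\cdot\nabla u_+\,dx.
\end{equation*}
An integration by parts on the first term (writing $\nabla\cdot(fu_+^2\nabla f)=(\nabla f)\cdot\nabla(fu_+^2)+fu_+^2\Delta f$) yields
\begin{equation*}
-\int_{\R^d}fu_+^2\Delta f\,dx=\int_{\R^d}u_+^2|\nabla f|^2\,dx+2\int_{\R^d}fu_+\,\nabla f\cdot\nabla u_+\,dx,
\end{equation*}
whose last summand exactly cancels the remaining term. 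What is left is precisely $\int_{\R^d}u_+^2|\nabla f|^2\,dx=\|u_+\nabla f\|_{L^2}^2$, matching the target identity up to the numerical constant.

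The only technical point to check is that all integrations by parts are licit, i.e.\ that the boundary terms at infinity indeed vanish. This is where the hypothesis $fu_+\in H^1(\R^d)$ enters, together with the regularity provided earlier: by Lemma \ref{lemma:regularity} and Lemma \ref{lemma:uniform_Sobolev}, $u_+\in C^\infty\cap H^2$, so $u_+\nabla u_+$ and $u_+^2\nabla f$ are both in $L^1$ (the latter because $|fu_+|$ and $|\nabla(fu_+)|$ are $L^2$, and $|\nabla f|\,u_+\le|\nabla(fu_+)|+|f\nabla u_+|$). The vector fields being differentiated thus decay in an $L^1$ sense, and the divergence-theorem identities used above can be justified by the usual cut-off argument already employed in the proof of Lemma \ref{lemma:first_agmon_lemma}: multiply by the localizer $\theta_j$, integrate by parts on a compact set, and pass to the limit $j\to\infty$ using the bounds $\|\nabla\theta_j\|_\infty\le C/j$. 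No further ingredients are needed; the \emph{only} mild subtlety is this integrability check, and once it is in place the algebraic cancellation above is immediate.
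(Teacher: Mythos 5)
Your computation is correct and is, at bottom, the same algebra as the paper's, just packaged differently: you perform the ground state transform directly, expanding $(h_\mathrm{DW}-\mu_+)(fu_+)$ via the eigenvalue equation and integrating by parts, whereas the paper takes the double commutator $[f,[f,h_\mathrm{DW}-\mu_+]]$ and evaluates its expectation on $u_+$ (which, once unwound, involves exactly the same cancellations). Your version is more explicit about where each term comes from and about the justification of the integrations by parts, which the paper does not discuss; the commutator phrasing is a more compact shortcut borrowed from Kac--Thompson and Simon. Either way the content is identical.

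You are also right to flag the numerical constant, and you can be more assertive about it: with the Hamiltonian as this paper defines it, $h_\mathrm{DW}=-\Delta+V_\mathrm{DW}+\lambda w*|u_+|^2$, the identity really is
\begin{equation*}
\langle fu_+,(h_\mathrm{DW}-\mu_+)fu_+\rangle=\left\Vert u_+\nabla f\right\Vert_{L^2}^2,
\end{equation*}
without the $\tfrac12$. Indeed $[f,[f,-\Delta]]=-2|\nabla f|^2$, and taking the $u_+$-expectation of $[f,[f,h_\mathrm{DW}-\mu_+]]$ gives $-2\langle fu_+,(h_\mathrm{DW}-\mu_+)fu_+\rangle$, exactly reproducing your direct computation. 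The paper's proof writes $[f,[f,-\tfrac12\Delta]]=-(\nabla f)^2$, which is the identity appropriate for a Hamiltonian normalized as $-\tfrac12\Delta+W$; that normalization does not match the paper's own definition of $h_\mathrm{DW}$, so the $\tfrac12$ in the lemma statement is a small internal inconsistency. It is harmless downstream: the lemma is applied with $f=u_-/u_+$ only to produce a lower bound on $\mu_--\mu_+$, and the true identity only strengthens that bound. Your proof, with the constant corrected, is the one to trust.
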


\begin{proof}
	The proof (from~\cite{KacTho-69,Simon-84}) follows from the equality
	\begin{equation*}
	\left[ f,\left[f,h_\mathrm{MF}-\mu_+  \right] \right]=\left[ f,\left[ f,-\frac{1}{2}\Delta \right] \right]=-\left( \nabla f\right)^2
	\end{equation*}
	after taking the expectation on $u_+$.
\end{proof}

In order to prove the lower bound, we apply the above with $f=u_-/u_+$, and drop the integration in the right-hand side outside of a hyper-rectangle. Notice that $f$ is smooth since both $u_+$ and $u_-$ are, and $u_+$ is strictly positive (see, e.g., \cite[Thm. XIII.47]{ReeSim4}). This gives
\begin{equation*}
\mu_--\mu_+ =\frac{1}{2} \int_{\mathbb{R}^d} |\nabla f|^2 |u_+|^2 \ge \frac{1}{2}\int_{x_2=-K}^{x_2=K}\dots \int_{x_d=-K}^{x_d=K}\int_{x_1=-L/2+R}^{x_1=L/2-R} |\nabla f|^2 |u_+|^2 \,dx_1\dots dx_d,
\end{equation*}
where $K$ is an arbitrary fixed positive number, and $R$ is large enough so that one can apply Proposition \ref{prop:lower_u_+}, which we do. For a lower bound on $u_+$ we use~\eqref{eq:lower_u_+} and evaluate $A_\mathrm{DW}$ at its minimum $x=0$ in the right-hand side. This gives
\begin{equation*}
\mu_--\mu_+ \ge C_\varepsilon T^{1+\varepsilon}\int_{x_2=-K}^{x_2=K}\dots \int_{x_d=-K}^{x_d=K}\int_{x_1=-L/2+R}^{x_1=L/2-R} |\nabla f|^2\,  dx_1\dots dx_d.
\end{equation*}
For the gradient of $f$ we use the trivial inequality
\begin{equation*}
|\nabla f|^2 \ge |\partial_1 f|^2,
\end{equation*}
as well as Cauchy-Schwarz in the form
\begin{equation*}
\begin{split}
\int_{x_2=-K}^{x_2=K}\dots& \int_{x_d=-K}^{x_d=K}\int_{x_1=-L/2+R}^{x_1=L/2-R} |\partial_1 f|^2 \, dx_1\dots dx_d \\
\ge\;& \frac{1}{C L}\left( \int_{x_2=-K}^{x_2=K}\dots \int_{x_d=-K}^{x_d=K}\int_{x_1=-L/2+R}^{x_1=L/2-R} \partial_1 f\,  dx_1\dots dx_d\right)^2\\
=\;&\frac{1}{C L}\bigg( \int_{x_2=-K}^{x_2=K}\dots \int_{x_d=-K}^{x_d=K} \\
&\qquad\times\left[ f(L/2-R,x_2,\dots,x_d)-f(-L/2+R,x_2,\dots,x_d) \right]  \,dx_2\dots dx_d\bigg)^2\\
=\;&\frac{2}{CL}\bigg( \int_{x_2=-K}^{x_2=K}\dots \int_{x_d=-K}^{x_d=K}  f(L/2-R,x_2,\dots,x_d) \, dx_2\dots dx_d\bigg)^2.
\end{split}
\end{equation*}
The last step follows from the fact that $f=u_-/u_+$ is odd under reflection around the $x_1=0$ axis, since $u_-$ is odd (see Lemma \ref{lemma:positivity_u-}) and $u_+$ is even. 
Since the $L^{-1}$ factor can be absorbed inside $T^{1+\varepsilon}$ with a slight modification of $\varepsilon$, we only have to show that the integral in the right hand side is bounded away from zero uniformly in $L$. This is a consequence of \eqref{eq:strong CV} and of the Sobolev embedding (see the discussion after Proposition~\ref{prop:first_convergence}), which imply that $u_+$ and $u_-$ converge in $L^\infty$ to the same function in $ B(\bx,R)$ for $R$ fixed. This completes the proof.

\subsection{Lower bound on the second gap}\label{sec:second gap}

To prove \eqref{eq:second_gap}, recall that $u_\mathrm{ex}$ is the first excited state above $u_-$, i.e., $\mu_{ex}=\langle u_\mathrm{ex},h_\mathrm{DW} u_\mathrm{ex}\rangle$. A lower bound for $\mu_\mathrm{ex}$ follows from the IMS formula and reads
\begin{equation*}
\mu_\mathrm{ex}\ge 2\langle\chi_{x_1\ge0} u_\mathrm{ex},h_\mathrm{DW}\chi_{x_1\ge0} u_\mathrm{ex}\rangle-C_\varepsilon T^{1-\varepsilon},
\end{equation*}
having used \eqref{eq:localized_gradients_ex} and \eqref{eq:localized_without_anything_ex}. Here $\chi_{x_1\ge0}$ and $\chi_{x_1\le0}$ are localization functions as in Proposition \ref{prop:first_convergence}. We further argue that \eqref{eq:V_DW_to_V_r_ex} and \eqref{eq:difference_mf_potentials} allow to replace $h_\mathrm{DW}$ with $h_r$, i.e.,
\begin{equation} \label{eq:partial_lower_bound_large_gap}
\mu_\mathrm{ex}\ge 2\langle\chi_{x_1\ge0} u_\mathrm{ex},h_r\chi_{x_1\ge0} u_\mathrm{ex}\rangle-C_\varepsilon T^{1/2-\varepsilon}.
\end{equation}
We now want to bound from below the right hand side using (a suitable modification of) $\chi_{x_1\ge0}u_\mathrm{ex}$ as a trial state for the minimization problem
\begin{equation*}
\mu_{r,\mathrm{ex}}:=\inf\Big\{ \langle u, h_r  u\rangle\;|\; \|u\|_{L^2}=1,\; u\perp u_r \Big\}.
\end{equation*}
Define then
\begin{equation*}
v:= \frac{ \chi_{x_1\ge0} u_\mathrm{ex}-2 \langle u_r,\chi_{x_1\ge0}u_\mathrm{ex}\rangle u_r}{\big\|\chi_{x_1\ge0} u_\mathrm{ex}-2 \langle u_r,\chi_{x_1\ge0}u_\mathrm{ex}\rangle u_r\big\|_{L^2}}.
\end{equation*}
By construction $v$ is orthogonal to $u_r$ (since $\|u_r\|_{L^2}^2=1/2$), which makes it a trial function for the $\mu_{r,\mathrm{ex}}$ minimization. We want to estimate the norm in the denominator. Recall that $u_\mathrm{ex}$ must be either even or odd under reflection across the $\{x_1=0\}$ hyperplane. Assume it is even. Then
\begin{equation*}
0=\langle u_+, u_\mathrm{ex}\rangle= \langle \chi_{x_1\ge0}u_+, \chi_{x_1\ge0}u_\mathrm{ex}\rangle+\langle\chi_{x_1\le0} u_+, \chi_{x_1\le0}u_\mathrm{ex}\rangle=2\langle \chi_{x_1\ge0}u_+, \chi_{x_1\ge0}u_\mathrm{ex}\rangle,
\end{equation*}
which implies
\begin{equation*}
\begin{split}
\big\|\chi_{x_1\ge0} u_\mathrm{ex}-2 \langle u_r,\chi_{x_1\ge0}u_\mathrm{ex}\rangle u_r\big\|_{L^2}^2=\;&\frac{1}{2}-2 \big| \langle u_r,\chi_{x_1\ge0}u_\mathrm{ex}\rangle \big|^2\\
=\;&\frac{1}{2}-2\big| \big\langle \big( u_r-\chi_{x_1\ge0}u_+ \big),\chi_{x_1\ge0} u_\mathrm{ex}\big\rangle \big|^2\\
\ge\;&\frac{1}{2}-C_\varepsilon T^{1-\varepsilon},
\end{split}
\end{equation*}
where the inequality follows by Cauchy-Schwartz and by approximation of $u_+$ that we deduced in \eqref{eq:l_r_L2_norm_u+}. If, on the other hand, $u_\mathrm{ex}$ is odd, then we can repeat the same calculation with $u_+$ replaced by $u_-$. The variational principle then implies
\begin{equation*}
\begin{split}
\mu_{r,\mathrm{ex}} \le\;& \frac{\langle \chi_{x_1\ge0} u_\mathrm{ex},h_r \chi_{x_1\ge0} u_\mathrm{ex}\rangle-2\mu_r \big| \langle u_r,\chi_{x_1\ge0}u_\mathrm{ex}\rangle \big|^2}{\big\|\chi_{x_1\ge0} u_\mathrm{ex}-2 \langle u_r,\chi_{x_1\ge0}u_\mathrm{ex}\rangle u_r\big\|_{L^2}^2}\\
\le\;&2\langle \chi_{x_1\ge0} u_\mathrm{ex},h_r \chi_{x_1\ge0} u_\mathrm{ex}\rangle+C_\varepsilon T^{1-\varepsilon},
\end{split}
\end{equation*}
having ignored the second term in the numerator because it is negative. Comparing this with \eqref{eq:partial_lower_bound_large_gap} we find
\begin{equation*}
\mu_{\mathrm{ex}}\ge \mu_{r,\mathrm{ex}}-C_\varepsilon T^{1/2-\varepsilon}.
\end{equation*}
Now, we know that the spectrum of $h_r$ does not depend on $L$, since $h_r$ coincides with the translation of a fixed Hamiltonian. Hence, the gap between $\mu_r$ and $\mu_{r,\mathrm{ex}}$ is a fixed constant. Moreover, by \eqref{eq:upper_bound_u-}, we have
\begin{equation*}
\mu_r \ge  \mu_--C_\varepsilon T^{1/2-\varepsilon}.
\end{equation*}
This gives
\begin{equation*}
\mu_{\mathrm{ex}}\ge \mu_{r,\mathrm{ex}}-C_\varepsilon T^{1/2-\varepsilon} \ge \mu_r +C-C_\varepsilon T^{1/2-\varepsilon}  \ge  \mu_- +C-C_\varepsilon T^{1/2-\varepsilon},
\end{equation*}
which proves \eqref{eq:second_gap}.

\section{Refined estimates, following Helffer-Sj\"ostrand} \label{sect:refined}

The aim of this section is to prove~\eqref{eq:L1_convergence},~\eqref{eq:L2_convergence}, and~\eqref{eq:Linfty_convergence}. The optimal rate in \eqref{eq:L1_convergence} will follow from a careful choice of approximating quasi-modes inspired by~\cite{HelSjo-84,HelSjo-85}. 

Let us denote by $u^{(D)}_r$ the (normalized) ground state of the Dirichlet problem
\begin{equation} \label{eq:dirichlet_r}
\begin{cases}
\Big(-\Delta+V_\mathrm{DW}+\lambda w*|u_+|^2\Big)u=\mu u\\
u(x)=0,\qquad\text{for } x_1\le -\frac{L}{2}+c
\end{cases}
\end{equation}
with eigenvalue $\mu^{(D)}$. Let us, in turn, denote by $u_\ell^{(D)}$ the (normalized) ground state of the Dirichlet problem
\begin{equation}  \label{eq:dirichlet_ell}
\begin{cases}
\Big(-\Delta+V_\mathrm{DW}+\lambda w*|u_+|^2\Big)u=\mu u\\
u(x)=0,\qquad\text{for } x_1\ge -\frac{L}{2}+c.
\end{cases}
\end{equation}
By symmetry across the $\{x_1=0\}$ hyperplane we have $u_\ell^{(D)}(-x_1,x_2,\dots,x_d)=u_r^{(D)}(x)$ and therefore the eigenvalue corresponding to $u^{(D)}_\ell$ coincides with $\mu^{(D)}$.

The cutoff distance $c>0$ will eventually be chosen to depend (non-uniformly) on the parameter $\varepsilon$ appearing in the right hand side of \eqref{eq:L1_convergence}, which we will take arbitrarily small. As a consequence, since most quantities depends on $c$, they will implicitly depend on $\varepsilon$. We will however not keep track of such a dependence.

\subsection{Agmon decay estimates}

The first step in the proof of \eqref{eq:L1_convergence} is to show suitable decay estimates for $u^{(D)}_r$ and $u^{(D)}_\ell$. These will be more refined that what we have proved so far. 

We start by defining the double-well Agmon distance between two points $x,y\in\mathbb{R}^d$
\begin{equation} \label{eq:Agmon_distance}
\begin{split}
d_\mathrm{DW}(x,y)=\;&\inf_{\gamma\text{ piecewise }C^1\text{ curve}}\Big\{ \int_0^1\sqrt{V_\mathrm{DW}(\gamma(t))}|\gamma'(t)|\,dt,\;|\;\gamma(0)=x,\,\gamma(1)=y  \Big\}.
\end{split}
\end{equation}
The exponentials of the functions $d_\mathrm{DW}(\,\cdot\,,\bx)$ and $d_\mathrm{DW}(\,\cdot\,,-\bx)$ will model the decay of, respectively, $u^{(D)}_r$ and $u^{(D)}_\ell$. The following general properties are well-known (see, e.g., \cite[Equations (3.2.1) and (3.2.2)]{Helffer-88})
\begin{align} \label{eq:first_property_d}
d_\mathrm{DW}(x,y)\le&\; d_\mathrm{DW}(x,z)+d_\mathrm{DW}(z,y),\quad\forall x,y,z \qquad\text{(triangular inequality)}\\\label{eq:second_property_d}
|\nabla_x d_\mathrm{DW}(x,y)|^2\le&\; V_\mathrm{DW}(x),\quad\forall x,y.
\end{align}

Furthermore, we have the following Lemma.

\begin{lemma}[\textbf{Properties of the double-well Agmon distance}]\mbox{}\label{lemma:distance}\\ 
	The function $d_\mathrm{DW}(\,\cdot\,,\bx)$ satisfies the three following properties, with $A$ the single-well Agmon distance~\eqref{eq:Agmon} and $c$ the constant in~\eqref{eq:dirichlet_r}-~\eqref{eq:dirichlet_ell}:
	\begin{itemize}
		\item[$(i)$]\textbf{First estimate in the half-space:}
		\begin{equation} \label{eq:distance_first_estimate}
		\begin{split}
		d_{\mathrm{DW}}(x,\bx)\ge\;& A(|x-\bx|)\qquad x_1\ge0,\\
		d_{\mathrm{DW}}(x,-\bx)\ge\;& A(|x+\bx|)\qquad x_1\le0.
		\end{split}
		\end{equation}
		\item[$(ii)$] \textbf{Estimate at }$\mathbf{(x_2,\dots,x_d)=0}$\textbf{:}
		\begin{equation}\label{eq:distance_line}
		\begin{split}
		d_\mathrm{DW}\big((x_1,0,\dots,0),\bx\big)\ge\;&\begin{cases}
		A\big(\big|x_1-\frac{L}{2}\big|\big),\quad &x_1\ge0\\
		2A(\frac{L}{2})-A\big( \big| \frac{L}{2}+x_1 \big| \big),\quad &-L/2+c \le x_1\le0.
		\end{cases}\\
		d_\mathrm{DW}\big((x_1,0,\dots,0),-\bx\big)\ge\;&\begin{cases}
		A\big(\big|x_1+\frac{L}{2}\big|\big),\quad &x_1\le0\\
		2A(\frac{L}{2})-A\big( \big| \frac{L}{2}-x_1 \big| \big),\quad &0 \le x_1\le \frac{L}{2}-c.
		\end{cases}
		\end{split}
		\end{equation}
		\item[$(iii)$]\textbf{Second estimate in the half space:}
		\begin{equation} \label{eq:distance_second_estimate}
		\begin{split}
		d_\mathrm{DW}\big( (x_1,x_2,\dots,x_d),\bx \big)\ge\;&2A\Big(\frac{L}{2}\Big)-A\Big( \Big| \frac{L}{2}+x_1 \Big| \Big),\qquad -\frac{L}{2}+c \le x_1\le0.\\
		d_\mathrm{DW}\big( (x_1,x_2,\dots,x_d),-\bx \big)\ge\;&2A\Big(\frac{L}{2}\Big)-A\Big( \Big| \frac{L}{2}-x_1 \Big| \Big),\qquad 0 \le x_1\le \frac{L}{2}-c.
		\end{split}
		\end{equation}
	\end{itemize}
\end{lemma}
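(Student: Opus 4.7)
The plan is to treat Part (i) by a reflection trick across the $\{y_1=0\}$ hyperplane, and Parts (ii)--(iii) by a projection of paths onto the $x_1$-axis.

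For Part (i), given $x$ with $x_1\ge0$ and any piecewise $C^1$ path $\gamma:[0,1]\to\mathbb{R}^d$ from $\bx$ to $x$, the idea is to produce a modified path $\tilde\gamma$ lying entirely in $\{y_1\ge0\}$ and with the same Agmon integral. Let $R:(y_1,y_2,\ldots,y_d)\mapsto(-y_1,y_2,\ldots,y_d)$, and set $\tilde\gamma(t)=R\gamma(t)$ where $\gamma_1(t)<0$, $\tilde\gamma(t)=\gamma(t)$ otherwise. Continuity is preserved because $R$ fixes $\{y_1=0\}$; the endpoints $\bx$ and $x$, both in $\{y_1\ge0\}$, are untouched; and since $R$ is an isometry with $|Ry-\bx|=|y+\bx|$, one has $V_r(R\gamma)=V_\ell(\gamma)=V_{\mathrm{DW}}(\gamma)$ on left-half-space portions, so that
\[
\int_0^1\sqrt{V_{\mathrm{DW}}(\tilde\gamma)}\,|\tilde\gamma'|\,dt=\int_0^1\sqrt{V_{\mathrm{DW}}(\gamma)}\,|\gamma'|\,dt.
\]
On $\tilde\gamma\subset\{y_1\ge0\}$ the integrand equals $|\tilde\gamma-\bx|^{s/2}|\tilde\gamma'|$, and the elementary radial estimate $\bigl|\frac{d}{dt}|\tilde\gamma-\bx|\bigr|\le|\tilde\gamma'|$ then yields
\[
\int_0^1|\tilde\gamma-\bx|^{s/2}|\tilde\gamma'|\,dt\ge\left|\int_0^1|\tilde\gamma-\bx|^{s/2}\frac{d}{dt}|\tilde\gamma-\bx|\,dt\right|=A(|x-\bx|),
\]
using the antiderivative $A(\rho)=\rho^{1+s/2}/(1+s/2)$. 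Taking the infimum over $\gamma$ yields the first line of~\eqref{eq:distance_first_estimate}; the second follows by the symmetric argument around $-\bx$.

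For Parts (ii) and (iii), the plan is to project onto the $x_1$-axis. Define the one-dimensional potential $\tilde V(s):=V_{\mathrm{DW}}(s,0,\ldots,0)=\min\{(s-L/2)^s,(s+L/2)^s\}$. Since $V_{\mathrm{DW}}(y)\ge\tilde V(y_1)$ and $|\gamma'(t)|\ge|\gamma_1'(t)|$ for any path $\gamma$, one obtains
\[
\int_0^1\sqrt{V_{\mathrm{DW}}(\gamma)}\,|\gamma'|\,dt\ge\int_0^1\sqrt{\tilde V(\gamma_1)}\,|\gamma_1'|\,dt\ge\int_{x_1}^{L/2}\sqrt{\tilde V(s)}\,ds,
\]
where the last inequality is the standard one-dimensional fact that a curve from $L/2$ to $x_1$ cannot have less total length than the direct segment, in a non-negative metric (backtracking only increases length). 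For $x_1\in[-L/2,0]$ the right-hand side computes as
\[
\int_0^{L/2}(L/2-s)^{s/2}\,ds+\int_{x_1}^{0}(s+L/2)^{s/2}\,ds=A(L/2)+\bigl(A(L/2)-A(|L/2+x_1|)\bigr),
\]
yielding $2A(L/2)-A(|L/2+x_1|)$. This gives the bound claimed in~\eqref{eq:distance_second_estimate} (and the second case in~\eqref{eq:distance_line}) for $x_1\in[-L/2+c,0]$; the first case in~\eqref{eq:distance_line}, for $x_1\ge0$, is Part (i) specialized to the axis. The symmetric bounds around $-\bx$ follow from the invariance of $V_{\mathrm{DW}}$ under $R$.

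The main subtlety I expect is making the reflection surgery in Part (i) fully rigorous for a path with potentially infinitely many crossings of $\{y_1=0\}$. This is handled by approximating the infimum in the definition of $d_{\mathrm{DW}}$ by piecewise linear (or at least transverse) minimizing sequences with only finitely many crossings, where the reflection construction is literally the one described, and then passing to the limit.
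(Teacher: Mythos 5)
Your proof is correct and, for Part (i), takes a genuinely different (and arguably more careful) route than the paper. The paper argues that, since $V_{\mathrm{DW}}$ is radial around $\bx$ in the half-space $\{x_1\ge 0\}$, so must $d_{\mathrm{DW}}(\cdot,\bx)$ be there, which lets it reduce to $x$ on the axis and then project paths onto the $x_1$-axis. That radiality claim is informal: rotations around $\bx$ do not globally preserve $V_{\mathrm{DW}}$ (they move the left well), and a direct projection onto the $x_1$-axis would only give $A(|x_1-L/2|)\le A(|x-\bx|)$, which is the wrong direction for a general off-axis $x$. Your reflection surgery $\tilde\gamma(t)=\big(|\gamma_1(t)|,\gamma_2(t),\dots,\gamma_d(t)\big)$ sidesteps this cleanly: it preserves the Agmon integrand exactly because $V_{\mathrm{DW}}$ is invariant under $R$ and $R$ is an isometry, and once the path lies in $\{y_1\ge 0\}$ the radial estimate and the antiderivative $A$ finish the job. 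The concern about infinitely many crossings, which you raise at the end, is in fact even milder than you suggest: $t\mapsto|\gamma_1(t)|$ is Lipschitz with $\big|\tfrac{d}{dt}|\gamma_1|\big|=|\gamma_1'|$ a.e., so $\tilde\gamma$ is Lipschitz with $|\tilde\gamma'|=|\gamma'|$ a.e., and the Agmon integral and the chain-rule computation go through directly for Lipschitz curves without any need to approximate by transverse ones. Parts (ii) and (iii) coincide in spirit with the paper's argument: projection onto the $x_1$-axis using $V_{\mathrm{DW}}(y)\ge V_{\mathrm{DW}}(y_1,0,\dots,0)$ and $|\gamma_1'|\le|\gamma'|$, followed by the one-dimensional antiderivative computation (the paper splits the integral at the crossing time $t_{\tilde\gamma}$, you use the antiderivative $G$ of $\sqrt{\tilde V}$ directly; these are the same calculation). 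Two small typos worth fixing if this were to be written up: the exponent and the dummy variable in $\tilde V(s)=\min\{|s-L/2|^{s},|s+L/2|^{s}\}$ clash notationally (rename the integration variable), and the absolute values in that min are needed.
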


\begin{proof}
	For each of the three points we will only prove the property for $d_\mathrm{DW}(\,\cdot\,,\bx)$, since the one for $d_\mathrm{DW}(\,\cdot\,,-\bx)$ can be then deduced by reflection symmetry.
	
	Let us prove $(i)$. First, notice that, in the $x_1\ge0$ region, $V_\mathrm{DW}$ only depends on the radial coordinate $|x-\bx|$. Hence, the same must be true for $d_\mathrm{DW}(\,\cdot\,,\bx)$, and thus, without loss of generality, we can assume $x=(x_1,0,\dots,0)$, the general case being then deduced from
	\begin{equation*}
	d_\mathrm{DW}(x,\bx)=d_\mathrm{DW}\Big(\big(\big||x-\bx|-L/2\big|,0,\dots,0\big),\bx\Big).
	\end{equation*}
	Let us now prove that, in order to compute $d_\mathrm{DW}\big((x_1,0,\dots,0),\bx\big)$ for $x_1\ge0$ we can reduce ourselves, in the definition of $d_\mathrm{DW}(\,\cdot\,,\bx)$, to curves supported on the line $x_2,\dots,x_d=0$ only. Indeed, for any piecewise $C^1$ curve $\gamma:[0,1]\to\mathbb{R}^d$ such that $\gamma(0)=\bx$ and $\gamma(1)=(x_1,0,\dots,0)$, let us define the curve projected onto the $x_2,\dots,x_d=0$ line
	\begin{equation*}
	\gamma_1(t):=(\gamma(t)_1,0,\dots,0).
	\end{equation*}
	Then, by definition,
	\begin{equation*}
	|\gamma_1'(t)|\le|\gamma'(t)|.
	\end{equation*}
	Since $V_\mathrm{DW}(y)\ge V_\mathrm{DW}\big( (y_1,0,\dots,0) \big)$ for any $y\in\mathbb{R}^d$, we find	
	\begin{equation*}
	\int_0^1\sqrt{V_\mathrm{DW}(\gamma(t))}|\gamma'(t)|\,dt \ge \int_0^1\sqrt{V_\mathrm{DW}(\gamma_1(t))}|\gamma_1'(t)|\,dt.
	\end{equation*}
	This shows that it is always favorable to only consider paths restricted to the line. Let then $\widetilde{\gamma}:[0,1]\to\mathbb{R}$ be a piecewise $C^1$ curve such that $\widetilde{\gamma}(0)=L/2$ and $\widetilde{\gamma}(1)=x_1$. We have
	\begin{equation*}
	\begin{split}
	A(|x-\bx|)=\;&\frac{1}{1+\frac{s}{2}}\Big|x_1-\frac{L}{2}\Big|^{1+\frac{s}{2}}\\
	=\;&\frac{1}{1+\frac{s}{2}}\int_0^1\frac{d}{dt}\Big|\widetilde\gamma(t)-\frac{L}{2}\Big|^{1+\frac{s}{2}}dt\\
	\le\;&\int_0^1\Big|\widetilde{\gamma}(t)-\frac{L}{2}\Big|^{s/2}\,|\widetilde{\gamma}'(t)|\,dt\\
	=\;&\int_0^1 \sqrt{V_\mathrm{DW}\big((\widetilde{\gamma}(t),0,\dots,0)\big)}|\widetilde\gamma'(t)|\,dt.
	\end{split}
	\end{equation*}
	Considering the infimum over all such curves $\widetilde{\gamma}$ (which we proved above to coincide with the infimum over all curves), we deduce \eqref{eq:distance_first_estimate}.
	
	Let us now prove $(ii)$. The claim for $x_1\ge0$ already follows from \eqref{eq:distance_first_estimate}. We concentrate on $x_1\le0$. By repeating the arguments used above, one easily sees that in order to compute $d_\mathrm{DW}((x_1,0,\dots,0),\bx)$ for $x_1\le0$ it is again convenient to restrict to curves supported on $x_2,\dots,x_d=0$. Let then $\widetilde{\gamma}:[0,1]\to\mathbb{R}$ be any piecewise $C^1$ curve such that $\widetilde{\gamma}(0)=L/2$ and $\widetilde{\gamma}(1)=x_1$. Since $x_1\le0$, there exists a time $t_{\widetilde{\gamma}}$, depending on the choice of the curve, such that $\widetilde{\gamma}(t_{\widetilde\gamma})=0$. We then write
	\begin{equation*}
	\begin{split}
	2A\Big(\frac{L}{2}\Big)-A\Big(\, \Big|\frac{L}{2}+x_1\Big|\, \Big)=\;&\int_0^{t_{\widetilde{\gamma}}}\frac{d}{dt}A\Big(\, \Big| \widetilde\gamma(t)-\frac{L}{2} \Big|\,\Big)\,dt\\
	&+\int_{t_{\widetilde{\gamma}}}^1\frac{d}{dt}\Big[ 2A\Big(\frac{L}{2}\Big)-A\Big(\,\Big|\frac{L}{2}+\widetilde{\gamma}(t)\Big|\,\Big) \Big]\,dt\\
	\le\;& \int_0^{t_{\widetilde{\gamma}}}\Big| \widetilde{\gamma}(t)-\frac{L}{2} \Big|^{s/2}|\widetilde{\gamma}'(t)|\,dt+\int_{t_{\widetilde{\gamma}}}^1\Big| \widetilde{\gamma}(t)+\frac{L}{2} \Big|^{s/2}|\widetilde{\gamma}'(t)|\,dt\\
	=\;&\int_0^1 \sqrt{V_\mathrm{DW}\big(\widetilde{\gamma}(t)\big)}|\widetilde{\gamma}'(t)|\,dt.
	\end{split}
	\end{equation*}
	Taking the infimum over all such curves $\widetilde{\gamma}$ yields the result.
	
	Finally, $(iii)$ is deduced by arguing, as done above, that projecting a curve onto the $x_2,\dots,x_d=0$ line cannot increase $d_\mathrm{DW}(\,\cdot\,,\bx)$.
\end{proof}

The following proposition gives decay estimates for $u^{(D)}_r$ and $u^{(D)}_\ell$.

\begin{proposition}[\textbf{Decay estimates for the Dirichlet modes}]\mbox{}  \label{prop:decay_dirichlet}\\
	For every $\varepsilon\ge0$ there exist $C_\varepsilon>0$ and $c_\varepsilon>0$ such that
	\begin{equation} \label{eq:agmon_norm_estimate}
	\Big\| e^{(1-\varepsilon)d_\mathrm{DW}(\,\cdot\,,\bx)}u_r^{(D)}\Big\|_{H^1}=\Big\| e^{(1-\varepsilon)d_\mathrm{DW}(\,\cdot\,,.-\bx)}u_\ell^{(D)}\Big\|_{H^1}\le C_\varepsilon
	\end{equation}
	and
	\begin{equation} \label{eq:agmon_norm_estimate_gradient}
	\Big\| e^{(1-\varepsilon)d_\mathrm{DW}(\,\cdot\,,\bx)}\nabla u_r^{(D)}\Big\|_{L^2}=\Big\| e^{(1-\varepsilon)d_\mathrm{DW}(\,\cdot\,,-\bx)}\nabla u_\ell^{(D)}\Big\|_{L^2}\le C_\varepsilon,
	\end{equation}
	where $u^{(D)}_r$, respectively $u^{(D)}_\ell$, is the ground state of \eqref{eq:dirichlet_r}, respectively \eqref{eq:dirichlet_ell}, (extended to zero outside of its domain of definition) for $c=c_\varepsilon$.
\end{proposition}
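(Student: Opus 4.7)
The plan is to adapt the Agmon-type estimate of Lemma~\ref{lemma:first_agmon_lemma} to the Dirichlet problem~\eqref{eq:dirichlet_r}, with weight function $\Phi=(1-\varepsilon)d_\mathrm{DW}(\,\cdot\,,\bx)$. I will focus on $u_r^{(D)}$, the argument for $u_\ell^{(D)}$ being symmetric.

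First, I would re-run the integration-by-parts argument of Lemma~\ref{lemma:first_agmon_lemma}, but working on the half-space $\Omega_c:=\{x_1>-L/2+c\}$ and testing the eigenvalue equation against $\theta_j e^{2\Phi_k}u_r^{(D)}$, where $\theta_j$ is the smooth cutoff introduced previously and $\Phi_k=\min\{\Phi,k\}$. The Dirichlet condition $u_r^{(D)}\equiv 0$ on $\partial\Omega_c$ kills the boundary term produced by integration by parts, so exactly the same manipulation yields
\begin{equation*}
\int_{\Omega_c}\left|\nabla\!\left(e^{\Phi}u_r^{(D)}\right)\right|^2\,dx+\int_{\Omega_c}\left(W^{(D)}-|\nabla\Phi|^2\right)e^{2\Phi}|u_r^{(D)}|^2\,dx\le 0,
\end{equation*}
where $W^{(D)}:=V_\mathrm{DW}+\lambda w*|u_+|^2-\mu^{(D)}$. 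Crucially $d_\mathrm{DW}(\,\cdot\,,\bx)$ is locally Lipschitz (a consequence of the metric definition~\eqref{eq:Agmon_distance}), and~\eqref{eq:second_property_d} provides the pointwise bound $|\nabla\Phi|^2\le(1-\varepsilon)^2 V_\mathrm{DW}$ needed to apply the lemma.

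Next I would establish the uniform-in-$L$ bound $0<\mu^{(D)}\le C$. Pick a smooth cutoff $\eta$ that equals $1$ on $\{x_1\ge-L/2+c+1\}$ and vanishes on $\{x_1\le-L/2+c\}$, and use $\eta u_r/\|\eta u_r\|_{L^2}$ as a trial function for the Dirichlet problem. Expanding and using $V_\mathrm{DW}\le V_r$ everywhere and $\|\lambda w*|u_+|^2\|_{L^\infty}\le C$, this reduces to $\langle u_r,h_r u_r\rangle=\mu_r/2$ plus a kinetic cross-term $\int|\nabla\eta|^2|u_r|^2$ supported near $x_1=-L/2+c$, which is $O(T^{1-\varepsilon})$ by the decay estimate~\eqref{eq:decay_u_r}. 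Dividing by $\|\eta u_r\|_{L^2}^2=1/2+O(T^{1-\varepsilon})$ gives $\mu^{(D)}\le 2\mu_r+o(1)$, hence uniform boundedness.

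Armed with this, I would then repeat the splitting argument of Proposition~\ref{prop:decay_estimates}: define $\Omega_\kappa:=\{(2\varepsilon-\varepsilon^2)V_\mathrm{DW}\ge\kappa+\mu^{(D)}\}$, on which $W^{(D)}-|\nabla\Phi|^2\ge\kappa$. Split the integral and use $W^{(D)}-|\nabla\Phi|^2\ge-\mu^{(D)}$ on $\Omega_\kappa^c$, where $e^{2\Phi}\le C_\varepsilon$ because $\Omega_\kappa^c$ is a bounded neighborhood of $\{\pm\bx\}$ whose size is $L$-independent. Setting $\kappa=1$, this gives
\begin{equation*}
\int_{\Omega_c}\left|\nabla\!\left(e^{\Phi}u_r^{(D)}\right)\right|^2\,dx+\int_{\Omega_c}e^{2\Phi}|u_r^{(D)}|^2\,dx\le C_\varepsilon,
\end{equation*}
which, after extending $u_r^{(D)}$ by zero, is exactly~\eqref{eq:agmon_norm_estimate}. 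Finally, to deduce~\eqref{eq:agmon_norm_estimate_gradient}, I expand $\nabla(e^\Phi u_r^{(D)})=e^\Phi\nabla u_r^{(D)}+e^\Phi u_r^{(D)}\nabla\Phi$ and apply Cauchy--Schwarz with a small absorbing constant; the resulting unwanted term $\int e^{2\Phi}|u_r^{(D)}|^2|\nabla\Phi|^2\le\int e^{2\Phi}V_\mathrm{DW}|u_r^{(D)}|^2$ is controlled by trading a smaller $\varepsilon'<\varepsilon$ in the Agmon weight (polynomial growth of $V_\mathrm{DW}$ is beaten by the residual exponential), hence gives the desired $H^1$-bound. The main obstacle is really a matter of bookkeeping: verifying that the dependence on $c=c_\varepsilon$ is benign, i.e.\ that the bound on $\mu^{(D)}$ and the location of $\Omega_\kappa^c$ remain $L$-independent even as $\varepsilon$ (and hence $c$) is chosen small.
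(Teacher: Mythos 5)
Your overall strategy is sound and close to the paper's: the paper uses the variant Agmon estimate of Lemma~\ref{lemma:F+-} (with the $F_\pm$ decomposition), while you use the more elementary Lemma~\ref{lemma:first_agmon_lemma} adapted to the half-space and then split the integral by hand over $\Omega_\kappa$ and $\Omega_\kappa^c$ as in the proof of Proposition~\ref{prop:decay_estimates}. These two routes are interchangeable here, and your observation that the Dirichlet condition kills the boundary term from integration by parts is correct.

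However, there is a genuine gap in the splitting step. You assert that ``$e^{2\Phi}\le C_\varepsilon$ on $\Omega_\kappa^c$ because $\Omega_\kappa^c$ is a bounded neighborhood of $\{\pm\bx\}$ whose size is $L$-independent.'' This is false as stated. The weight $\Phi=(1-\varepsilon)d_\mathrm{DW}(\,\cdot\,,\bx)$ is \emph{not} symmetric: it measures Agmon distance to $\bx$ only. On the component of $\Omega_\kappa^c$ near $-\bx$ one has, by Lemma~\ref{lemma:distance}, $d_\mathrm{DW}(x,\bx)\approx 2A(L/2)$, so $e^{2\Phi}\sim T^{-2(1-\varepsilon)}\to\infty$; there is no $L$-uniform bound there. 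The entire point of introducing the Dirichlet cutoff at $x_1=-L/2+c$ is to remove that bad region from the domain before the estimate is run: one must choose $c=c_\varepsilon$ \emph{large} enough (of order $((\kappa+\mu^{(D)})/(2\varepsilon-\varepsilon^2))^{1/s}$, hence $c_\varepsilon\to\infty$ as $\varepsilon\to0$, not small as your closing remark suggests) so that the ball around $-\bx$ where $(2\varepsilon-\varepsilon^2)V_\mathrm{DW}<\kappa+\mu^{(D)}$ lies entirely outside $\Omega_c$. With that choice, $\Omega_c\cap\Omega_\kappa^c$ is only the $L$-independent ball around $\bx$, where $d_\mathrm{DW}(\,\cdot\,,\bx)$ is bounded and your splitting argument goes through. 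This is precisely how the paper selects $c_\varepsilon$ before defining $F_\pm$; your proposal treats it as routine ``bookkeeping,'' but it is the step that makes the proposition true rather than false, and must be made explicit.
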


The importance of this result is the following: even though in a region at distance of order 1 from $-\bx$ the total potential $V_\mathrm{DW}+\lambda w*|u_+|^2$ is of order 1, nonetheless $u^{(D)}_r$ is as small as the exponential of minus the Agmon distance from $\bx$. Compared to the estimates proved previously, it confirms that $u^{(D)}_r$ does not see the left well at all. This is the key to prove~\eqref{eq:L1_convergence} with such a good rate. 

We need the following well-known lemma, which vindicates the importance of~\eqref{eq:second_property_d}: 

\begin{lemma}[\textbf{Computing with the Agmon distance}] \label{lemma:F+-}\mbox{}\\
	Let $\Omega\subset\mathbb{R}^d$ be open with regular boundary, $v\in C^0\big(\overline\Omega,\mathbb{R}\big)$, $\Phi:\overline{\Omega}\to\mathbb{R}$ locally Lipschitz and $u\in C^2\big(\overline\Omega,\mathbb{R}\big)$ with $u_{|\,\partial \Omega}=0$ (including $\lim_{|x|\to\infty}u(x)=0$ if $\Omega$ is unbounded). Let $\nabla\Phi$ be defined in $L^\infty$ as the limit of a mollified sequence $\nabla\Phi_\varepsilon$. Define also 
	$$h:=-\Delta+v.$$
	Then
	\begin{equation*}
	\int_\Omega\big|\nabla \big(e^\Phi u\big)\big|^2\,dx+\int_\Omega\big(v-|\nabla\Phi|^2\big)e^{2\Phi}|u|^2\,dx=\int_\Omega e^{2\Phi} u \,(hu)\,dx.
	\end{equation*}
	Moreover, assume $v-|\nabla\Phi|^2=F_+^2-F_-^2$ with $F_+,F_-\ge0$, and define $F:=F_++F_-$. Then
	\begin{equation} \label{eq:estimate_F+-}
	\int_\Omega\big|\nabla\big(e^\Phi u\big) \big|^2\,dx+\frac{1}{2}\int_\Omega\big|F_+e^\Phi u|^2\,dx\le \int_\Omega\big| F^{-1}e^{\Phi} hu\big|^2\,dx+\frac{3}{2}\int_\Omega \big| F_- e^\Phi u \big|^2\,dx.
	\end{equation}
\end{lemma}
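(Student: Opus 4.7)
The plan is to prove the identity first for smooth $\Phi$ via integration by parts, then extend to the locally Lipschitz case by mollification, and finally derive the inequality \eqref{eq:estimate_F+-} from the identity by a weighted Young's inequality.

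For the identity, I would start by assuming $\Phi\in C^1(\overline\Omega)$ and expanding
\begin{equation*}
\bigl|\nabla(e^\Phi u)\bigr|^2 = e^{2\Phi}|\nabla u|^2 + 2 e^{2\Phi} u\, \nabla\Phi\cdot\nabla u + e^{2\Phi}|\nabla\Phi|^2 |u|^2.
\end{equation*}
Separately, integration by parts against $e^{2\Phi} u$ (with vanishing boundary term since $u|_{\partial\Omega}=0$ and, if $\Omega$ is unbounded, $u$ decays at infinity) gives
\begin{equation*}
\int_\Omega e^{2\Phi} u(-\Delta u)\,dx = \int_\Omega e^{2\Phi}|\nabla u|^2\,dx + 2 \int_\Omega e^{2\Phi} u\, \nabla\Phi\cdot\nabla u\,dx.
\end{equation*}
Subtracting the integrated expansion of $|\nabla(e^\Phi u)|^2$ and adding $\int v e^{2\Phi} u^2\,dx$ yields the identity. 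For the locally Lipschitz case I would introduce a mollified family $\Phi_\varepsilon$: by assumption $\nabla\Phi_\varepsilon\to\nabla\Phi$ in $L^\infty$ and $\Phi_\varepsilon\to\Phi$ locally uniformly, so the identity for $\Phi_\varepsilon$ passes to the limit by dominated convergence. If $\Omega$ is unbounded, the integration by parts step should first be performed against $\chi_R e^{2\Phi_\varepsilon} u$ with $\chi_R$ a cutoff equal to $1$ on $B_R$ supported in $B_{2R}$; the cross terms involving $\nabla\chi_R$ vanish as $R\to\infty$ thanks to the assumed decay of $u$ together with the fact that $\Phi$, being locally Lipschitz, has controlled growth on each bounded set.

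Once the identity is in hand, the inequality rewrites as
\begin{equation*}
\int_\Omega |\nabla(e^\Phi u)|^2\,dx + \int_\Omega F_+^2 e^{2\Phi} u^2\,dx = \int_\Omega F_-^2 e^{2\Phi} u^2\,dx + \int_\Omega e^{2\Phi} u(hu)\,dx,
\end{equation*}
and I would bound the last integral by factoring $e^{2\Phi} u(hu) = (F e^\Phi u)(F^{-1} e^\Phi hu)$ (legitimate on the set $\{F>0\}$; elsewhere the integrand is zero if $hu$ is also controlled, which is ensured by the $L^2$ setting) and applying Young's inequality $ab \leq \tfrac{1}{4} a^2 + b^2$ pointwise:
\begin{equation*}
\int_\Omega e^{2\Phi} u(hu)\,dx \leq \frac{1}{4}\int_\Omega F^2 e^{2\Phi} u^2\,dx + \int_\Omega F^{-2} e^{2\Phi} (hu)^2\,dx.
\end{equation*}
The elementary bound $F^2 = (F_+ + F_-)^2 \leq 2F_+^2 + 2 F_-^2$, i.e.\ $\tfrac{1}{4}F^2 \leq \tfrac{1}{2}F_+^2 + \tfrac{1}{2}F_-^2$, and rearranging gives the claimed $\tfrac{1}{2}F_+^2$ on the left and $\tfrac{3}{2}F_-^2$ on the right.

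The only real obstacle is the technical bookkeeping in the mollification/cutoff argument on unbounded $\Omega$; once the identity is available, the inequality is a one-line application of Young's inequality combined with $(F_+ + F_-)^2\leq 2(F_+^2+F_-^2)$. This is entirely in the spirit of the standard Agmon-type identities (cf.\ \cite[Thm.~1.5]{Agmon-82} and the lemmas already used in Section~\ref{sect:bounds}), so no new analytic input is required.
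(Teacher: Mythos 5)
The paper does not actually give a proof of this Lemma; it says "This is similar to Lemma~\ref{lemma:first_agmon_lemma}" and then refers to Agmon, Helffer--Sj\"ostrand, and Helffer. So there is no in-paper proof to compare against; the natural comparison is with the paper's proof of Lemma~\ref{lemma:first_agmon_lemma}. Your derivation of the identity (expanding $|\nabla(e^\Phi u)|^2$, integrating $e^{2\Phi}u(-\Delta u)$ by parts, subtracting, and adding the $v$ term) and your derivation of \eqref{eq:estimate_F+-} (rewriting $e^{2\Phi}u\,hu = (Fe^\Phi u)(F^{-1}e^\Phi hu)$, applying $ab\le\tfrac14 a^2+b^2$, and using $\tfrac14(F_++F_-)^2\le\tfrac12 F_+^2+\tfrac12 F_-^2$) are both correct, and the rearrangement gives exactly the stated constants $\tfrac12$ and $\tfrac32$.

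The one place where your argument is looser than the paper's approach to the analogous Lemma~\ref{lemma:first_agmon_lemma} is the limiting step. You propose a spatial cutoff $\chi_R$ plus mollification of $\Phi$ and then a dominated-convergence pass to the limit, remarking that the $\nabla\chi_R$ cross terms vanish "thanks to the assumed decay of $u$ together with controlled growth of $\Phi$ on bounded sets." That last clause does not actually address the issue: the difficulty is the growth of $e^{2\Phi}$ \emph{at infinity} (in the application $\Phi\sim(1-\varepsilon)d_{\mathrm{DW}}$ is unbounded), and the hypothesis $u(x)\to 0$ alone does not guarantee that $e^{2\Phi}u\nabla u$ is integrable. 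The paper handles this more robustly in Lemma~\ref{lemma:first_agmon_lemma} by replacing $\Phi$ with the bounded truncation $\Phi_k=\min\{\Phi,k\}$ (so that $e^{2\Phi_k}$ is uniformly bounded and the cutoff terms genuinely vanish), letting $j\to\infty$ in the spatial cutoff, and only then recovering the full $\Phi$ via monotone convergence and Fatou. Your scheme works once one adds this truncation-in-$\Phi$ step; as written, the dominated-convergence claim is not justified in the generality stated. This is the only gap; the algebraic core of the proof, including the Young-inequality step producing the $\tfrac12$ and $\tfrac32$, is exactly right.
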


	This is similar to Lemma~\ref{lemma:first_agmon_lemma}. See~\cite[Theorem~1.5]{Agmon-82},~\cite[Lemma 2.3]{HelSjo-85} or \cite[Theorem 3.1.1]{Helffer-88}. We are now ready to prove Proposition \ref{prop:decay_dirichlet}.

\begin{proof}[Proof of Proposition \ref{prop:decay_dirichlet}]
	We will estimate the norms containing $u_r^{(D)}$ only, since, by reflection symmetry, the identities in \eqref{eq:agmon_norm_estimate} and \eqref{eq:agmon_norm_estimate_gradient} are trivial. We will apply \eqref{eq:estimate_F+-} with the following choices (recall that $c$ is the constant that appears in \eqref{eq:dirichlet_r}, its choice will be specified later on):
	\begin{equation*}
	\begin{split}
	\Omega=\;&\Big\{ x\;|\;x_1\ge -\frac{L}{2}+c \Big\}\\
	v=\;&V_\mathrm{DW}+\lambda w*|u_+|^2-\mu^{(D)}\\
	\Phi=\;&(1-\varepsilon)d_\mathrm{DW}(\,\cdot\,,\bx)\\
	u=\;&u^{(D)}_r.
	\end{split}
	\end{equation*}
	We now explain how to choose the functions $F_+,F_-$ and the constant $c$. The main idea is to define $F_+^2$ to be equal to the function $v-|\nabla\Phi|^2$ on the set where $v-|\nabla\Phi|^2$ is larger than some fixed arbitrary positive constant $\kappa$, and to be identically equal to the same $\kappa$ on the set where $v-|\nabla\Phi|^2\le \kappa$. To this end, notice first that
	\begin{equation*}
	\begin{split}
	v(x)-|\nabla\Phi(x)|^2=\;&V_\mathrm{DW}(x)+\lambda w*|u_+|^2(x)-\mu^{(D)}-(1-\varepsilon)^2|\nabla d_\mathrm{DW}(x,\bx)|^2\\
	\ge\;& (2\varepsilon-\varepsilon^2) V_\mathrm{DW}(x)-\mu^{(D)},
	\end{split}	
	\end{equation*}
	having used \eqref{eq:second_property_d} and $w\ge0$ in the second step. This shows that $v-|\nabla\Phi|^2$ can be smaller than a fixed constant only in the regions close to $\bx$ and $-\bx$. As a consequence, for every $\varepsilon>0$, there exists $c_\varepsilon>0$ such that $v(x)-|\nabla\Phi(x)|^2\ge \kappa$ for $-L/2+c_\varepsilon\le x_1\le0$. We pick $c$ equal to such a $c_\varepsilon$ in the definition \eqref{eq:dirichlet_r} of $u_r^{(D)}$.
	The only other region in which $v-|\nabla\Phi|^2$ can be small is the region of small $|x-\bx|$. We take care of this by defining
	\begin{equation*}
	F_+^2(x):=\begin{cases}
	v(x)-|\nabla\Phi(x)|^2,\qquad &|x-\bx|\ge\Big( \frac{\kappa+\mu^{(D)}}{2\varepsilon-\varepsilon^2} \Big)^{1/s}\quad\text{and}\quad x_1\ge -\frac{L}{2}+c_\varepsilon\\
	\kappa,\qquad&|x-\bx|\le\Big( \frac{\kappa+\mu^{(D)}}{2\varepsilon-\varepsilon^2} \Big)^{1/s}.
	\end{cases}
	\end{equation*}
	Correspondingly, we define
	\begin{equation*}
	F_-^2(x)=F_+^2(x)-v(x)+|\nabla\Phi(x)|^2\begin{cases}
	=0,\quad &|x-\bx|\ge\Big( \frac{\kappa+\mu^{(D)}}{2\varepsilon-\varepsilon^2} \Big)^{1/s}\quad\text{and}\quad x_1\ge-\frac{L}{2}+c_\varepsilon\\
	\le \kappa,\quad&|x-\bx|\le\Big( \frac{\kappa+\mu^{(D)}}{2\varepsilon-\varepsilon^2} \Big)^{1/s}.
	\end{cases}
	\end{equation*}
	It is then straightforward to verify that, by construction,
	\begin{equation*}
	F_+^2 \ge \kappa,\qquad F_-^2\le \kappa,\qquad \mathrm{supp}(F_-)\subset\Big\{x\;|\; |x-\bx|\le\Big( \frac{\kappa+\mu^{(D)}}{2\varepsilon-\varepsilon^2} \Big)^{1/s}\Big\}.
	\end{equation*}
	We are then ready to apply \eqref{eq:estimate_F+-}, which yields (recall that $hu=0$)
	\begin{equation*}
	\begin{split}
	\int_{\{x_1\ge-L/2+c_\varepsilon\}}\big| &\nabla\big( e^{(1-\varepsilon)d_\mathrm{DW}(\,\cdot\,,\bx)} u^{(D)}_r \big) \big|^2\,dx+\frac{\kappa}{2}\int_{\{x_1\ge-L/2+c_\varepsilon\}}\big| e^{(1-\varepsilon)d_\mathrm{DW}(\,\cdot\,,\bx)}  u^{(D)}\big|^2\,dx\\
	\le\;& \frac{3\kappa}{2}\int_{\big\{|x-\bx|\le\Big( \frac{\kappa+\mu^{(D)}}{2\varepsilon-\varepsilon^2} \Big)^{1/s}\big\}}\big| e^{(1-\varepsilon)d_\mathrm{DW}(\,\cdot\,,\bx)}u_r^{(D)}\big|^2\,dx.
	\end{split}
	\end{equation*}
	As proven in Lemma \ref{lemma:distance}, the function $d_\mathrm{DW}(x,\bx)$ for $x_1\ge0$ depends on the radial coordinate $|x-\bx|$ only. As a consequence, the integral in the right hand side does not depend on $L$, and therefore it is estimated by a ($\varepsilon$-dependent) constant. This completes the proof of \eqref{eq:agmon_norm_estimate}.
	
	In order to prove \eqref{eq:agmon_norm_estimate_gradient} we write
	\begin{equation*}
	\begin{split}
	\Big\| e^{(1-\varepsilon)d_\mathrm{DW}(\,\cdot\,,\bx)}\nabla u_r^{(D)}\Big\|_{L^2}^2\le\;& \Big\| e^{(1-\varepsilon)d_\mathrm{DW}(\,\cdot\,,\bx)} u_r^{(D)}\Big\|_{H^1}^2+\Big\|\big(\nabla d_\mathrm{DW}(\,\cdot\,,\bx)\big) e^{(1-\varepsilon)d_\mathrm{DW}(\,\cdot\,,\bx)} u_r^{(D)}\Big\|_{L^2}^2\\
	\le\;&\Big\|\sqrt{V_\mathrm{DW}}e^{(1-\varepsilon)d_\mathrm{DW}(\,\cdot\,,\bx)} u_r^{(D)}\Big\|_{L^2}^2+C_\varepsilon,
	\end{split}
	\end{equation*}
	where the second inequality follows from \eqref{eq:second_property_d} and \eqref{eq:agmon_norm_estimate}. Using Lemma \ref{lemma:distance}, $d_\mathrm{DM}(\,\cdot\,,\bx)$ has at least polynomial growth, and therefore one deduces that, for every $\delta>0$, there exists $K_\delta>0$ such that
	\begin{equation*}
	\sqrt{V_\mathrm{DW}}\le K_\delta\, e^{\delta\, d_\mathrm{DW}(\,\cdot\,,\bx)}.
	\end{equation*}
	We then deduce,
	\begin{equation*}
	\begin{split}
	\Big\| e^{(1-\varepsilon)d_\mathrm{DW}(\,\cdot\,,\bx)}\nabla u_r^{(D)}\Big\|_{L^2}^2\le\;&K_\delta^2\Big\|e^{(1-\varepsilon+\delta)d_\mathrm{DW}(\,\cdot\,,\bx)} u_r^{(D)}\Big\|_{L^2}^2+C_\varepsilon\\
	\le\;&K_\delta^2 C_{\varepsilon-\delta}+C_\varepsilon,
	\end{split}
	\end{equation*}
	which proves \eqref{eq:agmon_norm_estimate_gradient} if we fix $\delta<\varepsilon$.
\end{proof}

\subsection{Quasi-modes construction and proof of  $L^1$ and $L^2$ estimates  }

Now we use linear combinations of $u^{(D)}_r$ and $u^{(D)}_\ell$ as quasi-modes for the mean-field Hamiltonian $h_\mathrm{DW}$. A proper smoothing (around respectively $x_1=-L/2+c_\varepsilon$ and $x_1=L/2-c_\varepsilon$) is required. To this end, define a smooth localization function $\chi_r$ such that
\begin{equation*}
\begin{split}
\chi_r(x)=\;&0\qquad x_1\le-\frac{L}{2}+ 2c_\varepsilon
\\
\chi_r(x)=\;&1\qquad x_1\ge-\frac{L}{2}+ 3c_\varepsilon\\
0\le\chi_r(x)\le\;&1,
\end{split}
\end{equation*}
and the corresponding $\chi_\ell(x)=\chi_r(-x_1,x_2,\dots,x_d)$. Define then
\begin{equation}
\psi_r:=\chi_r u^{(D)}_r\qquad \psi_\ell:=\chi_\ell u^{(D)}_\ell
\end{equation}
and
\begin{equation}
r_r:=\big(h_\mathrm{DW}-\mu^{(D)}\big)\psi_r\qquad r_\ell:=\big(h_\mathrm{DW}-\mu^{(D)}\big)\psi_\ell.
\end{equation}
A direct calculation gives
\begin{equation}  \label{eq:expansionrr}
r_r=-2\nabla\chi_r\cdot \nabla u_r^{(D)}-(\Delta\chi_r)u_r^{(D)},
\end{equation}
and therefore
\begin{equation*}
\mathrm{supp}\,(r_r)\subset \{x\;|\;2c_\varepsilon\le|x+\bx|\le 3c_\varepsilon\}.
\end{equation*}
This means that $\psi_r$ and $\psi_\ell$ are quasi-modes for $h_\mathrm{DW}$, the only error coming from the region where, respectively, $\chi_r$ and $\chi_\ell$ differ from zero and one. 

\begin{lemma} [\textbf{Estimates for quasi-modes}]\mbox{} \label{lemma:estimates_for_quasimodes}\\
	We have, with $T$ the tunneling parameter~\eqref{eq:tunnel}
	\begin{align}
	\|r_r\|_{L^2}=\|r_\ell\|_{L^2}\le\;& C_\varepsilon T^{1-\varepsilon} \label{eq:norm_r}\\
	\big|\langle\psi_r,\psi_r\rangle-1\big|=\big|\langle\psi_\ell,\psi_\ell\rangle-1\big|\le\;& C_\varepsilon T^{2-\varepsilon}  \label{eq:norm_psi}\\
	|\langle\psi_r,r_r\rangle|=|\langle\psi_\ell,r_\ell\rangle|\le\;&C_\varepsilon T^{2-\varepsilon} \label{eq:product_psi_r}\\
	0\le\langle\psi_r,\psi_\ell\rangle\le\;& C_\varepsilon T^{1-\varepsilon}. \label{eq:estimate_psi_r_psi_l}
	\end{align}
\end{lemma}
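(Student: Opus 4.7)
The plan is to prove all four estimates by combining the exponential weighted bounds of Proposition~\ref{prop:decay_dirichlet} with the explicit Agmon distance estimates of Lemma~\ref{lemma:distance}. The key observation is that every quantity appearing in the statement is either an integral over, or tested against functions supported in, a region that is far from $\bx$ in the double-well Agmon distance, by a constant multiple of $A(L/2)$.

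For~\eqref{eq:norm_r}, note from~\eqref{eq:expansionrr} that $r_r$ is supported where $\nabla\chi_r$ or $\Delta\chi_r$ is nonzero, i.e.\ in the strip $\{-L/2+2c_\varepsilon\le x_1 \le -L/2+3c_\varepsilon\}$. By~\eqref{eq:distance_second_estimate}, on this strip one has $d_\mathrm{DW}(x,\bx)\ge 2A(L/2)-A(3c_\varepsilon)$, and $A(3c_\varepsilon)=C_\varepsilon$ is fixed in the limit $L\to\infty$; hence $e^{-2(1-\varepsilon)d_\mathrm{DW}(\cdot,\bx)}\le C_\varepsilon T^{2-2\varepsilon}$ pointwise on $\mathrm{supp}\,r_r$. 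I would then insert $1=e^{2(1-\varepsilon)d_\mathrm{DW}(\cdot,\bx)}\cdot e^{-2(1-\varepsilon)d_\mathrm{DW}(\cdot,\bx)}$ in the straightforward bound $\|r_r\|_{L^2}^2 \le C_\varepsilon\int_{\mathrm{supp}\,r_r}(|\nabla u_r^{(D)}|^2+|u_r^{(D)}|^2)\,dx$, factor out the small exponential, and control what remains by Proposition~\ref{prop:decay_dirichlet}, yielding~\eqref{eq:norm_r} after renaming $\varepsilon$. The same exponential-insertion trick, applied to the integral $1-\langle\psi_r,\psi_r\rangle=\int(1-\chi_r^2)|u_r^{(D)}|^2\,dx$ (localized where $\chi_r\ne1$, i.e., in the same far region), yields~\eqref{eq:norm_psi}. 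For~\eqref{eq:product_psi_r}, $\langle\psi_r,r_r\rangle$ is an integral over $\mathrm{supp}\,r_r$ of products of $u_r^{(D)}$ and $\nabla u_r^{(D)}$ against $\nabla\chi_r$ and $\Delta\chi_r$; Cauchy--Schwarz reduces it to $\|\psi_r\mathbf{1}_{\mathrm{supp}\,r_r}\|_{L^2}\cdot\|r_r\|_{L^2}$, both previously estimated as $C_\varepsilon T^{1-\varepsilon}$.

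For the cross term~\eqref{eq:estimate_psi_r_psi_l}, positivity $\langle\psi_r,\psi_\ell\rangle\ge0$ is immediate, since $u_r^{(D)}$ and $u_\ell^{(D)}$ are positive Dirichlet ground states (by the positivity-improving semigroup argument used in the proof of Lemma~\ref{lemma:positivity_u-}) and $\chi_r,\chi_\ell\ge0$. For the upper bound, the key ingredient is the triangle inequality~\eqref{eq:first_property_d}, which gives $d_\mathrm{DW}(x,\bx)+d_\mathrm{DW}(x,-\bx)\ge d_\mathrm{DW}(\bx,-\bx)=2A(L/2)$, the latter equality following by evaluating Lemma~\ref{lemma:distance}(ii) at $x_1=-L/2$. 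Cauchy--Schwarz then yields
$$\langle\psi_r,\psi_\ell\rangle\le \Bigl(\int |u_r^{(D)}|^2 e^{2(1-\varepsilon)d_\mathrm{DW}(\cdot,\bx)}\,dx\Bigr)^{1/2}\Bigl(\int |u_\ell^{(D)}|^2 e^{-2(1-\varepsilon)d_\mathrm{DW}(\cdot,\bx)}\,dx\Bigr)^{1/2},$$
and I would multiply and divide the second integral by $e^{2(1-\varepsilon)d_\mathrm{DW}(\cdot,-\bx)}$, extract $e^{-2(1-\varepsilon)\cdot 2A(L/2)}=T^{2-2\varepsilon}$ as a pointwise bound on the exponential factor this creates, and apply Proposition~\ref{prop:decay_dirichlet} to control what remains. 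The square root then gives~\eqref{eq:estimate_psi_r_psi_l}.

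The conceptual work has already been done in Proposition~\ref{prop:decay_dirichlet} and Lemma~\ref{lemma:distance}; what remains is essentially bookkeeping. The main obstacle I anticipate is keeping careful track of the fact that $c_\varepsilon$ depends only on $\varepsilon$ (not on $L$), so that constants like $A(3c_\varepsilon)$ are genuinely $L$-independent, together with the routine replacement of $\varepsilon$ by $2\varepsilon$ at each step so that the stated exponents in $T$ come out right.
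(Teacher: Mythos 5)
Your proposal is correct and matches the paper's own strategy very closely: in all four estimates the core move is the same insertion of $e^{\pm(1-\varepsilon)d_\mathrm{DW}(\,\cdot\,,\bx)}$ (or $d_\mathrm{DW}(\,\cdot\,,-\bx)$), followed by a pointwise supremum bound on the small exponential factor over the relevant strip and an appeal to Proposition~\ref{prop:decay_dirichlet}. The one genuinely different touch is in your treatment of~\eqref{eq:estimate_psi_r_psi_l}: you invoke the triangle inequality~\eqref{eq:first_property_d} together with $d_\mathrm{DW}(\bx,-\bx)=2A(L/2)$ to get the pointwise bound $d_\mathrm{DW}(x,\bx)+d_\mathrm{DW}(x,-\bx)\ge 2A(L/2)$ in one stroke, whereas the paper runs a two-case argument using parts (i) and (iii) of Lemma~\ref{lemma:distance} plus the monotonicity of $A$. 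Your route is a bit cleaner. Two minor things to tidy up: Lemma~\ref{lemma:distance}(ii) is only stated for $x_1\ge -L/2+c$, so you cannot literally ``evaluate it at $x_1=-L/2$''; instead derive $d_\mathrm{DW}(\bx,-\bx)\ge 2A(L/2)$ by a limiting argument or by a one-line direct computation with the straight-line path. Likewise, in~\eqref{eq:product_psi_r} you say the factor $\|\psi_r\mathbf{1}_{\mathrm{supp}\,r_r}\|_{L^2}$ was ``previously estimated'' — it was not, but the same insertion trick that gave~\eqref{eq:norm_psi} gives it, so the step is fine once spelled out.
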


\begin{proof}
	As usual, we can consider only the right functions. To prove \eqref{eq:norm_r}, let us start from \eqref{eq:expansionrr}. Multiplying and dividing by $e^{(1-\varepsilon)d_\mathrm{DW}(\,\cdot\,,\bx)}$ we find
	\begin{equation*} 
	\begin{split}
	\|r_r\|_{L^2}\le\;&  \big\|e^{-(1-\varepsilon)d_\mathrm{DW}(\,\cdot\,,\bx)} (\Delta \chi_r)e^{(1-\varepsilon)d_\mathrm{DW}(\,\cdot\,,\bx)}u^{(D)}_r\big\|_{L^2}\\
	&+ 2 \big\|e^{-(1-\varepsilon)d_\mathrm{DW}(\,\cdot\,,\bx)} (\nabla \chi_r)e^{(1-\varepsilon)d_\mathrm{DW}(\,\cdot\,,\bx)}\nabla u^{(D)}_r\big\|_{L^2}\\
	\le\;&C\Big( \big\|e^{(1-\varepsilon)d_\mathrm{DW}(\,\cdot\,,\bx)}u_r^{(D)}\big\|_{L^2}+\big\|e^{(1-\varepsilon)d_\mathrm{DW}(\,\cdot\,,\bx)}\nabla u_r^{(D)}\big\|_{L^2}  \Big)\\
	&\qquad\times\sup_{2c_\varepsilon\le x_1+L/2\le 3c_\varepsilon}e^{-(1-\varepsilon)d_{\mathrm{DW}}(x,\bx)}.
	\end{split}
	\end{equation*}
	The two norms inside the parenthesis were estimated in Proposition \ref{prop:decay_dirichlet}. To estimate the supremum, we deduce from \eqref{eq:distance_second_estimate} that
	\begin{equation*}
	\begin{split}
	\sup_{2c_\varepsilon\le x_1+L/2\le 3c_\varepsilon}e^{-(1-\varepsilon)d_{\mathrm{DW}}(x,\bx)}\le\;& \sup_{2c_\varepsilon\le x_1+L/2\le 3c_\varepsilon} e^{-(1-\varepsilon)\big( 2A\big(\frac{L}{2}\big)-A\big( \big| \frac{L}{2}+x_1 \big| \big) \big)}\\
	\le\;& C_\varepsilon e^{-2(1-\varepsilon)A\big( \frac{L}{2} \big)},
	\end{split}
	\end{equation*}
	and this proves \eqref{eq:norm_r}.
	
	To prove \eqref{eq:norm_psi} let us notice that
	\begin{equation}
	\begin{split}
	\big| \langle\psi_r,\psi_r\rangle-1 \big|=\;&\int_{\mathbb{R}^d}( 1-\chi_r^2)|u^{(D)}_r|^2\,dx\\
	\le\;& \int_{2c_\varepsilon\le x_1+L/2\le 3c_\varepsilon} |u^{(D)}_r|^2\,dx.
	\end{split}
	\end{equation}
	We then argue as above by multiplying and dividing by $e^{(1-\varepsilon)d_\mathrm{DW}(\,\cdot\,,\bx)}$. The same can be done to prove \eqref{eq:product_psi_r}.
	
	Finally, let us prove \eqref{eq:estimate_psi_r_psi_l} (notice that the positivity of the scalar product is trivial). We write
	\begin{equation*}
	\begin{split}
	\langle\psi_r,\psi_\ell\rangle =\;& \int \chi_r\chi_\ell u^{(D)}_r u^{(D)}_\ell\,dx\\
	\le\;&\sup_{-L/2+2c_\varepsilon\le x_1\le L/2-2c_\varepsilon}\Big(e^{-(1-\varepsilon)d_\mathrm{DW}(x,\bx)-(1-\varepsilon)d_\mathrm{DW}(x,-\bx)}\Big)\\
	&\qquad\times\int \chi_r\chi_\ell\, e^{(1-\varepsilon)d_\mathrm{DW}(\,\cdot\,,\bx)}u^{(D)}_r e^{(1-\varepsilon)d_\mathrm{DW}(\,\cdot\,,-\bx)} u^{(D)}_\ell\,dx.
	\end{split}
	\end{equation*}
	Using Cauchy-Schwartz and then \eqref{eq:agmon_norm_estimate} we see that the integral in the right hand side is estimated by an $\varepsilon$-dependent constant. To estimate the supremum we write
	\begin{equation*}
	\begin{split}
	\sup_{-L/2+2c_\varepsilon\le x_1\le L/2-2c_\varepsilon}&\Big(e^{-(1-\varepsilon)d_\mathrm{DW}(x,\bx)-(1-\varepsilon)d_\mathrm{DW}(x,-\bx)}\Big)\\
	\le\;& \sup_{-L/2+2c_\varepsilon\le x_1\le 0}\Big(e^{-(1-\varepsilon)d_\mathrm{DW}(x,\bx)-(1-\varepsilon)d_\mathrm{DW}(x,-\bx)}\Big)\\
	&\qquad+\sup_{0\le x_1\le L/2-2c_\varepsilon}\Big(e^{-(1-\varepsilon)d_\mathrm{DW}(x,\bx)-(1-\varepsilon)d_\mathrm{DW}(x,-\bx)}\Big)\\
	\le\;& \sup_{-L/2+2c_\varepsilon\le x_1\le 0}e^{-(1-\varepsilon)\big( 2A\big(\frac{L}{2}\big)-A(|L/2+x_1|) +A(|x+\bx|)\,\big)}\\
	&\qquad+\sup_{0\le x_1\le L/2-2c_\varepsilon} e^{-(1-\varepsilon)\big( 2A\big(\frac{L}{2}\big)-A(|L/2-x_1|) +A(|x-\bx|)\,\big)},
	\end{split}
	\end{equation*}
	where the last inequality follows from \eqref{eq:distance_first_estimate} and \eqref{eq:distance_second_estimate}. However, since the function $A$ is monotone increasing, we have
	\begin{equation*}
	A(|L/2+x_1|) \le A(|x+\bx|) \qquad\text{and}\qquad A(|L/2-x_1|) \le A(|x-\bx|),
	\end{equation*}
	and therefore we find
	\begin{equation*}
	\sup_{-L/2+2c_\varepsilon\le x_1\le L/2-2c_\varepsilon}\Big(e^{-(1-\varepsilon)d_\mathrm{DW}(x,\bx)-(1-\varepsilon)d_\mathrm{DW}(x,-\bx)}\Big)\le e^{-2(1-\varepsilon)A\big(\frac{L}{2}\big)},
	\end{equation*}
	which completes the proof.
\end{proof}

Let us now define the orthogonal projections
\begin{equation*}
P_{\pm}:=\ket{u_+}\bra{u_+}+\ket{u_-}\bra{u_-}\qquad\text{and}\qquad P^\perp_\pm=\1-P_\pm.
\end{equation*}
Our aim is an estimate for the norm of $P_{\pm}^\perp \psi_r$ and $P_\pm^\perp \psi_\ell$. Let us start with the following 

\begin{lemma}[\textbf{Further bounds on $\mu_+$}]\label{lemma:dirichlet_energy}\mbox{}\\
	We have
	\begin{equation*}
	|\mu_+- \mu^{(D)}|\le C_\varepsilon T^{1-\varepsilon}.
	\end{equation*}	
\end{lemma}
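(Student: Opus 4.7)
The plan is to prove the two inequalities $\mu_+\le\mu^{(D)}$ and $\mu^{(D)}\le\mu_+ + C_\varepsilon T^{1-\varepsilon}$ separately, in each case via the variational principle applied with a well-chosen trial state.

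The first inequality is immediate: extending $u_r^{(D)}$ by zero to all of $\mathbb{R}^d$ yields a unit-norm $H^1(\mathbb{R}^d)$ function (since it already vanishes on the Dirichlet boundary $\{x_1 = -L/2+c_\varepsilon\}$), whose $h_\mathrm{DW}$-Rayleigh quotient equals $\mu^{(D)}$ by construction. The min-max principle for $h_\mathrm{DW}$ on $\mathbb{R}^d$ then yields $\mu_+\le\mu^{(D)}$.

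For the reverse inequality I would use
$$v:=\sqrt{2}\,\chi_{x_1\ge 0}\, u_+$$
as a trial state for the Dirichlet problem~\eqref{eq:dirichlet_r}, with $\chi_{x_1\ge 0}$ the partition-of-unity function from Proposition~\ref{prop:first_convergence}. The identity $\chi_{x_1\ge 0}^2+\chi_{x_1\le 0}^2=1$ combined with the even symmetry of $u_+$ yields $\|v\|_{L^2}=1$; and since $\chi_{x_1\ge 0}$ vanishes on $\{x_1 \le -C\}$ while $-L/2 + c_\varepsilon < -C$ for $L$ large, $v$ is an admissible Dirichlet trial state. A direct computation using $h_\mathrm{DW}u_+ = \mu_+ u_+$ and integration by parts gives the standard IMS-type identity
$$\langle v, h_\mathrm{DW} v\rangle = \mu_+ + 2 \int_{\mathbb{R}^d} |\nabla \chi_{x_1\ge 0}|^2\, u_+^2\, dx,$$
so the variational principle yields $\mu^{(D)}\le \mu_+ + 2\int |\nabla \chi_{x_1 \ge 0}|^2 u_+^2\, dx$.

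It remains to bound the error term by $C_\varepsilon T^{1-\varepsilon}$. The integrand is supported in the fixed strip $\{-C \le x_1 \le C\}$, on which $A_\mathrm{DW}(x) \ge A(L/2 - C)$, so the integral decay estimate~\eqref{eq:integral_decay_bound} gives a bound of order $e^{-2(1-\varepsilon)A(L/2-C)}$. This differs from $T^{1-\varepsilon} = e^{-2(1-\varepsilon)A(L/2)}$ only by a factor $e^{O(L^{s/2})}$ coming from a Taylor expansion of $A$; since $T^{-\delta}$ grows like $e^{\mathrm{const}\cdot L^{1+s/2}}$ for any $\delta>0$, this polynomial-in-$L$ correction is absorbed into $T^{-\delta}$ for $L$ large, at the price of an arbitrarily small worsening of $\varepsilon$. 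This Taylor-type loss in the Agmon exponent is the only real subtlety; the rest of the argument follows directly from Proposition~\ref{prop:decay_estimates} and the partition-of-unity construction already used earlier in the paper.
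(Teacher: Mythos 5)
Your proof is correct and follows essentially the same two-sided variational strategy as the paper: it bounds $\mu^{(D)}$ from above using $\sqrt2\,\chi_{x_1\ge 0}u_+$ as a Dirichlet trial state (the paper's \eqref{eq:lower_bound_u_+} combined with the Dirichlet variational principle is the same step, and your exact IMS identity makes the error structure slightly more transparent), and it bounds $\mu_+$ from above via a trial state built from $u_r^{(D)}$. The only genuine difference is on this second, easy direction: you use the zero-extension of $u_r^{(D)}$ directly, getting the clean inequality $\mu_+\le\mu^{(D)}$ with no error, while the paper uses the smoothed quasi-mode $\psi_r=\chi_r u_r^{(D)}$ and picks up an $O(T^{2-\varepsilon})$ remainder; both are valid, and yours is marginally simpler since the min-max principle only requires form-domain membership.
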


\begin{proof}
	An upper bound is deduced by taking $\psi_r$ as trial function for the $\mu_+$-minimization problem:
	\begin{equation*}
	\mu_+\le \frac{1}{\langle\psi_r,\psi_r\rangle}\langle \psi_r,h_\mathrm{DW}\psi_r\rangle = \mu^{(D)}+\frac{\langle\psi_r,r_r\rangle}{\langle\psi_r,\psi_r\rangle}\le \mu^{(D)}+C_\varepsilon T^{2-\varepsilon},
	\end{equation*}
	where the second inequality follows from \eqref{eq:product_psi_r} and \eqref{eq:norm_psi}. A suitable lower bound, in turn, was already proven in \eqref{eq:lower_bound_u_+}, i.e.,
	\begin{equation*}
	\mu_+\ge 2\langle\chi_{x_1\ge0} u_+, h_\mathrm{DW} \chi_{x_1\ge0} u_+\rangle-C_\varepsilon T^{1-\varepsilon}\ge \mu^{(D)}-C_\varepsilon T^{1-\varepsilon},
	\end{equation*}
	where the second inequality follows once again by the variational principle for the Dirichlet minimization.
\end{proof}

As a consequence of Lemma \ref{lemma:dirichlet_energy}, and of our main results on the gaps \eqref{eq:first_gap} and \eqref{eq:second_gap}, we see that $\mu^{(D)}$ is asymptotically close to $\mu_+$ (and therefore to $\mu_-$), and hence it is separated from the rest of the spectrum of $h_\mathrm{DW}$ by a gap of order one. We can then write
\begin{equation*}
P^\perp_\pm\psi_r=-\frac{1}{2\pi i}\oint_{\Gamma} \Big( \frac{1}{\mu^{(D)}-z}-\frac{1}{h_\mathrm{DW}-z} \Big)\,dz\,\psi_r,
\end{equation*}
where $\Gamma$ is a closed contour in the complex plane that encircles $\mu_+$, $\mu_-$, and $\mu^{(D)}$, staying at a finite distance both from them and from the rest of the spectrum. A simple calculation yields
\begin{equation}
P^\perp_\pm\psi_r=-\frac{1}{2\pi i}\oint_{\Gamma} \frac{1}{(\mu^{(D)}-z)(h_\mathrm{DW}-z)} \,dz\,r_r.
\end{equation}
By our choice of the contour we have
\begin{equation*}
|\mu^{(D)}-z|^{-1}\le C,\quad\text{and}\quad \big\|\big( h_\mathrm{DW}-z\big)^{-1}\big\|_\mathrm{op} \le C
\end{equation*}
uniformly for $z\in\Gamma$. Hence, recalling \eqref{eq:norm_r}, we find
\begin{equation}\label{eq:projected}
\begin{split}
\|P^\perp_\pm\psi_r\|_{L^2}\le\;& C_\varepsilon T^{1-\varepsilon}\\
\|P^\perp_\pm\psi_\ell\|_{L^2}\le\;& C_\varepsilon T^{1-\varepsilon}.
\end{split}
\end{equation}
Now, define
\begin{equation}\label{eq:psi +-}
\psi_+:= \frac{\psi_r+\psi_\ell}{\|\psi_r+\psi_\ell\|_{L^2}} \qquad\text{and}\qquad\psi_-:= \frac{\psi_r-\psi_\ell}{\|\psi_r-\psi_\ell\|_{L^2}}.
\end{equation}
We have
\begin{equation*}
\big|\|\psi_r+\psi_\ell\|_{L^2}^2-2\big|\le \big|\|\psi_r\|_{L^2}^2+\|\psi_\ell\|_{L^2}^2-2+2\langle\psi_r,\psi_\ell\rangle\big|\le C_\varepsilon T^{1-\varepsilon},
\end{equation*}
where the last inequality follows from \eqref{eq:norm_psi} and \eqref{eq:estimate_psi_r_psi_l}. Similarly,
\begin{equation*}
\big|\|\psi_r-\psi_\ell\|_{L^2}^2-2\big|\le C_\varepsilon T^{1-\varepsilon}.
\end{equation*}
Hence the norms in the denominators of~\eqref{eq:psi +-} satisfy
\begin{align*}
\left\|\psi_r+\psi_\ell\right\|_{L^2} &= \sqrt{2} + O(T^{1-\eps})\\
\left\|\psi_r-\psi_\ell\right\|_{L^2} &= \sqrt{2} + O(T^{1-\eps})
\end{align*}
and combining with~\eqref{eq:projected} we deduce 
$$ \psi_+ = a u_+ + b u_- + O_{L^2}(T^{1-\eps})$$ 
for complex numbers $a,b$. But, since $\psi_+,u_+$ are even under reflections across $x_1 = 0$ and $u_-$ is odd, this must reduce to 
$$ \psi_+ = u_+ + O_{L^2}(T^{1-\eps}).$$
Similarly 
$$ \psi_- = u_- + O_{L^2}(T^{1-\eps}).$$
These are our vindications of~\eqref{eq:annonce +}-\eqref{eq:annonce -}, as in~\cite{HelSjo-85}.
We deduce from the above that  
$$ |u_+| ^2 - |u_-|^2 = 2 \psi_\ell \psi_r + O_{L^1} (T^{1-\eps}) $$
and~\eqref{eq:L1_convergence} then follows from~\eqref{eq:estimate_psi_r_psi_l}. 

To deduce \eqref{eq:L2_convergence} let us first recall that, if $x_1\ge0$, then $u_-(x)$ is positive by Lemma \ref{lemma:positivity_u-}, and $u_+(x)$ is positive by general arguments. This allows to write
\begin{equation*}
\begin{split}
\int_{\mathbb{R}^d}\big| |u_+|-|u_-| \big|^2\,dx=\;&2\int_{x_1 \ge 0} \big| u_+-u_- \big|^2\,dx \\
\le\;&6\int_{x_1 \ge 0} \big| u_+-\psi_+ \big|^2\,dx +6\int_{x_1 \ge 0} \big| \psi_+-\psi_- \big|^2 \,dx\\
&+6\int_{x_1 \ge 0} \big| \psi_--u_- \big|^2\,dx.
\end{split}
\end{equation*}
The estimates for the first and third summand follow already from what we discussed above. For the second summand we write
\begin{equation*}
\begin{split}
\int_{x_1 \ge 0} \big| \psi_+-\psi_- \big|^2\,dx\le\;& \Big[ \|\psi_r+\psi_\ell\|_{L^2}^{-1}-\|\psi_r-\psi_\ell\|_{L^2}^{-1} \Big]^2 \int_{x_1 \ge 0} |\psi_r|^2\,dx\\
&+\Big[ \|\psi_r+\psi_\ell\|_{L^2}^{-1}+\|\psi_r-\psi_\ell\|_{L^2}^{-1} \Big]^2 \int_{x_1 \ge 0}|\psi_\ell|^2\,dx.
\end{split}
\end{equation*}
The first square bracket in the right hand side is smaller than $C_\varepsilon T^{1-\varepsilon}$ by the estimates above. For the integral of $\psi_\ell$ we write
\begin{equation*}
\begin{split}
\int_{x_1\ge0}|\psi_\ell|^2\,dx \le\;& \int_{x_1 \ge 0}|u_\ell^{(D)}|^2\,dx\\
\le\;& \sup_{0	\le x_1\le L/2-c_\varepsilon} e^{-2(1-\varepsilon)d_\mathrm{DW}(x,-\bx)}\big\| e^{(1-\varepsilon)d_\mathrm{DW}(\,\cdot\,,-\bx)}u_\ell^{(D)}\big\|_{L^2}^2\le C_\varepsilon T^{1-\varepsilon},
\end{split}
\end{equation*}
where the last inequality follows from Lemma \ref{lemma:distance} and Proposition \ref{prop:decay_dirichlet}. This proves \eqref{eq:L2_convergence}.

\subsection{Proof of the $L^\infty$ estimate}

In order to prove the $L^\infty$ proximity in \eqref{eq:Linfty_convergence} we will improve the $L^2$ result \eqref{eq:L2_convergence} to an estimate for the $H^2$ norms. Notice that, in the notations of Proposition \ref{prop:first_convergence}, the $L^2$ convergence \eqref{eq:L2_convergence} implies
\begin{equation}  \label{eq:optimal_localized_L2}
\begin{split}
\big\|\chi_{x_1\ge0}\big(u_+-u_-\big)\big\|_{L^2}^2\le\;& C_\varepsilon T^{1-\varepsilon}\\
\big\|\chi_{x_1\le0}\big( u_++ u_-\big)\big\|_{L^2}^2\le\;& C_\varepsilon  T^{1-\varepsilon},
\end{split}
\end{equation}
which also means that \eqref{eq:l_r_L2_norm_u-} holds with an improved rate. We will improve this result to a higher Sobolev norm.

\begin{proposition}[\textbf{$H^2$ convergence}]\mbox{} \label{prop:H2}
	\begin{align} \label{eq:optimal_H2_r}
	\big\|\chi_{x_1\ge0}\big(u_+-u_-\big)\big\|_{H^2}^2\le\;& C_\varepsilon  T^{1-\varepsilon}\\
	\big\|\chi_{x_1\le0} \big(u_++ u_-\big)\big\|_{H^2}^2\le\;& C_\varepsilon  T^{1-\varepsilon}. \label{eq:optimal_H2_ell}
	\end{align}
\end{proposition}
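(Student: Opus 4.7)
The plan is to upgrade the $L^2$ estimate~\eqref{eq:optimal_localized_L2} to an $H^2$ estimate with the same rate by exploiting the elliptic equations satisfied by $u_+$ and $u_-$, together with the squared-Hamiltonian bound of Lemma~\ref{lemma:delta2}. Since~\eqref{eq:optimal_H2_ell} follows from~\eqref{eq:optimal_H2_r} by reflection symmetry, I focus on the first identity. Setting $v:=u_+-u_-$, $\chi:=\chi_{x_1\ge 0}$, and $f:=\chi v$, the eigenvalue equations $h_{\mathrm{DW}} u_\pm = \mu_\pm u_\pm$ give $h_{\mathrm{DW}} v = \mu_+ v - (\mu_- - \mu_+) u_-$; combined with $[h_{\mathrm{DW}},\chi]=[-\Delta,\chi]$ this yields
\begin{equation*}
h_{\mathrm{DW}} f = \mu_+ f - (\mu_- - \mu_+)\chi u_- - 2\nabla\chi\cdot\nabla v - (\Delta\chi) v.
\end{equation*}
Applying Lemma~\ref{lemma:delta2} to $f$ gives $\|f\|_{H^2}^2 \le C(\|f\|_{L^2}^2 + \|h_{\mathrm{DW}} f\|_{L^2}^2)$, so together with~\eqref{eq:optimal_localized_L2} the proof reduces to showing $\|h_{\mathrm{DW}} f\|_{L^2}^2 \le C_\varepsilon T^{1-\varepsilon}$.

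The first two contributions to $h_{\mathrm{DW}} f$ are immediate: $\|\mu_+ f\|_{L^2}^2 \le C\|f\|_{L^2}^2 \le C_\varepsilon T^{1-\varepsilon}$ by Lemma~\ref{lemma:uniform_mu} and~\eqref{eq:optimal_localized_L2}, while $\|(\mu_--\mu_+)\chi u_-\|_{L^2}^2 \le C_\varepsilon T^{2(1-\varepsilon)}$ by the first-gap bound~\eqref{eq:first_gap}. The substantive step is the control of the two cutoff remainders $2\nabla\chi\cdot\nabla v$ and $(\Delta\chi) v$, both supported in the thin strip $\{-C \le x_1 \le C\}$ that lies in the classically forbidden region. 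For the $v$-factor this follows at once from~\eqref{eq:integral_decay_bound}: on that strip $A_{\mathrm{DW}}(x)\ge (1-o_L(1))A(L/2)$, hence $e^{-2(1-\varepsilon)A_{\mathrm{DW}}(x)}\le T^{1-\varepsilon'}$ uniformly, and trading the weight against~\eqref{eq:integral_decay_bound} yields $\int_{\{-C\le x_1\le C\}}|u_\pm|^2\,dx \le C_\varepsilon T^{1-\varepsilon}$. For the gradient factor I would establish the companion weighted estimate $\int e^{2(1-\varepsilon)A_{\mathrm{DW}}}|\nabla u_\pm|^2\,dx \le C_\varepsilon$, which is not explicit in Proposition~\ref{prop:decay_estimates} but can be derived from it via the identity $e^{\Phi}\nabla u_\pm = \nabla(e^{\Phi} u_\pm) - (\nabla\Phi) e^{\Phi} u_\pm$ and $|\nabla A_{\mathrm{DW}}|^2 = V_{\mathrm{DW}}$, together with a supplementary $V_{\mathrm{DW}}$-weighted Agmon bound $\int V_{\mathrm{DW}}\,e^{2(1-\varepsilon)A_{\mathrm{DW}}}|u_\pm|^2\,dx \le C_\varepsilon$ obtained by rerunning the proof of Proposition~\ref{prop:decay_estimates} with the weight $\Phi=(1-\varepsilon/2)A_{\mathrm{DW}}$, for which $W-|\nabla\Phi|^2$ controls a full $\tfrac{\varepsilon}{2} V_{\mathrm{DW}}$ outside a compact set.

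The main technical obstacle is precisely this weighted gradient bound on the eigenfunctions of $h_{\mathrm{DW}}$. Once it is in place, each of the four pieces of $h_{\mathrm{DW}} f$ has $L^2$-norm at most $C_\varepsilon T^{(1-\varepsilon)/2}$, so squaring produces $\|h_{\mathrm{DW}} f\|_{L^2}^2\le C_\varepsilon T^{1-\varepsilon}$ and Lemma~\ref{lemma:delta2} closes the argument, establishing~\eqref{eq:optimal_H2_r}. The second identity~\eqref{eq:optimal_H2_ell} then follows from the same computation with $\chi_{x_1\ge 0}$ replaced by $\chi_{x_1\le 0}$ and $u_-$ by $-u_-$, upon invoking the odd parity of $u_-$ from Lemma~\ref{lemma:positivity_u-}.
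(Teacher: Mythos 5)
Your proof is correct and follows essentially the same strategy as the paper's: bound the $H^2$ norm via Lemma~\ref{lemma:delta2}, expand $h_{\mathrm{DW}}\big(\chi_{x_1\ge 0}(u_+-u_-)\big)$ through the commutator $[h_{\mathrm{DW}},\chi_{x_1\ge 0}]$, and control the resulting terms using the gap upper bound~\eqref{eq:first_gap}, the improved $L^2$ estimate~\eqref{eq:optimal_localized_L2}, and localization estimates on the cutoff strip. Your rewriting $h_{\mathrm{DW}}(u_+-u_-) = \mu_+(u_+-u_-) - (\mu_--\mu_+)u_-$ is in fact slightly cleaner than the paper's, which instead expands $\|\chi_{x_1\ge 0}(\mu_+u_+-\mu_-u_-)\|_{L^2}^2$ directly and invokes $-2\langle\chi_{x_1\ge 0}u_+,\chi_{x_1\ge 0}u_-\rangle\le -1 + C_\varepsilon T^{1-\varepsilon}$ to extract the factor $(\mu_--\mu_+)^2$; both are two-line reductions and land in the same place.

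One remark: the ``main technical obstacle'' you flag, namely the gradient localization
\[
\int_{\{-C\le x_1\le C\}}|\nabla u_\pm|^2\,dx\le C_\varepsilon T^{1-\varepsilon},
\]
is not actually an obstacle — it is~\eqref{eq:localized_gradients}, already recorded in Proposition~\ref{prop:localized_bounds} of Appendix~\ref{sect:app}, and the paper's Lemma~\ref{lemma:commut} cites it for precisely this purpose. Your sketch (pass from $\nabla(e^\Phi u)$ to $e^\Phi\nabla u$ using $|\nabla A_{\mathrm{DW}}|^2=V_{\mathrm{DW}}$, then rerun Agmon with a slightly larger weight to absorb the $V_{\mathrm{DW}}$ factor) is a sound way to rederive it, but you could have spared yourself the detour.
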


The $L^\infty$ estimate \eqref{eq:Linfty_convergence} is an immediate consequence of Proposition \ref{prop:H2} thanks to the Sobolev embedding
\begin{equation*}
\|f\|_{L^\infty(\mathbb{R}^d)}\le C \|f\|_{H^2(\mathbb{R}^d)},
\end{equation*}
that holds for $d=1,2,3$.
In order to prove Proposition \ref{prop:H2} we start with a Lemma.

\begin{lemma}[\textbf{Commuting $h_{\mathrm{DW}}$ and $\chi_{x_1\ge0}$}]\mbox{} \label{lemma:commut}
	\begin{align*}
	\big\|h_{\mathrm{DW}}\chi_{x_1\ge0}(u_+-u_-)\big\|_{L^2}^2 \le\big\|\chi_{x_1\ge0} h_\mathrm{DW}(u_+-u_-)\big\|_{L^2}^2+C_\varepsilon T^{1-\varepsilon}.
	\end{align*}
\end{lemma}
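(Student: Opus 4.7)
The plan is to write $h_{\mathrm{DW}}\chi_{x_1\ge 0}(u_+-u_-) = \chi_{x_1\ge 0}h_{\mathrm{DW}}(u_+-u_-) + R$ where the remainder is the commutator
$$R := [h_{\mathrm{DW}},\chi_{x_1\ge 0}](u_+-u_-) = -[\Delta,\chi_{x_1\ge 0}](u_+-u_-) = -(\Delta\chi_{x_1\ge 0})(u_+-u_-) - 2\nabla\chi_{x_1\ge 0}\cdot\nabla(u_+-u_-),$$
since $\chi_{x_1\ge 0}$ commutes with multiplication by $V_{\mathrm{DW}}$ and by $\lambda w*|u_+|^2$. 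The key structural feature is that $\nabla\chi_{x_1\ge 0}$ and $\Delta\chi_{x_1\ge 0}$ are supported in a strip $S := \{|x_1|\le C\}$ which is precisely the classically forbidden region halfway between the two wells, where $u_\pm$ are exponentially small.

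Expanding the square,
$$\big\|h_{\mathrm{DW}}\chi_{x_1\ge 0}(u_+-u_-)\big\|_{L^2}^2 = \big\|\chi_{x_1\ge 0}h_{\mathrm{DW}}(u_+-u_-)\big\|_{L^2}^2 + 2\,\mathrm{Re}\,\big\langle \chi_{x_1\ge 0}h_{\mathrm{DW}}(u_+-u_-),R\big\rangle + \|R\|_{L^2}^2,$$
so it suffices to show that both the cross term and $\|R\|_{L^2}^2$ are $O_\varepsilon(T^{1-\varepsilon})$. Both quantities are integrals supported in $S$. By the integral Agmon estimate \eqref{eq:integral_decay_bound} (and its gradient counterpart, obtained by expanding $\nabla(e^{(1-\varepsilon)A_{\mathrm{DW}}}u_j)$ and using $|\nabla A_{\mathrm{DW}}|^2\le V_{\mathrm{DW}}$), together with the trivial bound $\inf_{x\in S}A_{\mathrm{DW}}(x) = A(L/2-C)$, one gets
$$\int_S |u_\pm|^2 + \int_S |\nabla u_\pm|^2 \le C_\varepsilon\, e^{-2(1-\varepsilon)A(L/2-C)} \le C_{\varepsilon'}\,T^{1-\varepsilon'},$$
where the last step uses that $A(L/2-C)/A(L/2) = 1 - O(1/L)$ so the loss $L^{s/2}$ is absorbed by any additional $\varepsilon'-\varepsilon$ in the exponent. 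This immediately yields $\|R\|_{L^2}^2 \le C_\varepsilon T^{1-\varepsilon}$.

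For the cross term I would use that $\chi_{x_1\ge 0}h_{\mathrm{DW}}(u_+-u_-) = \chi_{x_1\ge 0}(\mu_+ u_+ - \mu_- u_-)$ by the eigenvalue equations, so (using $\mu_\pm\le C$ from Lemma \ref{lemma:uniform_mu})
$$\int_S |\chi_{x_1\ge 0}h_{\mathrm{DW}}(u_+-u_-)|^2 \le C\int_S(|u_+|^2+|u_-|^2) \le C_\varepsilon T^{1-\varepsilon}.$$
Since $R$ is supported in $S$, Cauchy--Schwarz restricted to $S$ gives
$$\big|\big\langle \chi_{x_1\ge 0}h_{\mathrm{DW}}(u_+-u_-),R\big\rangle\big| \le \Big(\int_S|\chi_{x_1\ge 0}h_{\mathrm{DW}}(u_+-u_-)|^2\Big)^{1/2}\|R\|_{L^2} \le C_\varepsilon T^{1-\varepsilon}.$$
The main subtlety is exactly this step: naively bounding the cross term by $\|\chi h_{\mathrm{DW}}(u_+-u_-)\|_{L^2}\|R\|_{L^2}$ would only give the suboptimal rate $T^{(1-\varepsilon)/2}$, because the first factor is of order one. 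It is essential to exploit that $R$ is supported in the strip $S$ so that the first factor can also be restricted there and hence also inherits the tunneling smallness. Combining the three bounds yields the lemma.
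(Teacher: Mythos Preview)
Your proof is correct and follows essentially the same approach as the paper: expand the square via $h_{\mathrm{DW}}\chi = \chi h_{\mathrm{DW}} + [h_{\mathrm{DW}},\chi]$, observe that the commutator is supported in the strip $\{|x_1|\le C\}$, and use the Agmon decay estimates \eqref{eq:localized_without_anything}, \eqref{eq:localized_gradients} together with the eigenvalue equations $h_{\mathrm{DW}}u_\pm=\mu_\pm u_\pm$ to bound both $\|R\|_{L^2}^2$ and the cross term. The paper organizes the algebra slightly differently (it expands the cross term once more so that the commutator appears with a coefficient $3$, and then bounds the resulting inner product directly rather than via Cauchy--Schwarz restricted to the strip), but the substance is identical; your remark that localizing the Cauchy--Schwarz to the support of $R$ is what avoids the suboptimal $T^{(1-\varepsilon)/2}$ rate is exactly the point.
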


\begin{proof}
	We have
	\begin{equation*}
	\begin{split}
	\big\langle \chi_{x_1\ge0}(u_+-u_-),h^2_{\mathrm{DW}}\chi_{x_1\ge0}(u_+-u_-)\big\rangle=\;&\big\langle\chi_{x_1\ge0} h_\mathrm{DW}(u_+-u_-),\chi_{x_1\ge0} h_\mathrm{DW}(u_+-u_-)\big\rangle\\
	&+\big\langle\big[ h_\mathrm{DW},\chi_{x_1\ge0}\big](u_+-u_-),\big[ h_\mathrm{DW},\chi_{x_1\ge0}\big](u_+-u_-)\big\rangle\\
	&+2\big\langle\big[ h_\mathrm{DW},\chi_{x_1\ge0}\big](u_+-u_-), h_\mathrm{DW}\chi_{x_1\ge0}(u_+-u_-)\big\rangle\\
	=\;&\big\langle\chi_{x_1\ge0} h_\mathrm{DW}(u_+-u_-),\chi_{x_1\ge0} h_\mathrm{DW}(u_+-u_-)\big\rangle\\
	&+3\big\langle\big[ h_\mathrm{DW},\chi_{x_1\ge0}\big](u_+-u_-),\big[ h_\mathrm{DW},\chi_{x_1\ge0}\big](u_+-u_-)\big\rangle\\
	&+2\big\langle\big[ h_\mathrm{DW},\chi_{x_1\ge0}\big](u_+-u_-), \chi_{x_1\ge0}(\mu_+u_+-\mu_-u_-)\big\rangle.
	\end{split}
	\end{equation*}
	We then have to estimate
	\begin{equation*}
	\begin{split}
	\mathrm{Err}_1=\;&3\big\langle\big[ h_\mathrm{DW},\chi_{x_1\ge0}\big](u_+-u_-),\big[ h_\mathrm{DW},\chi_{x_1\ge0}\big](u_+-u_-)\big\rangle\\
	\mathrm{Err}_2=\;&2\big\langle\big[ h_\mathrm{DW},\chi_{x_1\ge0}\big](u_+-u_-), \chi_{x_1\ge0}(\mu_+u_+-\mu_-u_-)\big\rangle.
	\end{split}
	\end{equation*}
	Since
	\begin{equation*}
	\big[h_\mathrm{DW},\chi_{x_1\ge0}\big]=\Delta\chi_{x_1\ge0}+2(\nabla\chi_{x_1\ge0})\cdot\nabla,
	\end{equation*}
	we deduce, using \eqref{eq:localized_without_anything} and \eqref{eq:localized_gradients},
	\begin{equation*}
	\begin{split}
	|\mathrm{Err}_1|\le\;& \int\big|(\Delta\chi_{x_1\ge0}+2(\nabla\chi_{x_1\ge0})\cdot \nabla)(u_+-u_-)\big|^2\\
	\le\;&2\int|\Delta\chi_{x_1\ge0}|^2\,|u_+-u_-|^2\\
	&+4\int|\nabla\chi_{x_1\ge0}|^2\,|\nabla(u_+-u_-)|^2\\
	\le\;&C_\varepsilon T^{1-\varepsilon}.
	\end{split}
	\end{equation*}
	The estimate of $\mathrm{Err}_2$ is similar, and this completes the proof.
\end{proof}

\begin{proof}[Proof of Proposition \ref{prop:H2}]
	
	Using Lemma \ref{lemma:delta2} and then Lemma \ref{lemma:commut} we have
	\begin{equation} \label{eq:first_H2_estimate}
	\begin{split}
	\big\|\Delta\chi_{x_1\ge0}(u_+-u_-)\big\|_{L^2}^2\le\;&
	\big \|h_\mathrm{DW}\chi_{x_1\ge0}(u_+-u_-)\big\|_{L^2}^2+C\big\|\chi_{x_1\ge0}(u_+-u_-)\big\|_{L^2}^2\\
	\le\;&\big\| \chi_{x_1\ge0}h_\mathrm{DW}(u_+-u_-)\big\|_{L^2}^2 +C\big\|\chi_{x_1\ge0}(u_+-u_-)\big\|_{L^2}^2+C_\varepsilon T^{1-\varepsilon}\\
	=\;&\big\| \chi_{x_1\ge0}(\mu_+u_+-\mu_-u_-)\big\|_{L^2}^2 +C\big\|\chi_{x_1\ge0}(u_+-u_-)\big\|_{L^2}^2+C_\varepsilon  T^{1-\varepsilon}.
	\end{split}
	\end{equation}
	The norm $\|\chi_{x_1\ge0}(u_+-u_-)\|_{L^2}^2$ was already estimated in \eqref{eq:optimal_localized_L2}. To estimate the first term in the right hand side, we expand
	\begin{equation*}
	\big\| \chi_{x_1\ge0}(\mu_+u_+-\mu_-u_-)\big\|_{L^2}^2=\frac{\mu_++\mu_-}{2}-2\mu_+\mu_-\langle \chi_{x_1\ge0} u_+,\chi_{x_1\ge0} u_-\rangle.
	\end{equation*}
	Since \eqref{eq:optimal_localized_L2} implies
	\begin{equation*}
	-2\langle\chi_{x_1\ge0} u_+,\chi_{x_1\ge0} u_-\rangle \le -1+C_\varepsilon T^{1-\varepsilon},
	\end{equation*}
	we deduce
	\begin{equation*}
	\begin{split}
	\big\| \chi_{x_1\ge0}(\mu_+u_+-\mu_-u_-)\big\|_{L^2}^2 \le\;& \frac{\mu_++\mu_-}{2}-\mu_+\mu_-+C_\varepsilon  T^{1-\varepsilon}\\
	=\;&\frac{1}{2}(\mu_--\mu_+)^2+C_\varepsilon T^{1-\varepsilon}\\
	\le \;&C_\varepsilon  T^{1-\varepsilon},
	\end{split}
	\end{equation*}
	where the last step follows from the upper bound in \eqref{eq:first_gap}. This proves \eqref{eq:optimal_H2_r}. A reflection across the $\{x_1=0\}$ hyperplane sends $\chi_{x_1\ge0}(u_+-u_-)$ into $\chi_{x_1\le0}(u_++u_-)$ and thus \eqref{eq:optimal_H2_ell} also follows.
\end{proof}

\appendix

\section{Tunneling terms}\label{sect:app}

Here we deduce a variety of useful bounds from the decay estimates of Section~\ref{sec:decay}:

\begin{proposition}[\textbf{Bounds on tunneling terms}]\mbox{}\\  \label{prop:localized_bounds}
	Let $R\ge0$ be a fixed number. For any $\varepsilon>0$ there exist $c_\varepsilon, C_\varepsilon$ such that, for $L$ large enough,
	\begin{align} \label{eq:scalar_product}
	\int_{\mathbb{R}^d} u_\ell u_r\,dx\le\;& C_\varepsilon T^{1-\varepsilon} \\
	\int_{-R \le x_1\le R } |u_\pm(x)|^2\,dx\le\;&C_\varepsilon T^{1-\varepsilon}  \label{eq:localized_without_anything}\\
	\int_{-R \le x_1\le R } |u_\mathrm{ex}(x)|^2\,dx\le\;&C_\varepsilon T^{1-\varepsilon}  \label{eq:localized_without_anything_ex}\\
	\label{eq:nabla_nabla_term}
	\int_{\mathbb{R}^d} \nabla u_\ell \cdot\nabla u_r\,dx \le\;& C_\varepsilon T^{1-\varepsilon}\\
	\int_{-R \le x_1\le R }|\nabla u_\pm(x)|^2\,dx\le\;&C_\varepsilon T^{1-\varepsilon}  \label{eq:localized_gradients}\\
	\int_{-R \le x_1\le R }|\nabla u_\mathrm{ex}(x)|^2\,dx \le\;&C_\varepsilon T^{1-\varepsilon} \label{eq:localized_gradients_ex}\\
	\int_{\mathbb{R}^d} V_\mathrm{DW}u_\ell u_r\,dx \le\;&C_\varepsilon T^{1-\varepsilon} \label{eq:V_l_r_term} \\
	\label{eq:lr_on_rl}
	\int_{x_1\ge -R}|u_\ell(x)|^2dx=\int_{x_1\le R}|u_r(x)|^2\,dx\le\;& C_\varepsilon T^{1-\varepsilon}\\
	\begin{split}
	\int_{-R\le x_1\le R}|u_\pm(x)|^2(V_r(x)-V_\mathrm{DW}(x))\,dx=\;&\int_{-R\le x_1\le R}|u_\pm|^2(V_\ell(x)-V_\mathrm{DW}(x))\,dx\\\le\;& C_\varepsilon T^{1-\varepsilon}\end{split}  \label{eq:V_DW_to_V}\\
	\int_{-R\le x_1\le R}|u_\mathrm{ex}(x)|^2(V_r(x)-V_\mathrm{DW}(x))\,dx\le\;& C_\varepsilon T^{1-\varepsilon}  \label{eq:V_DW_to_V_r_ex}\\
	\int_{-R\le x_1\le R}V_\mathrm{DW}(x)|u_+(x)|^2\,dx\le\;&C_\varepsilon T^{1-\varepsilon}   \label{eq:chichi_V_+}\\
	\int_{x_1\ge -R} V_r|u_\ell|^2\,dx=\int_{x_1\le R} V_\ell|u_r|^2\,dx\le\;&C_\varepsilon T^{1-\varepsilon} \label{eq:lr_on_rl_V}\\
	\int_{-R\le x_1\le R}|u_+(x)|^2\big(w*|u_+|^2(x)\big)\,dx\le\;&C_\varepsilon T^{1-\varepsilon}   \label{eq:chichi_w_+}\\
	\label{eq:convol_two_sides}
	\iint_{\mathbb{R}^d\times\mathbb{R}^d}w(x-y)|u_\ell(x)|^2|u_r(y)|^2\,dxdy\le\;& C_\varepsilon T^{1-\varepsilon}.
	\end{align}
	
\end{proposition}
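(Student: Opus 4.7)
The plan is to deduce every inequality of Proposition~\ref{prop:localized_bounds} from Proposition~\ref{prop:decay_estimates} by standard manipulations, splitting the list into two families that share a common structure, together with a routine ``absorption'' device that lets us trade polynomial or logarithmic losses in $L$ against an arbitrarily small increase of the exponent~$\varepsilon$.

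The first family consists of integrals of a single mode over a strip of bounded width about $\{x_1=0\}$, possibly weighted by a polynomial factor (items \eqref{eq:localized_without_anything}--\eqref{eq:localized_without_anything_ex}, \eqref{eq:localized_gradients}--\eqref{eq:localized_gradients_ex}, \eqref{eq:V_DW_to_V}--\eqref{eq:chichi_w_+}, and \eqref{eq:lr_on_rl}--\eqref{eq:lr_on_rl_V}). The template will be: multiply and divide the integrand by $e^{2(1-\varepsilon)A_{\mathrm{DW}}}$ (or by $e^{2(1-\varepsilon)A(x\pm\bx)}$ when the mode is $u_r$ or $u_\ell$), apply the integral decay bound~\eqref{eq:integral_decay_bound} to control the weighted $L^2$ factor by $C_\varepsilon$, and pull out of the integral the supremum of $e^{-2(1-\varepsilon)A_{\mathrm{DW}}}$ on the localisation set. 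On $\{-R\le x_1\le R\}$ that supremum equals $e^{-2(1-\varepsilon)A(L/2-R)}$, and the elementary identity
\[
\frac{A(L/2-R)}{A(L/2)} = \Bigl(1-\frac{2R}{L}\Bigr)^{1+s/2} \underset{L\to\infty}{\longrightarrow} 1
\]
then allows me to estimate it by $T^{1-\varepsilon'}$ for any $\varepsilon'>\varepsilon$ once $L$ is large enough; relabelling $\varepsilon'\to\varepsilon$ yields the claimed rate. Polynomial prefactors such as $V_{\mathrm{DW}}$, $V_r-V_{\mathrm{DW}}$, $|\nabla\chi|^2$, or $w*|u_+|^2$ grow strictly slower than $A_{\mathrm{DW}}$ and are absorbed into the exponential by the same device. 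The gradient items \eqref{eq:localized_gradients}, \eqref{eq:localized_gradients_ex} (and \eqref{eq:nabla_nabla_term}) additionally require commuting $\nabla$ through the weight, producing a cross-term $|\nabla A_{\mathrm{DW}}|^2|u|^2 = V_{\mathrm{DW}}|u|^2$ that is handled by the same trick.

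The second family consists of genuinely tensorial objects mixing $u_\ell$ and $u_r$: \eqref{eq:scalar_product}, \eqref{eq:nabla_nabla_term}, \eqref{eq:V_l_r_term}, and the two-body integral \eqref{eq:convol_two_sides}. I will apply the pointwise decay estimates \eqref{eq:decay_u_r}--\eqref{eq:decay_u_ell} to \emph{both} factors simultaneously, obtaining an integrand controlled by $e^{-(1-\varepsilon)[A(|x-\bx|)+A(|x+\bx|)]}$ times a (possibly polynomial) weight; contributions from the two fixed balls of radius $R$ around $\pm\bx$, where the pointwise bounds do not yet apply, are negligible because there the ``far'' mode is controlled by the strictly better decay $e^{-(1-\varepsilon)A(L-R)}$ and the ``near'' mode by the uniform $L^\infty$-bound of Lemma~\ref{lemma:uniform_Sobolev}. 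The key input is the convexity/optimality inequality
\[
A(|x-\bx|) + A(|x+\bx|) \;\ge\; 2 A\bigl(\tfrac{L}{2}\bigr),
\]
which follows from $|x-\bx|+|x+\bx|\ge|2\bx|=L$ and the convexity and monotonicity of $r\mapsto r^{1+s/2}$. It extracts the factor $T^{1-\varepsilon}$; the residual integrand $e^{-(1-\varepsilon)\Phi(x)}$ with $\Phi(x):=A(|x-\bx|)+A(|x+\bx|)-2A(L/2)\ge 0$ is integrable uniformly in $L$, since $\Phi$ vanishes at $x=0$ with a Hessian of order $L^{s/2-1}$, so $\int e^{-(1-\varepsilon)\Phi}\,dx = O(1)$ (and in fact $o(1)$). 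The convolution bound \eqref{eq:convol_two_sides} is reduced to the same picture via the compact support of $w$: on $\mathrm{supp}\,w(x-\cdot)$ one has $|y-\bx|\ge|x-\bx|-\rho$ with $\rho=\mathrm{diam}(\mathrm{supp}\,w)$, so convexity produces $2A((L-\rho)/2)$ in place of $2A(L/2)$ and the asymptotic-absorption step handles the gap.

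The main technical point of the plan is precisely the extraction of the sharp rate $T^{1-\varepsilon}$ for the tensor-product integrals. A naive Cauchy--Schwarz using only the $L^2$-normalization of $u_r$ together with a single pointwise bound on $u_\ell$ loses a square root and yields only $T^{1/2-\varepsilon}$, which would propagate as a lossy factor throughout the downstream estimates of the paper. The correct rate requires using the pointwise decay of both modes at once and invoking the convexity inequality above, which encodes the fact that the two wells ``meet'' on the hyperplane $\{x_1=0\}$ precisely at the point where each mode has already decayed by the full single-well factor $e^{-(1-\varepsilon)A(L/2)}$. Once this structural observation is in place, the remaining ingredients---the asymptotic $A(L/2-R)/A(L/2)\to 1$, the Hessian estimate for $\Phi$, and the absorption of polynomial weights---are routine, and all fourteen inequalities of Proposition~\ref{prop:localized_bounds} follow along these lines.
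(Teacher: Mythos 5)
Your proposal follows essentially the same strategy as the paper: use the decay estimates of Proposition~\ref{prop:decay_estimates}, combined with the convexity lower bound $A(|x-\bx|)+A(|x+\bx|)\ge 2A(L/2)$, to reduce everything to integrals of exponentially decaying quantities, and absorb polynomial losses into $T^{-\varepsilon}$. The paper quotes \cite[Proposition 3.3]{RouSpe-16} for the archetypal integral $I_a=\int e^{-a|x-\bx|^{1+s/2}-a|x+\bx|^{1+s/2}}\,dx$, whereas you re-derive it on the fly via the Hessian estimate $\Phi(x)\gtrsim L^{s/2-1}|x|^2$ near the saddle $x=0$; these are the same ingredient. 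Your decomposition into a ``strip'' family and a ``tensorial'' family and the uniform absorption $A(L/2-R)/A(L/2)\to1$ are also exactly what the paper does.

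The one place where your plan is underspecified is \eqref{eq:nabla_nabla_term}. You assign it to both families, but the strip argument does not apply (the domain of integration is all of $\R^d$ and the integrand mixes $u_\ell$ and $u_r$), and the tensorial argument requires weighted control on $\nabla u_\ell$ and $\nabla u_r$, i.e.\ an estimate of the form $\int e^{2(1-\eps)A(\cdot\mp\bx)}|\nabla u_{r/\ell}|^2\le C_\eps$. Proposition~\ref{prop:decay_estimates} states such a weighted gradient bound only for eigenvectors $u_j$ of $h_\mathrm{DW}$ (the first part of \eqref{eq:integral_decay_bound}), while for the one-well modes $u_r,u_\ell$ only the pointwise bound on the function itself is recorded. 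You would need to observe explicitly that the Agmon argument of Lemma~\ref{lemma:first_agmon_lemma}, applied to $h_r$ (respectively $h_\ell$) and $\Phi=(1-\eps)A(\cdot-\bx)$, delivers the same weighted $H^1$ control on $u_r$. The paper sidesteps this entirely: it integrates by parts, $\int\nabla u_\ell\cdot\nabla u_r=-\int u_\ell\,\Delta u_r$, and then uses the eigenvalue equation $\Delta u_r=(V_r+\lambda w*|u_r|^2-\mu_r)u_r$ to reduce \eqref{eq:nabla_nabla_term} to a polynomially weighted version of \eqref{eq:scalar_product}, which is then handled by the very same $I_a$ estimate. This is cleaner and avoids invoking any gradient decay for the single-well modes; if you keep your route, you should at least note the extension of Lemma~\ref{lemma:first_agmon_lemma} to $u_r,u_\ell$.
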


\begin{proof}
	\eqref{eq:scalar_product} was already proven in \cite[Proposition (3.3)]{RouSpe-16}. The main point is to use the upper bounds in \eqref{eq:decay_u_r} and \eqref{eq:decay_u_ell} in order to reduce the aim to estimating an integral of the form
	\begin{equation} \label{eq:I_a}
	I_a=\int_{\mathbb{R}^d}e^{-a|x-\bx|^{1+s/2}-a|x-\bx|^{1+s/2}}dx
	\end{equation}
	with $a=(1+s/2)^{-1}-\varepsilon$, the $\varepsilon$ being used to absorb any polynomial correction due to $V$. As said, the estimate of $I_a$ can then be found in \cite[Proposition (3.3)]{RouSpe-16}. The integrals in \eqref{eq:localized_without_anything} and \eqref{eq:localized_without_anything_ex} can in the same way be bounded by an integral of the type $I_a$.
	
	To prove \eqref{eq:nabla_nabla_term} we write
	\begin{equation*}
	\begin{split}
	\int_{\mathbb{R}^d} \nabla u_\ell \nabla u_r\,dx=-\int_{\mathbb{R}^d}u_\ell\Delta u_r\,dx=\int_{\mathbb{R}^d}u_\ell\big(\mu_r-V_r-\lambda w*|u_r|^2\big)u_r\,dx.
	\end{split}
	\end{equation*}
	We can then reduce ourselves to an integral of the form \eqref{eq:I_a} by slightly changing the value of $\varepsilon$ in order to absorb the corrections coming from $V_r$ and $\lambda w*|u_r|^2$. The same holds for every other term from \eqref{eq:localized_gradients} to \eqref{eq:chichi_w_+}.
	
	To prove \eqref{eq:convol_two_sides} we use the fact that $w$ is bounded with compact support to write
	\begin{equation*} 
	\begin{split}
	\iint_{\mathbb{R}^d\times\mathbb{R}^d}w(x-y)|u_\ell(x)|^2|u_r(y)|^2dxdy \le\;& C\iint_{\{|x-y|\le C\}}|u_\ell(x)|^2 |u_r(y)|^2dxdy\\
	=\;& C\iint_{\{|x-y|\le C\}\cap\{x_1\le 0\}}|u_\ell(x)|^2 |u_r(y)|^2dxdy\\
	&+C\iint_{\{|x-y|\le C\}\cap\{x_1\ge 0\}}|u_\ell(x)|^2 |u_r(y)|^2dxdy.
	\end{split}
	\end{equation*}
	The second summand is bounded by $C\int_{x_1\ge0} |u_\ell|^2$, and hence a bound for it follows from \eqref{eq:lr_on_rl}. For the first summand we write
	\begin{equation*}
	\iint_{\{|x-y|\le C\}\cap\{x_1\le 0\}}|u_\ell(x)|^2 |u_r(y)|^2dxdy\le \iint_{\{|x-y|\le C\}\cap\{x_1\le 0\}\cap\{y_1\le C\}}|u_\ell(x)|^2 |u_r(y)|^2dxdy.
	\end{equation*}
	The right hand side is estimate by $C \int_{x_1\le0} |u_r|^2$, for which we can again use \eqref{eq:lr_on_rl}
\end{proof}

\section{Higher spectrum}\label{app:higher}

Most of the proof of Theorem \ref{thm:higher} consists of suitable adaptations of the arguments already used to derive Theorem \ref{thm:main}. We only discuss the modifications due to the fact that higher eigenvalues of the left and right wells are typically degenerate (see \cite{Helffer-88,DimSjo-99} and references therein for more general situations). We do not pursue convergence rates nor the dependence on $k$ (i.e. we only deal with energies $O(1)$ apart from the ground state when $L\to \infty$).

The case $k=1$ of Theorem \ref{thm:higher} follows from the proof of Theorem \ref{thm:main}). We proceed by induction and sketch the proof that the same statements holds for $k$ provided we know it does for all smaller integers. The necessary bounds on the decay of eigenfunctions are similar to what was discussed previously and shall not be reproduced.
 
Using the induction hypothesis one can construct a trial state 
$$ 
\frac{u^\ell_{k} + u^r_{k} }{\sqrt{2}} + \mbox{ correction },
$$ 
where $u^\ell_{k}$ is an eigenfunction corresponding to the $k$-th eigenvalue of $h^{\ell}$ and $u^r_{k}$ its reflection. The correction term can be made very small, and renders the above orthogonal to all eigenfunctions of $\hDW$ corresponding to eigenvalues below $\mu_{2M(k-1) + 1}$. Evaluating the energy of the trial state using decay estimates as in the main text yields 
$$ \mu_{2M(k-1) + 1} \leq \mu^\ell_k + o (1).$$ 
 Localizing the corresponding eigenfunction in the left and right wells as in the main text one deduces that $ u_{2M(k-1) + 1}$ converges in the left well to $U^\ell_{k}$, an eigenfunction corresponding to the $k$-th eigenvalue of $h^{\ell}$. In view of the symmetry of the double-well Hamiltonian we are at liberty to assume that $ u_{2M(k-1) + 1}$ is even or odd with respect to reflections across the potential barrier. Hence  $ u_{2M(k-1) + 1}$ converges in the right well to $\pm$ the reflection of $U^\ell_{k}$. We assume it converges to $U^\ell_{k}$, with obvious modifications in the sequel if it is the other way around.
 
Next one can construct a trial state of the form  
$$ 
\frac{U^\ell_{k} - U^r_{k} }{\sqrt{2}} + \mbox{ correction },
$$ 
where $U^\ell_{k}$ is the eigenfunction  $h^{\ell}$ that $u_{2M(k-1) + 1}$ converges to in the left well and $U^r_{k}$ is its reflection. As above this yields an upper bound to  $\mu_{2M(k-1) + 2}$ and allows to deduce that a corresponding eigenfunction must converge  in the left well to some $\widetilde{u}^\ell_{k}$, an eigenfunction corresponding to the $k$-th eigenvalue of $h^{\ell}$. Using the symmetry with respect to reflections and what has just been proved for $ u_{2M(k-1) + 1}$ we deduce that either $\widetilde{u}^\ell_{k} \perp U^\ell_{k}$ and $ u_{2M(k-1) + 2}$ converges to $\pm$ its reflection in the right well or  $\widetilde{u}^\ell_{k} = U^\ell_{k}$ and $ u_{2M(k-1) + 2}$ converges to $-$ its reflection in the right well. 

Assume the latter situation occurs, again with appropriate modifications to the sequel if it is the former. One can then bound $\mu_{2M(k-1) + 3}$ from above with a trial state of the form 
$$ 
\frac{v^\ell_{k} + v^r_{k} }{\sqrt{2}} + \mbox{ correction },
$$ 
where $v^\ell_{k}$ is an eigenfunction  of $h^{\ell}$ orthogonal to the $U^\ell_{k}$ just found, and $v^r_{k} $ its reflection. Similarly as above we deduce that  $u_{2M(k-1) + 3}$ must converge to some $V^\ell_{k}$ in the left well and to $\pm$ its reflection in the right well, with $V^\ell_{k} \perp U^{\ell}_k$ an eigenfunction  for the $k$-th eigenvalue of $h^{\ell}$. The process can then be repeated inductively, and the claimed results follow. The induction stops when all possibilities of constructing mutually orthogonal even/odd trial states for the double-well Hamiltonian out of the eigenstates of the single wells have been exhausted, that is after $m(k)$ iterations. One thus obtains Items (i) and (iii) of the Theorem by arguments similar to the main text. Item (ii) then follows as in Section \ref{sec:second gap} by localizing the next eigenfunction of $\hDW$ in the left/right wells. Since said eigenfunction must be orthogonal to all lower eigenfunctions of $\hDW$, we deduce from what has been proved so far that the localizations must to leading order be orthogonal to the subspace of the single wells Hamiltonians corresponding to all eigenvalues up to the $k$-th one. The claimed energy gap follows. 
%

%

\end{document}